\documentclass[12pt,reqno,a4paper]{article}
\usepackage{mathrsfs}
\usepackage{amsmath}
\usepackage{amsthm}
\usepackage{amsfonts}
\usepackage{amssymb}
\usepackage[dvips]{graphicx}
\usepackage[dvips]{color}
\usepackage{latexsym}
\usepackage{enumerate}
\usepackage{xspace}
\usepackage{bm}
\bibliographystyle{plain}

\voffset=-17mm \hoffset=-17mm \setlength{\textwidth}{163 true mm}
\setlength{\textheight}{239 true mm}
\setlength{\emergencystretch}{2em}
\newtheorem{thm}{Theorem}
\newtheorem{theorem}[thm]{Theorem}

\newtheorem{lemma}[thm]{Lemma}
\newtheorem{proposition}[thm]{Proposition}
\newtheorem{definition}[thm]{Definition}

\begin{document}
\title{\bf Sparse Representer Theorems for Learning in Reproducing Kernel Banach Spaces}
\author{Rui Wang\thanks{School of Mathematics, Jilin University, Changchun 130012, P. R. China. E-mail address: {\it rwang11@jlu.edu.cn}. }, \quad  \ Yuesheng Xu\thanks{
Department of Mathematics and Statistics, Old Dominion University, Norfolk, VA 23529, USA. This author is also a Professor Emeritus of Mathematics, Syracuse University, Syracuse, NY 13244, USA. E-mail address: {\it y1xu@odu.edu.} All correspondence should be sent to this author.}\quad and \ Mingsong Yan \thanks{ Department of Mathematics and Statistics, Old Dominion University, Norfolk, VA 23529, USA.  E-mail address: {\it myan007@odu.edu}.} }

\date{}

\maketitle{}

\begin{abstract}
Sparsity of a learning solution is a desirable feature in machine learning. Certain reproducing kernel Banach spaces (RKBSs) are appropriate hypothesis spaces for sparse learning methods. The goal of this paper is to understand what kind of RKBSs can promote sparsity for learning solutions. 
We consider two typical learning models in an RKBS: the minimum norm interpolation (MNI) problem and the regularization problem. 
We first establish an explicit representer theorem for solutions of these problems, which represents the extreme points of the solution set by a linear combination of the extreme points of the subdifferential set, of the norm function, which is \textit{data-dependent}. 
We then propose sufficient conditions on the RKBS that can transform the explicit representation of the solutions to a sparse kernel representation having fewer terms than the number of the observed data. Under the proposed sufficient conditions, we investigate the role of the regularization parameter on sparsity of the regularized solutions. We further show that two specific RKBSs, the sequence space $\ell_1(\mathbb{N})$ and the measure space, can have sparse representer theorems for both MNI and regularization models.
\end{abstract}

\textbf{Key words}:
    sparse representer theorem, reproducing kernel Banach space, minimum norm interpolation, regularization, sparse learning
    
\section{Introduction} 
The goal of this paper is to study a class of RKBSs that can promote sparsity for learning solutions in the spaces. In order to alleviate the computational burden brought by big data, developing sparse learning methods is the future of machine learning (\cite{hoefler2021future}). Reproducing kernel Banach spaces (RKBSs), spaces of functions on which the point-evaluation functionals are continuous, were introduced in \cite{zhang2009reproducing} as potential appropriate hypothesis spaces for sparse learning methods. Some of these spaces have sparsity promoting norms which lead to sparse representations for learning solutions under suitable bases. Following \cite{xu2022sparse}, we are interested in a class of RKBSs, each of which has an adjoint RKBS. Such an RKBS provides a reproducing kernel for representing not only the point evaluation functionals but also learning solutions in the spaces.

We first clarify the notion of sparsity. Intuitively, a vector or a sequence is said to be sparse if 
most of its components are zero. We say that a function in an RKBS has a sparse representation under the kernel sessions (a kernel with one of its two variables evaluated at given points) if the coefficient vector of the representation is sparse. It is well-known from the celebrated representer theorem 
(\cite{de1966splines,Argyriou2009,Cox1990,Kimeldorf1970,scholkopf2001generalized}) 
that when we learn a target function in a reproducing kernel Hilbert space (RKHS) from $n$ function values, a solution of the regularization problem is a linear combination of the $n$ kernel sessions. Often large amount of data are used to learn a target function and the learning solution is used in prediction or other decision-making procedures repeatedly. As a result, a dense learning solution will lead to large computational costs. As we have known, RKHSs do not lead to sparse learning solutions, see for example, \cite{xu2022sparse}. We then appeal to RKBSs as hypothesis spaces for learning methods and hope that some of them can offer sparse representations for their learning solutions under the kernel sessions, which have terms significantly fewer than the number of the given data points. 

We consider two typical learning models in an RKBS: the
minimum norm interpolation (MNI) problem and the regularization problem. Representer theorems for the solutions of these two models in RKBSs have received considerable attention in the literature (\cite{huang2021generalized, unser2021unifying, unser2016representer, 
wang2021representer, 
xu2019generalized, zhang2009reproducing, zhang2012regularized}). 
In particular, a systematic study of the representer theorems for a solution of
the MNI problem and the regularization problem in a Banach space was conducted by \cite{wang2021representer}. The resulting representer theorem stated that the
solutions lie in a subdifferential set of the norm function evaluated at a finite linear combination of given functionals. %
On the other hand, an explicit representer theorem for a variational problem in a Banach space was proved in \cite{boyer2019representer, bredies2020sparsity} in which the extreme points of the solution set of the variational problem were expressed as a finite linear combination of extreme points of the unit ball in the Banach space. The representer theorem of this kind is {\it data-independent}, as the searching area for extreme points claimed to express the solution is always the unit ball no matter what the given data are.

The road-map for establishing the sparse representer theorem for the solutions of the MNI problem and the regularization problem in an RKBS may be described as follows. By combining the advantages of the two representer theorems in \cite{wang2021representer} and \cite{boyer2019representer}, we first put forward an explicit solution representation for the MNI problem in a general Banach space, which is assumed to have a pre-dual space. The new explicit representer theorem allows us to represent the extreme points of the solution set as a linear combination of the extreme points of a subdifferential set of a pre-dual norm, evaluated at a linear combination of the functionals determined by given data. As a result, unlike the representer theorems presented in \cite{boyer2019representer,bredies2020sparsity}, which are data-independent, the new representer theorem is \textit{data-dependent}. Moreover, we prove that the extreme point set of the subdifferential set is a subset of the extreme point set of the unit ball and its cardinality is finite for certain specific Banach spaces, while the extreme point set of the unit ball includes infinitely many elements.
By making use of the explicit representer theorem, we propose a sufficient condition on an RKBS so that the solution of the MNI problem in the RKBS has a sparse kernel representation. The establishment of a sparse kernel representation for the learning solution requires addressing two issues. The first one is to represent the learning solution by the kernel sessions. Observing from the explicit representer theorem, it is intuitive to require that the elements in the subdifferential set, the building blocks of the solution set, coincide with the kernel sessions. This is the first assumption imposed on the RKBS. The second issue is to ensure that the kernel representation has fewer terms than the number of the given data points. To this end, we impose an additional assumption on the RKBS by requiring its norm to be equivalent to the $\ell_1$ norm which is well-known to promote sparsity. Under these two assumptions, we install kernel representations for the extreme points of the solution set of the MNI problem. The number of the kernel sessions emerging in the representation is bounded above by the rank of a matrix determined by the observed data. Under a mild condition, the rank of the matrix may be less than the number of the observed data. This leads to a sparse representer theorem for the solutions of the MNI problem in the RKBS. We then convert the resulting sparse representer theorem for the MNI problem to that for the regularization problem through a relation between the solutions of the two problems pointed out in \cite{micchelli2004function, wang2021representer}. Unlike the MNI problem, the regularization problem involves a regularization parameter. Under the assumptions on the RKBS, we reveal that the regularization parameter can further promote the sparsity level of the solution. Moreover, we show that the sequence space $\ell_1(\mathbb{N})$ and the space of functions constructed by the measure space satisfy the imposed assumptions. In this way, the sparse representer theorems for the MNI problem and the regularization problem in these two RKBSs are established,  showing that they indeed can promote sparsity in kernel representations for learning functions in these two spaces.  

Banach spaces were recently employed to understand neural networks and to promote sparsity.
A sparse technique was successfully applied in \cite{rosset2007l1} to learning with regularization for a feature space of infinite (possibly non-countable) dimension. Neural networks of a single hidden layer with infinitely many neurons as functions by integral
representation with a variational norm were studied in \cite{bach2017breaking}.
Based on the variational framework of $\mathrm{L}$-splines developed in \cite{unser2017splines} and the representer theorem established in \cite{bredies2020sparsity}, \cite{parhi2021banach} obtained a representer theorem expressing neural networks of a single hidden layer as solutions of a variation problem with the TV regularization in the Radon domain. The representer theorem for single-output neural networks of one hidden layer was extended in \cite{shenouda2023vector} to multi-output networks by considering vector-valued variation spaces.  RKBSs were employed in \cite{bartolucci2023understanding} to study neural networks with one hidden layer. %
It was shown in \cite{spek2023duality} that the Barron spaces were a class of integral RKBSs and their dual spaces as RKBSs were studied.  Results developed in \cite{rosset2007l1} were found useful in understanding neural networks in RKBSs. 

We organize this paper in six sections and two appendices. In Section 2, we review the framework of RKBSs and describe the MNI and regularization problems to be considered in this paper. Also, we define precisely the notion of sparsity of a learning solution in an RKBS. Moreover, we present several new observations for RKBSs, including the closure and weak* closure of the space of point evaluation functionals and the unique reproducing kernel of a RKBS whose $\delta$-dual space is isometrically
isomorphic to a Banach space of functions.
In Section 3, we first establish a representer theorem for the solutions of the MNI problem in a general Banach space that is assumed to have a pre-dual space. We then impose two assumptions on the RKBSs, which ensure that the spaces have the sparsity promoting property. Under the assumptions, we convert a representation of the solutions to a sparse kernel representation. In Section 4, we translate the sparse representer theorem established in Section 3 for the MNI problem to the regularization problem via a connection between the solutions of these two problems. We also study the effect of the regularization parameter on the sparsity of the regularized solutions and obtain choices of the regularization parameter for sparse solutions. In Section 5, we specialize the sparse representer theorem to the sequence space $\ell_1(\mathbb{N})$. By comparing this space with the sequence spaces $\ell_p(\mathbb{N})$ for $1<p<+\infty$, we show that $\ell_1(\mathbb{N})$ can promote sparsity of learning solutions but spaces $\ell_p(\mathbb{N})$  have no such a feature. Section 6 concerns a specific RKBS constructed by the measure space. We establish the sparse representer theorems for the solutions of the MNI problem and the regularization problem in the space by verifying that the RKBS satisfies the imposed assumptions. We include in Appendix A a complete proof of the explicit representer theorem of the MNI problem in a general Banach space established in Section 3. In Appendix B, we characterize the dual problem of the MNI problem in order to acquire the dual element emerging in the representer theorem established in this paper.

\section{Learning in RKBSs}
Motivated by developing sparse learning algorithms, RKBSs have been proposed as appropriate hypothesis spaces for learning an objective function from its values. Since the introduction of the notion of RKBSs, theory and applications of these function spaces have attracted much research interest (\cite{bartolucci2023understanding, fasshauer2015solving, lin2021multi, lin2022on, song2013reproducing, spek2023duality, xu2019generalized, zhang2009reproducing,  zhang2012regularized}). In this section, we recall the notion of RKBSs. We reveal the important role of the family of the point evaluation functionals in the dual space of an RKBS and identify a unique reproducing kernel for the RKBS as closed-form function representations for the point evaluation functionals. We also describe the MNI problem and the regularization problem in an RKBS and introduce the notion of sparse
kernel representations for solutions of such learning problems. 

We start with recalling the notion of RKBSs. A Banach space $\mathcal{B}$ is called a space of functions on a prescribed set $X$ if $\mathcal{B}$ is composed of functions defined on $X$ and for each $f\in\mathcal{B}$, $\|f\|_{\mathcal{B}}=0$ implies that $f(x) = 0$ for all $x\in X$. The notion of RKBSs was originally introduced in  \cite{zhang2009reproducing}, to ensure the stability of sampling function values from functions in the hypothesis space. 

\begin{definition}\label{RKBS}
A Banach space $\mathcal{B}$ of functions on a prescribed set $X$ is called an RKBS if all the point evaluation functionals $\delta_x$, $x\in X$, are continuous on $\mathcal{B}$, that is, for each $x\in X$, there exists a constant $C_x>0$ such that 
\[
|\delta_x(f)|\leq C_x\|f\|_\mathcal{B}, \quad\text{ for all }f\in\mathcal{B}.
\]
\end{definition}

This definition appeared originally in \cite{zhang2009reproducing} in a somewhat restricted version and its current form is adopted from \cite{xu2022sparse}. We let
\begin{equation}\label{Delta}
\Delta:=\mathrm{span}\{\delta_x:x\in X\}.
\end{equation}
It is essential to understand the closures of $\Delta$ under various types of topology. 

We first review necessary notions in Banach spaces. For a Banach space $\mathcal{B}$ with a norm $\|\cdot\|_{\mathcal{B}}$, we denote by $\mathcal{B}^*$ the dual space of $\mathcal{B}$, which is composed of all bounded linear functionals on $\mathcal{B}$ endowed with the norm 
$$
\|\nu\|_{\mathcal{B}^*}:=\sup_{\|f\|_{\mathcal{B}}\leq1}|\nu(f)|,\ \mbox{for all}\ \nu\in\mathcal{B}^*.
$$
The dual bilinear form $\langle\cdot,\cdot\rangle_{\mathcal{B}}$ on $\mathcal{B}^*\times\mathcal{B}$ is defined by $\langle\nu,f\rangle_{\mathcal{B}}:=\nu(f)$ for all $\nu\in\mathcal{B}^*$ and all $f\in\mathcal{B}$. The weak${}^*$ topology of the dual space $\mathcal{B}^{*}$ is the smallest topology for $\mathcal{B}^{*}$ such that, for each $f \in \mathcal{B}$, the linear functional $\nu \rightarrow\langle\nu, f\rangle_{\mathcal{B}}$ on $\mathcal{B}^{*}$ is continuous with respect to the topology. A topological property that holds with respect to the weak${}^*$ topology is said to hold weakly${}^*$. For example, a sequence $\nu_{n}, n \in \mathbb{N}$, in $\mathcal{B}^{*}$ is said to converge weakly${}^*$ to $\nu \in \mathcal{B}^{*}$ if 
$\lim _{n \rightarrow+\infty}\left\langle\nu_{n}, f\right\rangle_{\mathcal{B}}=\langle\nu, f\rangle_{\mathcal{B}},\ \mbox{for all}\ f\in\mathcal{B}.$
Let $\mathcal{M}$ and
$\mathcal{M}'$ be subsets of $\mathcal{B}$ and $\mathcal{B}^*$, respectively. The annihilator, in $\mathcal{B}^*$, of $\mathcal{M}$ is defined by 
$$
\mathcal{M}^\perp:=\{\nu\in\mathcal{B}^*:\langle\nu,f\rangle_{\mathcal{B}}=0, \ \text{for all }f\in\mathcal{M}\},
$$
and the annihilator, in $\mathcal{B}$, of $\mathcal{M}'$ is defined by
$$
^{\perp}\mathcal{M}':=\{f\in\mathcal{B}: \langle\nu,f\rangle_{\mathcal{B}}=0,
\ \mbox{for all}\  \nu\in\mathcal{M}'\}.
$$ 
We denote by $\overline{\mathcal{M}'}$ and $\overline{\mathcal{M}'}^{w^*}$ the closure of $\mathcal{M}'$ in the norm topology and the weak$^*$ topology of $\mathcal{B}^*$, respectively.

We now turn to characterizing the weak$^*$ density of the linear span $\Delta$ %
in $\mathcal{B}^*$. We observe from Definition \ref{RKBS} that if $\mathcal{B}$ is an RKBS on $X$, then there holds $\delta_x\in\mathcal{B}^*$ for all $x\in X$. That is, $\Delta\subseteq\mathcal{B}^*$ . It is known that if $\mathcal{B}$ is an RKHS, then there holds  $\overline{\Delta}=\mathcal{B}^*$ due to the Riesz representation theorem of the Hilbert space, where the closure is in the sense of the norm $\|\cdot\|_{\mathcal{B}}$ of the Hilbert space. This result cannot be extended to a general RKBS $\mathcal{B}$, because of the lack of the representation theorem in a general Banach space. However, we can show that the linear span $\Delta$ is weakly${}^*$ dense in $\mathcal{B}^*$. We present this result in the next proposition.

\begin{proposition}\label{prop: In RKBS, delta x in weak* dense}
    If $\mathcal{B}$ is an RKBS on $X$, then 
    $\overline{\Delta}^{w^*}=\mathcal{B}^*$.
\end{proposition}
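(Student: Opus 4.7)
The plan is to reduce the weak$^*$ density of $\Delta$ in $\mathcal{B}^{*}$ to the vanishing of its pre-annihilator, via the standard duality identity
\[
\overline{\Delta}^{w^{*}}=\bigl({}^{\perp}\Delta\bigr)^{\perp}.
\]
This identity holds for any linear subspace of a dual Banach space, and is a one-line consequence of the Hahn--Banach separation theorem applied in the weak$^{*}$ topology, using the well-known fact that the weak$^{*}$-continuous linear functionals on $\mathcal{B}^{*}$ are precisely the evaluations $\nu\mapsto\langle\nu,f\rangle_{\mathcal{B}}$ with $f\in\mathcal{B}$. Granting it, the proposition reduces to showing ${}^{\perp}\Delta=\{0\}$, after which $\overline{\Delta}^{w^{*}}=\{0\}^{\perp}=\mathcal{B}^{*}$ follows immediately.

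Next I would simply unfold the definitions to compute ${}^{\perp}\Delta$. An element $f\in\mathcal{B}$ lies in ${}^{\perp}\Delta$ exactly when $\langle\delta_{x},f\rangle_{\mathcal{B}}=0$ for every $x\in X$; by the very definition of the point evaluation functional, this is the statement that $f(x)=0$ for all $x\in X$. Because $\mathcal{B}$ is a \emph{space of functions} on $X$ in the sense of the paper, its elements are genuine functions on $X$ and the normalization ``$\|f\|_{\mathcal{B}}=0$ implies $f(x)=0$ for all $x\in X$'' forces the zero vector of $\mathcal{B}$ to coincide with the zero function; consequently, $f(x)=0$ for all $x\in X$ gives $f=0$ in $\mathcal{B}$. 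Hence ${}^{\perp}\Delta=\{0\}$, and combining with the duality identity yields the claim.

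The proof is genuinely short: no heavy functional-analytic machinery is required beyond Hahn--Banach. The only mildly delicate point, and hence the ``hard part,'' is the invocation of $\overline{M}^{w^{*}}=({}^{\perp}M)^{\perp}$ for subspaces $M\subseteq\mathcal{B}^{*}$; I would either cite this from a standard reference or sketch the separation argument (if $\nu_{0}\notin\overline{\Delta}^{w^{*}}$, produce a weak$^{*}$-continuous functional on $\mathcal{B}^{*}$ vanishing on $\Delta$ but not at $\nu_{0}$, realize it as evaluation at some $f\in\mathcal{B}$, and conclude $f\in{}^{\perp}\Delta\setminus\{0\}$, contradicting the computation above). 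Everything else is pure unravelling of definitions, so no further substantive obstacle is expected.
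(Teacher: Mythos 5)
Your proposal is correct and follows essentially the same route as the paper: the paper also computes ${}^{\perp}\Delta=\{0\}$ from the definition of point evaluations, concludes $({}^{\perp}\Delta)^{\perp}=\mathcal{B}^*$, and cites the standard duality identity $({}^{\perp}\Delta)^{\perp}=\overline{\Delta}^{w^*}$ (Proposition 2.6.6 of Megginson), which you justify instead by the Hahn--Banach separation argument. No gaps.
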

\begin{proof}
By Definition \ref{RKBS}, we clearly see that $\Delta\subseteq\mathcal{B}^*$. Note that $f\in$ $ ^{\perp}\Delta$ if and only if $\delta_x(f)=0,$ for all $x\in X$. That is, $f(x)=0,$ for all $x\in X$, or $f=0$. Therefore, we obtain that $^{\perp}\Delta=\{0\}$, which further leads to $(^{\perp}\Delta)^{\perp}=\mathcal{B}^*$. According to Proposition 2.6.6 of \cite{megginson2012introduction}, there holds that $(^{\perp}\Delta)^{\perp}=\overline{\Delta}^{w^*}$. Combining the above two equations, we conclude that $\overline{\Delta}^{w^*}=\mathcal{B}^*$.
\end{proof}

When an RKBS $\mathcal{B}$ is assumed to have a pre-dual space $\mathcal{B}_*$ satisfying $\Delta\subseteq\mathcal{B}_*$, we can identify $\overline{\Delta}$ with the pre-dual $\mathcal{B}_*$, where the closure $\overline{\Delta}$ is taken in the norm of  $\mathcal{B}^*$.  We say that the Banach space $\mathcal{B}$ has a pre-dual space if there exists a Banach space $\mathcal{B}_*$ such that $(\mathcal{B}_*)^*=\mathcal{B}$ and we call the space  $\mathcal{B}_*$ a pre-dual space of $\mathcal{B}$. The existence of a pre-dual space $\mathcal{B}_*$ makes it valid for $\mathcal{B}$ to be equipped with weak${}^*$ topology. Since the pre-dual space $\mathcal{B}_*$ can be isometrically embedded into $\mathcal{B}^*$, any element in $\mathcal{B}_*$ can be viewed as a bounded linear functional on $\mathcal{B}$. Namely,   $\mathcal{B}_*\subset\mathcal{B}^*$ and there holds 
\begin{equation}\label{natural-map-predual}
\langle\nu,f\rangle_{\mathcal{B}}=\langle f,\nu\rangle_{\mathcal{B}_*},
\ \mbox{for all} \ f\in\mathcal{B} \ \mbox{and all} \ \nu\in\mathcal{B}_*.
\end{equation}

\begin{proposition}\label{prop: In RKBS, delta x in dense in predual}
Suppose that an RKBS $\mathcal{B}$ on $X$ has a pre-dual space $\mathcal{B}_*$. If $\Delta\subseteq\mathcal{B}_*$, then $\overline{\Delta}=\mathcal{B}_*$.
\end{proposition}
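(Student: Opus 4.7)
The plan is to prove the identity by a standard Hahn--Banach duality argument, using the hypothesis $\Delta\subseteq\mathcal{B}_*$ together with the defining property of an RKBS that its elements are genuine functions on $X$. First I would observe that the inclusion $\overline{\Delta}\subseteq\mathcal{B}_*$ is immediate: since $\mathcal{B}_*$ is isometrically embedded in $\mathcal{B}^*$, it is a closed subspace of $\mathcal{B}^*$, and it already contains $\Delta$ by assumption, so it also contains the norm closure of $\Delta$ in $\mathcal{B}^*$.

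For the reverse inclusion $\mathcal{B}_*\subseteq\overline{\Delta}$, I would argue by contradiction via Hahn--Banach. Suppose $\overline{\Delta}$ were a proper subspace of $\mathcal{B}_*$. Then, because $\overline{\Delta}$ is closed in the Banach space $\mathcal{B}_*$, there would exist a nonzero continuous linear functional on $\mathcal{B}_*$ vanishing on $\overline{\Delta}$. Since $(\mathcal{B}_*)^{*}=\mathcal{B}$, such a functional corresponds to some $f\in\mathcal{B}$ with $f\neq 0$ satisfying $\langle f,\delta_x\rangle_{\mathcal{B}_*}=0$ for every $x\in X$.

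Now I would use the identification \eqref{natural-map-predual}: for every $x\in X$,
\[
f(x)=\delta_x(f)=\langle\delta_x,f\rangle_{\mathcal{B}}=\langle f,\delta_x\rangle_{\mathcal{B}_*}=0.
\]
Because $\mathcal{B}$ is a space of \emph{functions} on $X$ (so the zero function in $\mathcal{B}$ has zero norm, and conversely any $f\in\mathcal{B}$ that vanishes pointwise on $X$ is the zero element of $\mathcal{B}$), this forces $f=0$, contradicting $f\neq 0$. Hence no such functional exists, and $\overline{\Delta}=\mathcal{B}_*$.

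The argument is essentially routine; the only subtlety worth flagging is to be careful about which duality pairing is being used and in which space the closure is taken. The hypothesis $\Delta\subseteq\mathcal{B}_*$ ensures that the closure of $\Delta$ in $\mathcal{B}^*$ remains inside $\mathcal{B}_*$ (since $\mathcal{B}_*$ is norm-closed in $\mathcal{B}^*$ by the isometric embedding), so the Hahn--Banach step can be carried out inside $\mathcal{B}_*$ with predual $\mathcal{B}$, which is precisely what allows the point-evaluation functionals $\delta_x$ to be tested against a candidate $f\in\mathcal{B}$. No deeper machinery is needed.
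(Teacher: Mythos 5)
Your proof is correct and follows essentially the same route as the paper: the paper also shows that any $f\in\mathcal{B}=(\mathcal{B}_*)^*$ annihilating all $\delta_x$ must vanish pointwise and hence be zero, and then invokes the standard Hahn--Banach density criterion (which you simply spell out explicitly as a proof by contradiction). Your additional remark that $\mathcal{B}_*$ is norm-closed in $\mathcal{B}^*$, so the closure can be taken inside $\mathcal{B}_*$, is a correct clarification of a point the paper leaves implicit.
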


\begin{proof}
Suppose that $f\in\mathcal{B}$ satisfies $\langle f,\delta_{x}\rangle_{\mathcal{B}_*}=0$ for all $x\in X$. By equation \eqref{natural-map-predual}, we have that $\langle \delta_{x}, f\rangle_{\mathcal{B}}=0$ for all $x\in X$. This leads to $f(x)=0$ for all $x\in X$ and thus $f=0$. Due to the arbitrariness of $f\in\mathcal{B}$, we get the desired density result.  
\end{proof}

A reflexive RKBS $\mathcal{B}$ always takes the dual space $\mathcal{B}^*$
as a pre-dual space $\mathcal{B}_*$. As a direct consequence of Proposition \ref{prop: In RKBS, delta x in dense in predual}, we obtain that  $\overline{\Delta}=\mathcal{B}^*$ for a reflexive RKBS $\mathcal{B}$. 

It is known that each RKHS enjoys a unique reproducing kernel, which provides closed-form function representations for all point evaluation functionals on the RKHS. The existence of the reproducing kernel lies in the well-known Riesz representation theorem, which states that the dual space of a Hilbert space is isometrically isomorphic to itself. However, in general, the dual space of a Banach space is not isometrically isomorphic to itself. To ensure the existence of the reproducing kernel for an RKBS, we 
need to impose additional assumptions. Various hypotheses have been imposed on RKBSs (\cite{lin2022on,xu2022sparse,xu2019generalized,zhang2009reproducing}) in the literature. A hypothesis, which better captures the essence of reproducing kernels, was described in \cite{xu2022sparse}. Following \cite{xu2022sparse}, we call $\overline{\Delta}$ the $\delta$-dual space of $\mathcal{B}$ and denote it by $\mathcal{B}'$. Note that $\mathcal{B}'$ is the smallest Banach space that
contains all the point evaluation functionals on $\mathcal{B}$. We assume that the $\delta$-dual space $\mathcal{B}'$ is isometrically isomorphic to a Banach space $\mathcal{F}$ of functions on a set $X'$. In the rest of this paper, we will not distinguish $\mathcal{B}'$ from $\mathcal{F}$. We now identify a unique reproducing kernel with each RKBS satisfying the hypothesis.

\begin{proposition}\label{reproducing kernels for RKBS}
    Suppose that $\mathcal{B}$ is an RKBS on $X$ and its $\delta$-dual space $\mathcal{B}'$ is isometrically isomorphic to a Banach space of functions on $X'$. Then there exists a unique function $K:X\times X'\to\mathbb{R}$ such that the following statements hold. 
    
    (1) For each $x\in X$, $K(x,\cdot)\in \mathcal{B}'$ and
    \begin{equation}\label{def: reproducing property}
        f(x)=\langle K(x,\cdot),f\rangle_{\mathcal{B}},\ \mbox{for all}\ f\in\mathcal{B}.
    \end{equation}

    (2) The linear span 
    $K(X):=\mathrm{span}\{K(x,\cdot):x\in X\}$ is dense in $\mathcal{B}'$.
\end{proposition}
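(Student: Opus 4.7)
The plan is to construct the kernel $K$ directly from the point evaluation functionals via the isometric isomorphism between $\mathcal{B}'$ and the Banach space of functions on $X'$. The key observation is that for each $x\in X$, Definition \ref{RKBS} gives $\delta_x\in\mathcal{B}^*$, and by the definition of $\mathcal{B}'=\overline{\Delta}$ we have $\delta_x\in\mathcal{B}'$. Since $\mathcal{B}'$ is (identified with) a Banach space of functions on $X'$, the element $\delta_x$ is itself a function on $X'$. I would then define $K:X\times X'\to\mathbb{R}$ by setting $K(x,\cdot):=\delta_x$ for each $x\in X$, interpreted as a function on $X'$ through the isometric isomorphism. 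This immediately places $K(x,\cdot)\in\mathcal{B}'$ for every $x\in X$.

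Next I would verify the reproducing property. For any $f\in\mathcal{B}$ and $x\in X$, since $\mathcal{B}'\subseteq\mathcal{B}^*$ and $K(x,\cdot)=\delta_x$ as elements of $\mathcal{B}^*$, the dual pairing unpacks as
\begin{equation*}
\langle K(x,\cdot),f\rangle_{\mathcal{B}}=\langle\delta_x,f\rangle_{\mathcal{B}}=\delta_x(f)=f(x),
\end{equation*}
which is exactly the required identity \eqref{def: reproducing property}.

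For uniqueness, suppose $K_1$ and $K_2$ both satisfy the stated properties. Then for every $x\in X$ and every $f\in\mathcal{B}$,
\begin{equation*}
\langle K_1(x,\cdot)-K_2(x,\cdot),f\rangle_{\mathcal{B}}=f(x)-f(x)=0.
\end{equation*}
Hence $K_1(x,\cdot)-K_2(x,\cdot)$, viewed as an element of $\mathcal{B}^*$, annihilates all of $\mathcal{B}$ and is therefore the zero functional. Because $\mathcal{B}'$ is isometrically isomorphic to a Banach space of functions on $X'$, the zero functional corresponds to the zero function on $X'$, so $K_1(x,x')=K_2(x,x')$ for all $x'\in X'$. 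Since $x\in X$ was arbitrary, $K_1=K_2$.

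The main subtlety, rather than a genuine obstacle, is the careful use of the hypothesis that $\mathcal{B}'$ is isometrically isomorphic to a Banach space of functions on $X'$: this is precisely what allows us to pass from the abstract functional $\delta_x\in\mathcal{B}^*$ to a concrete function $K(x,\cdot)$ evaluable at points of $X'$, and it is also what makes the vanishing of a functional on all of $\mathcal{B}$ translate into the pointwise vanishing of the corresponding function on $X'$, securing uniqueness. Apart from this identification, the argument is essentially a direct reading of the definitions of $\mathcal{B}'$ and of the dual pairing.
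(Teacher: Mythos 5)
Your proposal is correct and follows essentially the same route as the paper: the paper's proof takes the representer $k_x\in\mathcal{B}'$ of $\delta_x$ (which is exactly your identification $K(x,\cdot)=\delta_x$ under the isometric isomorphism), defines $K(x,x'):=k_x(x')$, and proves uniqueness by the same argument that a functional vanishing on all of $\mathcal{B}$ is zero and hence, via the identification with a space of functions on $X'$, vanishes pointwise. No gaps.
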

\begin{proof}
We first prove statement (1). For each $x\in X$, since $\delta_x$ is a continuous linear functional on $\mathcal{B}$, there exists $k_x\in\mathcal{B}'$ such
that $f(x)=\langle k_x,f\rangle_{\mathcal{B}}, \ \mbox{for all}\ f\in\mathcal{B}.$
By defining a function $K:X\times X'\to\mathbb{R}$ as
$K(x,x'):=k_x(x'),\ x\in X, \ x'\in X'$,
we obtain that $K(x,\cdot)\in \mathcal{B}'$ for all $x\in X$. Thus, we observe that equation \eqref{def: reproducing property} holds. 

It suffices to verify that the function $K$ on $X\times X'$ satisfying the above properties is unique. Assume that there exists another $\widetilde{K}:X\times X'\rightarrow \mathbb{R}$ such that $\widetilde{K}(x,\cdot)\in \mathcal{B}'$ for all $x\in X$, and $f(x)=\langle \widetilde{K}(x,\cdot),f\rangle_{\mathcal{B}}$, for all $f\in\mathcal{B}$ and all $x\in X$. It follows from the above equation and equation \eqref{def: reproducing property} that 
$$
\langle K(x,\cdot)-\widetilde{K}(x,\cdot),f\rangle_{\mathcal{B}}=0, \ \ \mbox{for all}\ \ f\in\mathcal{B}\ \ \mbox{and all}\ \  x\in X.
$$
That is, for all $x\in X$, $K(x,\cdot)-\widetilde{K}(x,\cdot)=0$. Noting that $\mathcal{B}'$ is isometrically isomorphic to a Banach space of functions on $X'$, we conclude that $K(x,x')-\widetilde{K}(x,x')=0$ for all $x\in X$, $x'\in X'$, which is equivalent to $K=\widetilde{K}$.

We next show the density stated in (2). It follows from statement (1) that the linear span $K(X)$ is isometrically
isomorphic to $\Delta$ defined by \eqref{Delta}. Hence, the closure $\overline{K(X)}$ in the norm topology is isometrically
isomorphic to the closure $\overline{\Delta}$ in the norm topology, which together with $\overline{\Delta}=\mathcal{B}'$ leads to  $\overline{K(X)}=\mathcal{B}'$.  
\end{proof}

We call the function $K:X\times X'\rightarrow\mathbb{R}$, satisfying $K(x,\cdot)\in \mathcal{B}'$ for all $x\in X$, and equation \eqref{def: reproducing property}, the reproducing kernel for the RKBS $\mathcal{B}$. Moreover, equation \eqref{def: reproducing property} is called the reproducing property. 

Motivated by representing the solutions of learning problems in an RKBS via its reproducing kernel, we introduce the notion of the adjoint RKBS.  %
Specifically, if in addition $\mathcal{B}'$ is an RKBS on $X'$, $K(\cdot,x')\in\mathcal{B}$ for all $x'\in X'$, and 
\begin{equation}\label{reproducing property in B'}
g(x')=\langle g,K(\cdot, x')\rangle_{\mathcal{B}}, \text{ for all }g\in\mathcal{B}'\text{ and all }x'\in X',
\end{equation}
we call $\mathcal{B}'$ an adjoint RKBS of $\mathcal{B}$ and call $\mathcal{B}$, $\mathcal{B}'$ a pair of RKBSs. We introduce the linear span of the point evaluation functions on $\mathcal{B}'$ by 
\begin{equation}\label{Delta'}
\Delta':=\mathrm{span}\{\delta_{x'}:x'\in X'\}.
\end{equation}
Observing from equation \eqref{reproducing property in B'}, the linear span  $K(X'):=\mathrm{span}\{K(\cdot,x'):x'\in X'\}$, as a subset of $\mathcal{B}$, is isometrically
isomorphic to $\Delta'$ defined by \eqref{Delta'}. Moreover, the next result shows that if $\mathcal{B}'$ is a pre-dual space of $\mathcal{B}$, then the function space $K(X')$ is large enough to fill in $\mathcal{B}$ in the sense that any function $f$ in $\mathcal{B}$ could be approximated arbitrarily well by elements in $K(X')$ with respect to weak$^*$ topology.

\begin{proposition}\label{prop: nabla is weak dense in B}
Suppose that $\mathcal{B}$ is an RKBS on $X$, the $\delta$-dual space $\mathcal{B}'$ is an adjoint RKBS on $X'$ of $\mathcal{B}$ and $K$ is the reproducing kernel. If $\mathcal{B}'$ is a pre-dual space of $\mathcal{B}$, then there holds $\overline{K(X')}^{w^*}=\mathcal{B}$.
\end{proposition}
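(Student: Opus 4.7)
The plan is to reduce this proposition to Proposition 2.2 applied to the adjoint RKBS $\mathcal{B}'$ itself (viewed as an RKBS on $X'$), using the fact that $\mathcal{B}$ plays the role of the topological dual of $\mathcal{B}'$.

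First, I would translate the adjoint reproducing property (2.4) into a statement about point evaluation functionals on $\mathcal{B}'$. Since $\mathcal{B}'$ is a pre-dual of $\mathcal{B}$, relation \eqref{natural-map-predual} gives $\langle g,f\rangle_{\mathcal{B}}=\langle f,g\rangle_{\mathcal{B}'}$ for every $f\in\mathcal{B}$ and $g\in\mathcal{B}'$. Combining this with (2.4) yields
\[
g(x')=\langle g,K(\cdot,x')\rangle_{\mathcal{B}}=\langle K(\cdot,x'),g\rangle_{\mathcal{B}'},\qquad g\in\mathcal{B}',\ x'\in X'.
\]
Thus each $K(\cdot,x')\in\mathcal{B}$ is precisely the point evaluation functional $\delta'_{x'}$ on $\mathcal{B}'$, acting as an element of $(\mathcal{B}')^*=\mathcal{B}$, and $K(X')$ is exactly the analogue of the span $\Delta$ for the RKBS $\mathcal{B}'$.

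Next, I would re-run the annihilator argument of Proposition \ref{prop: In RKBS, delta x in weak* dense} in the dual pair $(\mathcal{B}',\mathcal{B})$. For $g\in\mathcal{B}'$ the condition $\langle K(\cdot,x'),g\rangle_{\mathcal{B}'}=0$ for all $x'\in X'$ is equivalent to $g(x')=0$ for all $x'\in X'$; since $\mathcal{B}'$ is a Banach space of functions on $X'$, this forces $g=0$. Hence the pre-annihilator (in $\mathcal{B}'$) of $K(X')$ is $\{0\}$. Invoking Proposition 2.6.6 of \cite{megginson2012introduction}, as in the proof of Proposition \ref{prop: In RKBS, delta x in weak* dense}, we conclude that
\[
\overline{K(X')}^{w^*}=\bigl({}^{\perp}K(X')\bigr)^{\perp}=\{0\}^{\perp}=\mathcal{B},
\]
where the weak$^*$ closure is taken in $\mathcal{B}=(\mathcal{B}')^*$.

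I do not expect any substantial obstacle in this argument; it is essentially Proposition \ref{prop: In RKBS, delta x in weak* dense} with the roles of $\mathcal{B}$ and $\mathcal{B}^*$ reversed. The only delicate point is bookkeeping between the two dual brackets $\langle\cdot,\cdot\rangle_{\mathcal{B}}$ and $\langle\cdot,\cdot\rangle_{\mathcal{B}'}$, which is handled cleanly by \eqref{natural-map-predual}; once $K(\cdot,x')$ is identified with the point evaluation functional on $\mathcal{B}'$, the density follows at once.
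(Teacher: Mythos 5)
Your argument is correct and is essentially the paper's proof: identify $K(\cdot,x')$ with the point evaluation functionals on $\mathcal{B}'$ via the adjoint reproducing property and $(\mathcal{B}')^*=\mathcal{B}$, then apply Proposition \ref{prop: In RKBS, delta x in weak* dense} (or, as you do, simply rerun its annihilator argument) with $\mathcal{B}$ replaced by $\mathcal{B}'$. The extra bookkeeping with \eqref{natural-map-predual} is fine but adds nothing beyond the paper's one-line reduction.
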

\begin{proof}
Since $\mathcal{B}'$ is an RKBS on $X'$, Proposition \ref{prop: In RKBS, delta x in weak* dense} with $\mathcal{B}$ being replaced by $\mathcal{B}'$ ensures that $\overline{\Delta'}^{w^*}=(\mathcal{B}')^*$. This together with the assumption that $(\mathcal{B}')^*=\mathcal{B}$ leads to $\overline{\Delta'}^{w^*}=\mathcal{B}$.
Note that the linear span  $K(X')$ is isometrically
isomorphic to $\Delta'$. Hence, we get the desired  weak$^*$ density of $K(X')$ in $\mathcal{B}$.

\end{proof}

We also characterize the RKBS in terms of the feature representation.%

\begin{proposition}\label{prop: feature map}
A Banach space $\mathcal{B}$ of functions on $X$ is an RKBS if and only if there exist a Banach space $W$ and a map $\Phi:X\to W^*$ satisfying \begin{equation}\label{WeakStarClosure}
    \overline{\mathrm{span}\{\Phi(x):x\in X\}}^{w^*}=W^*,
\end{equation}
such that
\begin{equation}\label{eq: feature characterization of RKBS}
    \mathcal{B}=\left\{\langle\Phi(\cdot),u\rangle_W: u\in W\right\},
\end{equation}   
equipped with  
\begin{equation}\label{eq: feature characterization of norm}
    \|\langle\Phi(\cdot),u\rangle_W\|_{\mathcal{B}}=\|u\|_W, \ \ \mbox{for all}\ \ u\in W.
\end{equation}
\end{proposition}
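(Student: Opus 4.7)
The plan is to prove the two implications by exhibiting a natural identification between $\mathcal{B}$ and a Banach space $W$ via an appropriate $W^*$-valued feature map, using Proposition \ref{prop: In RKBS, delta x in weak* dense} for the non-trivial direction.

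For the necessity part, I would take the most transparent choice $W := \mathcal{B}$ and $\Phi(x) := \delta_x$ for each $x \in X$, so that $\Phi$ maps $X$ into $W^* = \mathcal{B}^*$ (well-defined because $\mathcal{B}$ is an RKBS). For any $u \in W = \mathcal{B}$ the identity $\langle \Phi(x), u\rangle_W = \delta_x(u) = u(x)$ gives $\langle \Phi(\cdot), u\rangle_W = u$ pointwise on $X$, so the representation \eqref{eq: feature characterization of RKBS} holds tautologically and the norm identity \eqref{eq: feature characterization of norm} reads $\|u\|_{\mathcal{B}} = \|u\|_W$. The weak$^*$-density condition \eqref{WeakStarClosure} is exactly $\overline{\Delta}^{w^*} = \mathcal{B}^*$, which is Proposition \ref{prop: In RKBS, delta x in weak* dense}.

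For the sufficiency part, I would first use the weak$^*$-density hypothesis to show that the linear map $T : W \to \mathbb{R}^X$ defined by $Tu := \langle \Phi(\cdot), u\rangle_W$ is injective, which is the essential subtlety and the only place the hypothesis \eqref{WeakStarClosure} is really used. Indeed, if $Tu \equiv 0$ on $X$, then $u$, viewed as an element of $W^{**}$, vanishes on $\mathrm{span}\{\Phi(x):x\in X\}$; since this span is weak$^*$-dense in $W^*$ and evaluation at $u$ is weak$^*$-continuous on $W^*$, we conclude $\langle \nu, u\rangle_W = 0$ for every $\nu \in W^*$, hence $u = 0$. Consequently the prescription \eqref{eq: feature characterization of norm} unambiguously defines a norm on $\mathcal{B}$, and $T$ is an isometric isomorphism from the Banach space $W$ onto $\mathcal{B}$, so $\mathcal{B}$ is itself a Banach space. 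Injectivity of $T$ also ensures the separation condition $\|f\|_{\mathcal{B}} = 0 \Rightarrow f \equiv 0$ on $X$, so $\mathcal{B}$ is a Banach space of functions in the sense of the paper.

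Finally, continuity of point evaluation on $\mathcal{B}$ is immediate from the feature representation: for any $f = \langle \Phi(\cdot), u\rangle_W \in \mathcal{B}$ and $x \in X$,
\[
|f(x)| = |\langle \Phi(x), u\rangle_W| \leq \|\Phi(x)\|_{W^*}\,\|u\|_W = \|\Phi(x)\|_{W^*}\,\|f\|_{\mathcal{B}},
\]
so the choice $C_x := \|\Phi(x)\|_{W^*}$ verifies Definition \ref{RKBS}. The main obstacle, as indicated above, is the well-definedness step: one must convert the weak$^*$-density of $\{\Phi(x):x\in X\}$ into pointwise injectivity of the feature representation, which is where the hypothesis \eqref{WeakStarClosure} plays its decisive role.
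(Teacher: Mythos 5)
Your proposal is correct and follows essentially the same route as the paper: necessity via $W:=\mathcal{B}$, $\Phi(x):=\delta_x$ together with Proposition \ref{prop: In RKBS, delta x in weak* dense}, and sufficiency by using the weak$^*$-density \eqref{WeakStarClosure} to get uniqueness of the feature representation (well-definedness of the norm), the isometric isomorphism with $W$, and the bound $|f(x)|\leq\|\Phi(x)\|_{W^*}\|f\|_{\mathcal{B}}$ for continuity of point evaluations. Your explicit remark that evaluation at $u\in W\subset W^{**}$ is weak$^*$-continuous, which is what turns density of the span into injectivity, is a slightly more detailed justification of a step the paper states tersely, but it is the same argument.
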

\begin{proof}
Suppose that $\mathcal{B}$ is an RKBS on $X$. We choose the Banach space $W:=\mathcal{B}$ and the map $\Phi:X\rightarrow\mathcal{B}^*$ defined by $\Phi(x):=\delta_x$, $x\in X$. Hence, we have that
$\mathrm{span}\{\Phi(x):x\in X\}=\Delta$. It follows from Proposition \ref{prop: In RKBS, delta x in weak* dense} that the density condition \eqref{WeakStarClosure} holds true. In addition, we can trivially represent any $f\in\mathcal{B}$ as $f=\langle\Phi(\cdot),u\rangle_W$ with $u:=f$ and hence $\|f\|_{\mathcal{B}}=\|u\|_W$. 
    
Conversely, suppose that there exist $W$ and $\Phi:X\to W^*$ such that equation \eqref{WeakStarClosure} holds true. We then define a vector space $\mathcal{B}$ by \eqref{eq: feature characterization of RKBS} and a map $\|\cdot\|_{\mathcal{B}}:\mathcal{B}\rightarrow\mathbb{R}$ by \eqref{eq: feature characterization of norm}. We point out that under the assumption that $\mathrm{span}\{\Phi(x):x\in X\}$ is weakly${}^*$ dense in $W^*$, any function in $\mathcal{B}$ has a unique representation. Indeed, if $\langle\Phi(x),u\rangle_W=\langle\Phi(x),v\rangle_W$, for all $x\in X$, then 
$\langle\Phi(x),u-v\rangle_W=0$, for all $x\in X$. This together with equation \eqref{WeakStarClosure} leads to $u=v$. As a result, the map defined by \eqref{eq: feature characterization of norm} is well-defined. It is easy to verify that $\|\cdot\|_{\mathcal{B}}$ is a norm on $\mathcal{B}$. Moreover, $\mathcal{B}$ is isometrically isomorphic to $W$ and then is a Banach space. It suffices to show that point evaluation functionals are continuous on $\mathcal{B}$. For all $x\in X$ and all $f=\langle\Phi(\cdot),u\rangle_W\in\mathcal{B}$ with $u\in W$, it holds that 
$$
|f(x)|\leq\|\Phi(x)\|_{W^*}\|u\|_{W}=\|\Phi(x)\|_{W^*}\|f\|_{\mathcal{B}}.
$$
This ensures that the point evaluation functionals are all continuous on $\mathcal{B}$. According to Definition \ref{RKBS}, $\mathcal{B}$ is an RKBS.  
\end{proof}

To close this section, we describe two learning problems in RKBSs to be considered in this paper. Learning a function from a finite number of sampled data is often formulated as
a MNI problem or a regularization problem. For each $n\in\mathbb{N}$, let $\mathbb{N}_n:=\{1,2,\ldots,n\}$. Suppose that $\mathcal{B}$ is an RKBS having a pre-dual space $\mathcal{B}_*$ and $\nu_j\in\mathcal{B}_*$, $j\in\mathbb{N}_n$, are linearly independent. Associated with these functionals, we set
\begin{equation}\label{V_span}
\mathcal{V}:=\mathrm{span}\{\nu_j:j\in\mathbb{N}_n\},
\end{equation}
and define an operator $\mathcal{L}: \mathcal{B} \rightarrow \mathbb{R}^{n}$ by  
\begin{equation}\label{operator L}
    \mathcal{L}(f):=\left[\left\langle\nu_{j}, f\right\rangle_{\mathcal{B}}: j \in \mathbb{N}_{n}\right], \text { for all } f \in \mathcal{B}.
\end{equation}
Let $\mathbf{y}:=\left[y_{j}: j \in \mathbb{N}_{n}\right] \in \mathbb{R}^{n}$ be a given vector. Learning a
target function in $\mathcal{B}$ from the given sampled data $\{(\nu_j,y_j):j\in\mathbb{N}_n\}$ consists of solving the
first kind operator equation \begin{equation}\label{operator_equation}
\mathcal{L}(f)=\mathbf{y},
\end{equation}
for $f\in\mathcal{B}$. The MNI aims at finding an element in $\mathcal{B}$, having the smallest
norm and satisfying equation \eqref{operator_equation}.
By introducing a subset of $\mathcal{B}$ as
\begin{equation}\label{hyperplane My}
    \mathcal{M}_{\mathbf{y}}:=\{f \in \mathcal{B}: \mathcal{L}(f)=\mathbf{y}\},
\end{equation}
we formulate the MNI problem with the given data $\{(\nu_j,y_j):j\in\mathbb{N}_n\}$ as  
\begin{equation}\label{MNI}
    \inf \left\{\|f\|_{\mathcal{B}}: f \in \mathcal{M}_{\mathbf{y}}\right\}.
\end{equation}
To address the ill-posedness of equation \eqref{operator_equation}, the regularization approach adds a regularization term to a data fidelity term constructed
from equation \eqref{operator_equation} such that the resulting optimization problem is much less
sensitive to disturbances. 
Specifically, the regularization problem has the form 
\begin{equation}\label{regularization problem special}
\inf \left\{\mathcal{Q}_{\mathbf{y}}(\mathcal{L}(f))+\lambda \varphi\left(\|f\|_{\mathcal{B}}\right): f \in \mathcal{B}\right\},
\end{equation}
where $\mathcal{Q}_{\mathbf{y}}: \mathbb{R}^{n} \rightarrow \mathbb{R}_{+}:=[0,+\infty)$ is a loss function, $\varphi: \mathbb{R}_{+} \rightarrow \mathbb{R}_{+}$ is a regularizer and $\lambda$ is a positive regularization parameter. We always assume $\mathcal{Q}_{\mathbf{y}}$ and $\varphi$ to be both lower semi-continuous. A function $\mathcal{T}$ mapping from a topological space $\mathcal{X}$ to $\mathbb{R}$ is said to be lower semi-continuous if
$\mathcal{T}(f)\leq\liminf_{\alpha}\mathcal{T}(f_{\alpha})$
whenever $f_{\alpha}$, $\alpha\in I,$ for some index set $I$ is a net in $\mathcal{X}$ converging to some element $f\in \mathcal{X}$. We also assume that $\varphi$ is increasing and coercive, that is, $\lim_{t\rightarrow+\infty}\varphi(t)=+\infty$. Throughout this paper, we denote by $\rm{S}(\mathbf{y})$ and $\rm{R}(\mathbf{y})$ the solution sets of the MNI problem \eqref{MNI} and the regularization problem \eqref{regularization problem special} with $\mathbf{y}\in\mathbb{R}^n$, respectively.

Proposition \ref{prop: nabla is weak dense in B} motivates us to consider representing the solutions of the MNI problem and the regularization problems in $\mathcal{B}$ by the kernel sessions $K(\cdot,x')$, $x'\in X'$. 
In this paper, a solution of a learning problem in an RKBS from $n$ data points is said to have a sparse kernel representation if there exist a nonnegative integer $m$ less than $n$ and $x_j'\in X'$, $\alpha_j\in\mathbb{R}\backslash\{0\}$, $j\in\mathbb{N}_m$, such that 
$$
f(\cdot)=\sum_{j\in\mathbb{N}_m} \alpha_j K(\cdot, x_j').
$$
We call the smallest nonnegative integer $m$ such that the above equation holds the sparsity level of $f$ under the kernel representation.

\section{Sparse Representer Theorem for MNI}\label{sec: sparse representer theorem of MNI}
The goal of this section is to establish a sparse representer theorem for solutions of the MNI problem in an RKBS. For this purpose, we first provide a representer theorem for solutions of the MNI problem in a general Banach space. We then formulate two assumptions on the RKBS and the functionals used to produce the sampled data, which together with the representer theorem transfer the MNI problem in the RKBS of infinite dimension to a finite dimensional one. We finally establish the sparse representer theorem for solutions of the MNI problem in the RKBS using the sparse representation for the solutions of the finite dimensional MNI problem.

We begin with the MNI problem \eqref{MNI} in an RKBS $\mathcal{B}$. To obtain sparse kernel representations for the solutions, we first need an explicit representer theorem. A representer theorem for the solutions of the MNI problem (\ref{MNI}) in a general Banach space having a pre-dual space was established in \cite{wang2021representer}. To describe this result, we recall the notion of the subdifferential of a convex function on a Banach space $\mathcal{B}$. A convex function $\phi:\mathcal{B}\rightarrow\mathbb{R}\cup\{+\infty\}$ is said to be subdifferentiable at $f\in\mathcal{B}$ if there exists $\nu\in\mathcal{B}^*$ such that
$$
\phi(g)-\phi(f)\geq\langle\nu,g-f\rangle_{\mathcal{B}},\ \mbox{for all}\ g\in\mathcal{B}.
$$ 
The set of all functionals in $\mathcal{B}^*$ satisfying the above inequalities is called the \textit{subdifferential} of $\phi$ at $f$ and denoted by $\partial \phi(f)$. A subset $A$ of a vector space $\mathcal{X}$ is called a convex set if $t x+(1-t)y\in A$ for all $x,y\in A$ and all $t\in[0,1]$. It can be directly verified by definition that for any $f\in\mathcal{B}$,  $\partial \phi(f)$ is a convex and weakly${}^*$ closed subset of $\mathcal{B}^*$. In particular, the subdifferential of the norm function $\|\cdot\|_\mathcal{B}$ at each $f\in\mathcal{B}\backslash\{0\}$ has the following essential property (\cite{cioranescu2012geometry})
\begin{equation}\label{subdifferential = norming functional}
  \partial \|\cdot\|_\mathcal{B}(f)=\left\{\nu\in\mathcal{B}^*:\|\nu\|_{\mathcal{B}^*}=1,\langle \nu,f\rangle_{\mathcal{B}}=\|f\|_{\mathcal{B}}\right\}.
\end{equation}
The elements in $\partial \|\cdot\|_\mathcal{B}(f)$ are also called norming functionals of $f$ as applying such functional on $f$ turns out exactly the norm of $f$. Suppose that $\mathcal{B}$ is a Banach space having a pre-dual space $\mathcal{B}_*$ and $\nu_j\in \mathcal{B}_*$,
$j\in\mathbb{N}_n$, are linearly independent. Theorem 12 in \cite{wang2021representer} states that $\hat{f} \in \mathcal{B}$ is a solution of 
the MNI problem \eqref{MNI} with $\mathbf{y}\in\mathbb{R}^n$ if and only if $\hat{f} \in \mathcal{M}_{\mathbf{y}}$ and there exists $\hat\nu\in\mathcal{V}$, such that
\begin{equation}\label{f hat in dualality mapping}
   \hat{f} \in \|\hat\nu\|_{\mathcal{B}_*} \partial\|\cdot\|_{\mathcal{B}_{*}}\left(\hat\nu\right).
\end{equation}
 This representer theorem provides a characterization for the solutions of the MNI problem \eqref{MNI} by using an inclusion relation.
We will develop an explicit representer theorem for the solution $\hat f$ of the MNI problem \eqref{MNI} based on the aforementioned result.

We recall the notion of extreme points of a closed convex subset. Let $A$ be a nonempty closed convex subset of a Hausdorff topological vector space $\mathcal{X}$. An element $z\in A$ is said to be an extreme point of $A$ if $x,y\in A$ and $tx+(1-t)y=z$ for some $t\in(0,1)$ implies that $x=y=z$. By $\mathrm{ext}(A)$ we denote the set of extreme points of $A$. It is known (\cite{megginson2012introduction}) that $z\in \mathrm{ext}(A)$ if and only if whenever $x,y\in A$ and $z=(x+y)/2$, it follows that $x=y=z$. We also need the notion of the convex hull of a subset. 
The convex hull of a subset $A$ of a vector space $\mathcal{X}$, denoted by $\mathrm{co}(A)$, is the smallest convex set that contains $A$. It is easy to show that 
$$
\operatorname{co}(A)=\left\{\sum_{j \in \mathbb{N}_{n}} t_{j} x_{j}: x_{j} \in A, t_{j} \in[0,+\infty), \sum_{j \in \mathbb{N}_{n}} t_{j}=1, j \in \mathbb{N}_{n}, n \in \mathbb{N}\right\}.
$$ 
If $\mathcal{X}$ has a topology, then the closed convex hull of $A$, denoted by $\overline{\mathrm{co}}(A)$, is the smallest closed convex set that contains $A$.
The celebrated Krein-Milman theorem (\cite{megginson2012introduction}) states that if $A$ is a nonempty compact convex subset of a Hausdorff locally convex topological vector space $\mathcal{X}$, then $A$ is the closed convex hull of its set of extreme points, that is, 
$A=\overline{\mathrm{co}}\left(\mathrm{ext}(A)\right)$.
As a direct consequence of this result, the set $\mathrm{ext}(A)$ must be nonempty. Notice that if the Banach space $\mathcal{B}$ has a pre-dual space, then it  equipped with the weak${}^*$ topology is a Hausdorff locally convex topological vector space. The solution set $\rm{S}(\mathbf{y})$, guaranteed by Lemma \ref{lemma: solution set of MNI} in Appendix A, is a nonempty, convex and weakly${}^*$ compact subset of $\mathcal{B}$. Then the Krein-Milman Theorem enables us to express $\rm{S}(\mathbf{y})$ by its extreme points as
\begin{equation}\label{Krein Milamn for S_MNI}
    \rm{S}(\mathbf{y})=\overline{\mathrm{co}}\left(\mathrm{ext}\left(\rm{S}(\mathbf{y})\right)\right),
\end{equation}
where the closed convex hull is taken under the weak${}^*$ topology. Observing from equation \eqref{Krein Milamn for S_MNI}, we will only provide closed-form representations for the extreme points of $\rm{S}(\mathbf{y})$.

With the help of the dual element $\hat\nu$ appearing in inclusion \eqref{f hat in dualality mapping},  we establish in the following proposition an explicit representer theorem for the extreme points of the solution set $\rm{S}({\mathbf{y}})$ of problem (\ref{MNI}). In this result, we consider a general Banach space which has a pre-dual space. Its complete proof is included in Appendix A. 

\begin{proposition}\label{theorem: representer for MNI}
Suppose that $\mathcal{B}$ is a Banach space having a pre-dual space $\mathcal{B}_{*}$. Let $\nu_{j} \in \mathcal{B}_{*}$, $j \in \mathbb{N}_{n}$, be linearly independent and $\mathbf{y} \in \mathbb{R}^{n}\backslash\{\mathbf{0}\}$. Suppose that  $\mathcal{V}$ and   $\mathcal{M}_{\mathbf{y}}$ are defined by \eqref{V_span} and \eqref{hyperplane My}, respectively, and $\hat\nu\in\mathcal{V}$ satisfies 
\begin{equation}\label{Non-empty-set}
(\|\hat\nu\|_{\mathcal{B}_*}\partial\|\cdot\|_{\mathcal{B}_*}(\hat\nu))\cap\mathcal{M}_{\mathbf{y}}\neq\emptyset.
\end{equation}
Then for any $\hat f\in \mathrm{ext}(\rm{S}({\mathbf{y}}))$, there exist $\gamma_j\in\mathbb{R}$, $j\in\mathbb{N}_{n}$, with  $\sum_{j\in\mathbb{N}_{n}}\gamma_j=\|\hat\nu\|_{\mathcal{B}_*}$ and $u_j\in\mathrm{ext}\left(\partial\|\cdot\|_{\mathcal{B}_*}(\hat\nu)\right)$, $j\in\mathbb{N}_{n}$, such that
    \begin{equation}\label{eq: expansion of p}
        \hat f=\sum\limits_{j\in\mathbb{N}_{n}} \gamma_j u_j.
    \end{equation}
\end{proposition}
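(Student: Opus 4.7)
The plan is to combine the representer theorem of \cite{wang2021representer} with an application of Dubins' theorem (L.~Dubins, 1962). Writing $D:=\partial\|\cdot\|_{\mathcal{B}_*}(\hat\nu)$, my first step will be to establish the identity
\[
\rm{S}(\mathbf{y})=\|\hat\nu\|_{\mathcal{B}_*}D\cap\mathcal{M}_{\mathbf{y}}.
\]
The inclusion $\supseteq$ uses \eqref{subdifferential = norming functional}: any $f_{*}\in\|\hat\nu\|_{\mathcal{B}_*}D\cap\mathcal{M}_{\mathbf{y}}$ satisfies $\|f_{*}\|_{\mathcal{B}}=\|\hat\nu\|_{\mathcal{B}_*}$ and $\langle\hat\nu,f_{*}\rangle_{\mathcal{B}}=\|\hat\nu\|_{\mathcal{B}_*}^{2}$. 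Writing $\hat\nu=\sum_{j}c_{j}\nu_{j}$, the functional $\langle\hat\nu,\cdot\rangle_{\mathcal{B}}$ is the constant $\sum_{j}c_{j}y_{j}=\|\hat\nu\|_{\mathcal{B}_*}^{2}$ on $\mathcal{M}_{\mathbf{y}}$, so the duality bound $\|g\|_{\mathcal{B}}\|\hat\nu\|_{\mathcal{B}_*}\geq\langle\hat\nu,g\rangle_{\mathcal{B}}$ yields $\|g\|_{\mathcal{B}}\geq\|\hat\nu\|_{\mathcal{B}_*}$ for every $g\in\mathcal{M}_{\mathbf{y}}$, with equality at $f_{*}$; hence $f_{*}\in\rm{S}(\mathbf{y})$. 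The reverse inclusion follows because any optimal $g$ also achieves equality in this same bound and therefore $g/\|\hat\nu\|_{\mathcal{B}_*}\in D$ by \eqref{subdifferential = norming functional}.

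Next I turn to the geometry of $D$. From \eqref{subdifferential = norming functional}, $D$ lies in the closed unit ball of $\mathcal{B}=(\mathcal{B}_{*})^{*}$; it is weak${}^{*}$-closed, being an intersection of weak${}^{*}$-closed half-spaces coming from the subdifferential inequalities, and it is convex, so weak${}^{*}$-compact by Banach-Alaoglu. The same identity confines $D$ to the affine hyperplane $H:=\{f\in\mathcal{B}:\langle\hat\nu,f\rangle_{\mathcal{B}}=\|\hat\nu\|_{\mathcal{B}_*}\}$. Setting $\tilde{\mathcal{M}}:=\{f\in\mathcal{B}:\langle\nu_{j},f\rangle_{\mathcal{B}}=y_{j}/\|\hat\nu\|_{\mathcal{B}_*},\,j\in\mathbb{N}_{n}\}$, the preceding identity rescales to $\rm{S}(\mathbf{y})=\|\hat\nu\|_{\mathcal{B}_*}(D\cap\tilde{\mathcal{M}})$, so extreme points correspond under scaling and $\hat f/\|\hat\nu\|_{\mathcal{B}_*}\in\mathrm{ext}(D\cap\tilde{\mathcal{M}})$. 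A short computation using $\sum_{j}c_{j}y_{j}=\|\hat\nu\|_{\mathcal{B}_*}^{2}$ shows $\tilde{\mathcal{M}}\subseteq H$.

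The heart of the argument is to represent the extreme point $\hat f/\|\hat\nu\|_{\mathcal{B}_*}$ as a convex combination of elements of $\mathrm{ext}(D)$. I will invoke Dubins' theorem, which asserts that in a locally convex Hausdorff space, every extreme point of the intersection of a compact convex set $K$ with an affine subspace of codimension $k$ is a convex combination of at most $k+1$ extreme points of $K$. The anticipated main obstacle is that a direct application in $\mathcal{B}$ uses codimension $n$ and produces $n+1$ terms, one too many. I overcome this by working inside $H$ endowed with the induced weak${}^{*}$ topology: $D$ is still weak${}^{*}$-compact convex there, and $\tilde{\mathcal{M}}$ has codimension at most $n-1$ in $H$, because the linear combination with coefficients $c_{j}$ of the $n$ equations defining $\tilde{\mathcal{M}}$ reproduces the defining equation of $H$ and is therefore already enforced. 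Dubins' theorem then produces $\hat f/\|\hat\nu\|_{\mathcal{B}_*}=\sum_{j\in\mathbb{N}_{n}}\tilde\gamma_{j}u_{j}$ with $\tilde\gamma_{j}\geq 0$, $\sum_{j}\tilde\gamma_{j}=1$ and $u_{j}\in\mathrm{ext}(D)$, padding with zero weights if fewer than $n$ terms arise. Multiplying through by $\|\hat\nu\|_{\mathcal{B}_*}$ and setting $\gamma_{j}:=\|\hat\nu\|_{\mathcal{B}_*}\tilde\gamma_{j}$ yields the representation \eqref{eq: expansion of p}.
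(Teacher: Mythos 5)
Your proof is correct, but it reaches the conclusion by a genuinely different route from the paper's in the key counting step. You establish the identity $\mathrm{S}(\mathbf{y})=(\|\hat\nu\|_{\mathcal{B}_*}\partial\|\cdot\|_{\mathcal{B}_*}(\hat\nu))\cap\mathcal{M}_{\mathbf{y}}$ directly from the duality bound and the constancy of $\langle\hat\nu,\cdot\rangle_{\mathcal{B}}$ on $\mathcal{M}_{\mathbf{y}}$, whereas the paper obtains it (Proposition \ref{prop: representation_solution_set}) by invoking Theorem 12 of \cite{wang2021representer} together with Lemma \ref{prop: duality mappings are same}. More substantially, to improve the a priori bound of $n+1$ extreme points to $n$, the paper applies Dubins' theorem in $\mathcal{B}$ with all $n$ hyperplanes, then uses Lemma \ref{linear_dependent} and a perturbation argument to show that the $n+1$ points obtained span a space of dimension at most $n-1$, and finishes with the Minkowski--Carath\'eodory theorem; you instead use the same geometric fact at the outset---that $\partial\|\cdot\|_{\mathcal{B}_*}(\hat\nu)$ lies in the hyperplane $H=\{f\in\mathcal{B}:\langle\hat\nu,f\rangle_{\mathcal{B}}=\|\hat\nu\|_{\mathcal{B}_*}\}$---to note that on $H$ any one constraint with $c_{j_0}\neq0$ in $\hat\nu=\sum_j c_j\nu_j$ is implied by the remaining $n-1$ together with $\sum_j c_j y_j=\|\hat\nu\|_{\mathcal{B}_*}^2$, so Dubins' theorem applied inside $H$ with at most $n-1$ hyperplanes gives the $n$-term representation in one step. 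Both arguments are sound: the paper's is more modular (it reuses the known representer theorem and needs Dubins only in the ambient space), while yours is shorter and makes explicit where the gain from $n+1$ to $n$ comes from, namely the redundancy of one data constraint on $H$. Two small points you should state explicitly: $\hat\nu\neq0$, which follows from \eqref{Non-empty-set} and $\mathbf{y}\neq\mathbf{0}$ and is needed both for \eqref{subdifferential = norming functional} and for dividing by $\|\hat\nu\|_{\mathcal{B}_*}$; and when you pad with zero weights you must still supply elements $u_j\in\mathrm{ext}\left(\partial\|\cdot\|_{\mathcal{B}_*}(\hat\nu)\right)$ for the padded indices, e.g.\ by repeating an extreme point already produced (the $u_j$ in the statement need not be distinct).
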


Two remarks about this explicit representer theorem are in order. First, a representer theorem was proposed in \cite{boyer2019representer,bredies2020sparsity} based on which additional results were obtained in \cite{bartolucci2023understanding, lin2020sparse, unser2022convex}. While specializing the result in \cite{boyer2019representer} to the MNI problem \eqref{MNI} in a Banach space which has a pre-dual space, any $\hat f\in\mathrm{ext}(\rm{S}({\mathbf{y}}))$ can be represented as in \eqref{eq: expansion of p} with $\gamma_j\in\mathbb{R}$, $j\in\mathbb{N}_n$  and $u_j\in\mathrm{ext}(B_0)$, $j\in\mathbb{N}_n$, where $B_0$ denotes the closed unit ball of $\mathcal{B}$ with center in the origin. This representation does not depend on the given data that define the MNI problem \eqref{MNI}. Proposition \ref{theorem: representer for MNI} strengthens the representer theorem of \cite{boyer2019representer} by specifying the elements $u_j$, $j\in\mathbb{N}_{n}$, used to represent the extreme points of  $\rm{S}({\mathbf{y}})$, to belong to the data-dependent set $\mathrm{ext}(\partial\|\cdot\|_{\mathcal{B}_*}(\hat\nu))$, where $\hat \nu$ is some linear combination of the given functionals $\nu_j$, $j\in\mathbb{N}_n$. Moreover, as shown in Proposition \ref{prop: ext is smaller} of Appendix A, the set $\mathrm{ext}(\partial\|\cdot\|_{\mathcal{B}_*}(\hat\nu))$ is indeed smaller than the set $\mathrm{ext}(B_0)$. In Section 5, we will show that in the special case that $\mathcal{B}:=\ell_1(\mathbb{N})$, the set $\mathrm{ext}(B_0)$ includes
infinitely many elements while $\mathrm{ext}(\partial\|\cdot\|_{\mathcal{B}_*}(\hat\nu))$ has only finite elements. In a word, 
Proposition \ref{theorem: representer for MNI} provides a more precise characterization for $u_j$, $j\in\mathbb{N}_{n}$ by using the given data. Second, observing from Proposition \ref{theorem: representer for MNI}, the element $\hat\nu\in\mathcal{V}$ satisfying \eqref{Non-empty-set} plays an important role in the representation \eqref{eq: expansion of p} of the extreme points of $\rm{S}({\mathbf{y}})$. To characterize such an element, we establish in Appendix B a dual problem for the MNI problem \eqref{MNI}.  Proposition \ref{prop: solution of dual problem gives solution of original problem} in Appendix B demonstrates that the element $\hat\nu$ can be obtained by solving the associated dual problem.

The explicit representer theorem enables us to establish sparse kernel representations for solutions of the MNI problem \eqref{MNI} in certain RKBSs. In the remaining part of this section, we always assume that $\mathcal{B}$ and $\mathcal{B}'$ are a pair of RKBSs and $K:X\times X'\rightarrow \mathbb{R}$ is the reproducing kernel for $\mathcal{B}$. In addition, we suppose that $\mathcal{B}_*$ is a pre-dual space of $\mathcal{B}$ and $\nu_{j}$, $j\in \mathbb{N}_{n}$, are linearly independent elements in $\mathcal{B}_*$.  Proposition \ref{theorem: representer for MNI} ensures that any extreme point of the solution set $\rm{S}(\mathbf{y})$ of problem \eqref{MNI} with $\mathbf{y}\in \mathbb{R}^{n}\backslash\{\mathbf{0}\}$ can be expressed by the linear combination of elements in the set $\mathrm{ext}(\partial\|\cdot\|_{\mathcal{B}_*}(\hat\nu))$ for some $\hat\nu\in\mathcal{V}$. In order to promote sparse kernel representations for the solutions of the MNI problem, it is intuitive to require that the elements in $\mathrm{ext}(\partial\|\cdot\|_{\mathcal{B}_*}(\nu))$, the building blocks of the solution set, are as sparse as possible. For this purpose, we require that the RKBS $\mathcal{B}$ and the functionals $\nu_j\in\mathcal{B}_*,$ $j\in\mathbb{N}_n$, satisfy the following assumption.

(A1) For any nonzero $\nu\in\mathcal{V}$, there exists a finite subset $X_{{\nu}}'$ of $ X'$ such that
\begin{equation*}
    \mathrm{ext}(\partial\|\cdot\|_{\mathcal{B}_*}({\nu}))\subset\left\{-K(\cdot,x'), K(\cdot,x'):x'\in X_{{\nu}}'\right\}. 
\end{equation*} 

Under Assumption (A1), we can express any extreme point of the solution set $\rm{S}({\mathbf{y}})$ as a linear combination of the kernel sessions. We impose an additional assumption on the RKBS $\mathcal{B}$ by relating its norm with the well-known sparsity-promoting norm $\|\cdot\|_1$. 

(A2) There exists a positive constant $C$ such that for any $m\in\mathbb{N}$, distinct points $x_j'\in X'$, $j\in\mathbb{N}_m$, and $\bm{\alpha}=[\alpha_j:j\in\mathbb{N}_m]\in\mathbb{R}^m$, there holds 
\begin{equation*}
\left\|\sum\limits_{j\in\mathbb{N}_m}\alpha_j K(\cdot, x_j')\right\|_{\mathcal{B}}=C\|\bm{\alpha}\|_1.
\end{equation*}

We now turn to establishing sparse kernel representations for the solutions of the MNI problem \eqref{MNI} under the above assumptions. To this end, we introduce a finite dimensional MNI problem. Suppose that Assumption (A1) holds and $\hat{\nu}\in\mathcal{V}$ satisfies \eqref{Non-empty-set} with $\mathbf{y}\in\mathbb{R}^n$. We denote by $n(\hat\nu)$ the cardinality of the set $X_{\hat\nu}'$ and suppose that \begin{equation}\label{extreme_point_set}
X_{\hat\nu}':=\left\{x_j':j\in\mathbb{N}_{n(\hat{\nu})}\right\}.
\end{equation} 
By defining a matrix 
\begin{equation}\label{finite matrix V hat nu}
    \mathbf{L}_{\hat\nu}:=[\langle \nu_i, K(\cdot,x_j')\rangle_{\mathcal{B}}:\ i\in\mathbb{N}_n,\ j\in\mathbb{N}_{n(\hat{\nu})}]\in\mathbb{R}^{n\times n(\hat\nu)},
\end{equation} 
we introduce the finite dimensional MNI problem by
\begin{equation}\label{RKBS finite d MNI section 2}
    \inf \left\{\|\bm{\alpha}\|_{1}:\mathbf{L}_{\hat{\nu}}\bm{\alpha}=\mathbf{y},\ \bm{\alpha}\in\mathbb{R}^{n(\hat{\nu})}\right\},
\end{equation}
and denote by $\rm{S}_{\hat\nu}(\mathbf{y})$ the solution set of problem \eqref{RKBS finite d MNI section 2} with $\mathbf{y}\in\mathbb{R}^n$. The next result concerns the
sparsity of the elements in $\rm{S}_{\hat\nu}(\mathbf{y})$.

\begin{proposition}\label{theorem: finite dimensional l1 MNI section 2}

Suppose that $\mathbf{y}\in \mathbb{R}^{n}\backslash\{\mathbf{0}\}$, $\mathcal{V}$ and $\mathcal{M}_{\mathbf{y}}$ be defined by \eqref{V_span} and \eqref{hyperplane My}, respectively, and $\hat{\nu}\in\mathcal{V}$ satisfy \eqref{Non-empty-set}.
If Assumption (A1) holds and $\mathbf{L}_{\hat{\nu}}$ is defined by \eqref{finite matrix V hat nu}, then $\hat{\bm{\alpha}}\in\mathrm{ext}\left(\rm{S}_{\hat{\nu}}(\mathbf{y})\right)$ has at most $\mathrm{rank}(\mathbf{L}_{\hat{\nu}})$ nonzero components.
\end{proposition}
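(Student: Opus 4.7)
The plan is to run a classical perturbation argument for $\ell_1$ minimization subject to linear equality constraints, tailored to the extreme point characterization. The idea is that if the support of $\hat{\bm{\alpha}}$ is too large, the columns of $\mathbf{L}_{\hat{\nu}}$ restricted to that support must be linearly dependent, which produces a free direction along which $\hat{\bm{\alpha}}$ can be split while staying feasible and (as we will see) optimal.

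I would argue by contradiction. Suppose $\hat{\bm{\alpha}} \in \mathrm{ext}(\mathrm{S}_{\hat{\nu}}(\mathbf{y}))$ has support $J \subseteq \mathbb{N}_{n(\hat{\nu})}$ with $|J| > r := \mathrm{rank}(\mathbf{L}_{\hat{\nu}})$. Then the columns of $\mathbf{L}_{\hat{\nu}}$ indexed by $J$ are linearly dependent, so there exists a nonzero $\bm{\beta} \in \mathbb{R}^{n(\hat{\nu})}$ whose support lies in $J$ with $\mathbf{L}_{\hat{\nu}}\bm{\beta} = \mathbf{0}$. Since the components of $\hat{\bm{\alpha}}$ on $J$ are bounded away from zero, for all sufficiently small $\epsilon > 0$ the signs of the components of $\hat{\bm{\alpha}} \pm \epsilon \bm{\beta}$ agree on $J$ with those of $\hat{\bm{\alpha}}$, and outside $J$ both perturbations vanish. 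A direct computation then yields
\[
\|\hat{\bm{\alpha}} \pm \epsilon \bm{\beta}\|_1 = \|\hat{\bm{\alpha}}\|_1 \pm \epsilon\, \mathrm{sign}(\hat{\bm{\alpha}})^{\top} \bm{\beta},
\]
while $\mathbf{L}_{\hat{\nu}}(\hat{\bm{\alpha}} \pm \epsilon \bm{\beta}) = \mathbf{y}$ keeps both vectors feasible for \eqref{RKBS finite d MNI section 2}.

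Two cases arise. If $\mathrm{sign}(\hat{\bm{\alpha}})^{\top}\bm{\beta} \neq 0$, choosing the sign opposite to that scalar produces a feasible vector with $\ell_1$ norm strictly less than $\|\hat{\bm{\alpha}}\|_1$, contradicting the optimality of $\hat{\bm{\alpha}}$. If on the other hand $\mathrm{sign}(\hat{\bm{\alpha}})^{\top}\bm{\beta} = 0$, then both $\hat{\bm{\alpha}} \pm \epsilon\bm{\beta}$ lie in $\mathrm{S}_{\hat{\nu}}(\mathbf{y})$, and since they are distinct and have $\hat{\bm{\alpha}}$ as their midpoint, $\hat{\bm{\alpha}}$ cannot be an extreme point of $\mathrm{S}_{\hat{\nu}}(\mathbf{y})$. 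Either conclusion contradicts the assumptions, completing the proof. The only delicate point is selecting $\bm{\beta}$ to be supported inside $J$ and $\epsilon$ small enough that the $\ell_1$ norm is genuinely linear in $\epsilon$ along $\pm\bm{\beta}$; once that is arranged, the rest of the argument is essentially automatic.
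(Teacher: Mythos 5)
Your proof is correct, but it takes a genuinely different route from the paper. You give a direct, self-contained perturbation argument: if the support $J$ of an extreme point exceeds $\mathrm{rank}(\mathbf{L}_{\hat{\nu}})$, the corresponding columns are dependent, a kernel direction $\bm{\beta}$ supported in $J$ exists, and for small $\epsilon$ the map $\epsilon\mapsto\|\hat{\bm{\alpha}}\pm\epsilon\bm{\beta}\|_1$ is affine, so either optimality (when $\mathrm{sign}(\hat{\bm{\alpha}})^{\top}\bm{\beta}\neq 0$) or extremality (when it vanishes) is violated; both case splits are handled correctly, and the only hypotheses you actually use are the definitions of $\mathbf{L}_{\hat{\nu}}$ and $\rm{S}_{\hat{\nu}}(\mathbf{y})$. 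The paper instead deduces the result from its abstract explicit representer theorem (Proposition \ref{theorem: representer for MNI}) applied to $\widetilde{\mathcal{B}}:=\mathbb{R}^{n(\hat{\nu})}$ with the $\ell_1$ norm and pre-dual $\ell_\infty$: it takes the rows of $\mathbf{L}_{\hat{\nu}}$ as the functionals, passes to a maximal linearly independent subset of them (so $\dim(\widetilde{\mathcal{V}})=\mathrm{rank}(\mathbf{L}_{\hat{\nu}})$), chooses a dual element $\widetilde{\nu}$ with the non-empty intersection property, and uses that each extreme point of $\partial\|\cdot\|_\infty(\widetilde{\nu})$ is a signed coordinate vector, so every extreme point of $\rm{S}_{\hat{\nu}}(\mathbf{y})$ is a combination of at most $\mathrm{rank}(\mathbf{L}_{\hat{\nu}})$ such vectors. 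What the paper's route buys is uniformity: the finite-dimensional statement is exhibited as a special case of the same machinery used throughout, at the cost of needing the existence of a suitable $\widetilde{\nu}$ (obtained via the dual problem in Appendix B). What your route buys is elementarity and fewer hypotheses — it is the classical linear-programming/compressed-sensing argument and does not invoke Proposition \ref{theorem: representer for MNI}, Assumption (A1), or condition \eqref{Non-empty-set} at all.
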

\begin{proof} 
We prove this result by employing Proposition \ref{theorem: representer for MNI}. In this case, the Banach space $\widetilde{\mathcal{B}}:=\mathbb{R}^{n(\hat{\nu})}$ endowed with the $\ell_{1}$ norm has the pre-dual space $\widetilde{\mathcal{B}}_{*}:=\mathbb{R}^{n(\hat{\nu})}$ endowed with the $\ell_{\infty}$ norm. In addition, for each $j\in\mathbb{N}_n$, the element $\widetilde{\nu}_j\in\widetilde{\mathcal{B}}_*$ is chosen as the $j$-th row of $\mathbf{L}_{\hat{\nu}}$. For $\mathbf{y}:=[y_j:j\in\mathbb{N}_n]$, we set  $\widetilde{\mathcal{M}}_{\mathbf{y}}:=\{\bm{\alpha}\in\widetilde{\mathcal{B}}: \left\langle\widetilde{\nu}_{j}, \bm{\alpha}\right\rangle_{\widetilde{\mathcal{B}}}=y_j, j\in\mathbb{N}_n\}$. If $\widetilde{\nu}_j\in\widetilde{\mathcal{B}}_*$, $j\in\mathbb{N}_n$, are linearly dependent, we may select a maximal linearly independent subset $\{\widetilde{\nu}_{n_j}:j\in\mathbb{N}_{\mathrm{dim}(\widetilde{\mathcal{V}})}\}$ with $\widetilde{\mathcal{V}}:=\mathrm{span}\{\widetilde{\nu}_j:j\in\mathbb{N}_n\}$. For $\mathbf{y}':=[y_{n_j}:j\in\mathbb{N}_{\mathrm{dim}(\widetilde{\mathcal{V}})}]\in\mathbb{R}^{\mathrm{dim}(\widetilde{\mathcal{V}})}$, we define $$
\widetilde{\mathcal{M}}_{\mathbf{y}'}:=\left\{\bm{\alpha}\in\widetilde{\mathcal{B}}: \left\langle\widetilde{\nu}_{n_j}, \bm{\alpha}\right\rangle_{\widetilde{\mathcal{B}}}=y_{n_j}, j\in\mathbb{N}_{\mathrm{dim}(\widetilde{\mathcal{V}})}\right\}.
$$
It is obvious that if $\widetilde{\mathcal{M}}_{\mathbf{y}}$ is nonempty, then $\widetilde{\mathcal{M}}_{\mathbf{y}}$ and $\widetilde{\mathcal{M}}_{\mathbf{y}'}$ are exactly the same. This allows us to consider an equivalent MNI problem $\inf \{\|\bm{\alpha}\|_{\widetilde{\mathcal{B}}}: \bm{\alpha} \in \widetilde{\mathcal{M}}_{\mathbf{y}'}\}$, in which the given functionals are linearly independent. 
 
 By choosing $\widetilde{\nu}\in\widetilde{\mathcal{V}}$ satisfying $\|\widetilde{\nu}\|_{\widetilde{\mathcal{B}}_*}\partial\|\cdot\|_{\widetilde{\mathcal{B}}_*}(\widetilde{\nu})\cap\widetilde{\mathcal{M}}_{\mathbf{y}}\neq\emptyset$, Proposition \ref{theorem: representer for MNI} ensures that any extreme point $\hat{\bm{\alpha}}$ of the solution set $\rm{S}_{\hat{\nu}}(\mathbf{y})$ can be represented as a linear combination of at most  $\mathrm{dim}(\widetilde{\mathcal{V}})$ extreme points of $\partial\|\cdot\|_{\widetilde{\mathcal{B}}_*}(\widetilde{\nu})$. Then the desired result follows from the fact that $\mathrm{dim}(\widetilde{\mathcal{V}})=\mathrm{rank}(\mathbf{L}_{\hat{\nu}})$ and each extreme points of $\partial\|\cdot\|_{\widetilde{\mathcal{B}}_*}(\widetilde{\nu})$ has just one nonzero component.     
\end{proof}

Empirical results show that the MNI problem 
with the $\ell_1$ norm can promote sparsity of a solution. Proposition \ref{theorem: finite dimensional l1 MNI section 2} provides a theoretical characterization of the sparsity of the solutions of problem \eqref{RKBS finite d MNI section 2}. In fact, the sparsity can be further characterized by the  positional relationship between the hyperplanes constructed by the given data and the unit ball of $\mathbb{R}^{n(\hat{\nu})}$ under the $\ell_1$ norm.  Such relationship can be multifarious making the comprehensive analysis rather complicated. 

 Below, we reveal the relation between the solutions of problems \eqref{MNI} and \eqref{RKBS finite d MNI section 2}.

\begin{lemma}\label{lemma: relation between MNI and finite MNI section 2}

    Suppose that $\mathbf{y}\in \mathbb{R}^{n}\backslash\{\mathbf{0}\}$, $\mathcal{V}$ and $\mathcal{M}_{\mathbf{y}}$ be defined by \eqref{V_span} and \eqref{hyperplane My}, respectively, and $\hat{\nu}\in\mathcal{V}$ satisfy \eqref{Non-empty-set}. If Assumptions (A1) and (A2) hold,   
    $X_{\hat\nu}'$ and  $\mathbf{L}_{\hat{\nu}}$ are defined by \eqref{extreme_point_set} and \eqref{finite matrix V hat nu}, respectively, then the following statements hold.
    \begin{enumerate}
        \item $\hat{f}:=\sum_{j\in\mathbb{N}_{n(\hat{\nu})}}\hat\alpha_j K(\cdot,x_j')\in\rm{S}(\mathbf{y})$ if and only if $\hat{\bm{\alpha}}:=[\hat\alpha_j:j\in\mathbb{N}_{n(\hat{\nu})}]\in\rm{S}_{\hat\nu}(\mathbf{y})$.
        
        \item If $\hat{f}:=\sum_{j\in\mathbb{N}_{n(\hat{\nu})}}\hat\alpha_j K(\cdot,x_j')\in\mathrm{ext}(\rm{S}(\mathbf{y}))$, then $\hat{\bm{\alpha}}:=[\hat\alpha_j:j\in\mathbb{N}_{n(\hat{\nu})}]\in\mathrm{ext}(\rm{S}_{\hat\nu}(\mathbf{y}))$.
    \end{enumerate}
\end{lemma}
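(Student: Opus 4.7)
The plan hinges on the two identities supplied by Assumption (A2): the isometry
\[
\Bigl\|\sum_{j\in\mathbb{N}_{n(\hat{\nu})}}\alpha_{j}K(\cdot,x_{j}')\Bigr\|_{\mathcal{B}}=C\|\bm{\alpha}\|_{1},
\]
and the matrix identity $\mathcal{L}(\sum_{j}\alpha_{j}K(\cdot,x_{j}'))=\mathbf{L}_{\hat{\nu}}\bm{\alpha}$ read off directly from the definition \eqref{finite matrix V hat nu}. Together these set up a norm-preserving correspondence between coefficient vectors feasible for \eqref{RKBS finite d MNI section 2} and kernel expansions feasible for \eqref{MNI}. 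The bridge between the two minima is furnished by Proposition \ref{theorem: representer for MNI} combined with Assumption (A1).

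For the forward direction of item 1, assume $\hat{\bm{\alpha}}\in\rm{S}_{\hat{\nu}}(\mathbf{y})$. The matrix identity places $\hat{f}$ in $\mathcal{M}_{\mathbf{y}}$. To show optimality I would pick any $\hat{g}\in\mathrm{ext}(\rm{S}(\mathbf{y}))$, which exists since $\rm{S}(\mathbf{y})$ is nonempty, convex and weak${}^{*}$-compact. By Proposition \ref{theorem: representer for MNI} and (A1), $\hat{g}$ can be rewritten as $\sum_{j\in\mathbb{N}_{n(\hat{\nu})}}\tilde{\gamma}_{j}K(\cdot,x_{j}')$ for some $\tilde{\bm{\gamma}}\in\mathbb{R}^{n(\hat{\nu})}$, so $\tilde{\bm{\gamma}}$ is feasible for \eqref{RKBS finite d MNI section 2} and
\[
\|\hat{g}\|_{\mathcal{B}}=C\|\tilde{\bm{\gamma}}\|_{1}\geq C\|\hat{\bm{\alpha}}\|_{1}=\|\hat{f}\|_{\mathcal{B}}.
\]
Since $\hat{g}$ attains the minimum norm over $\mathcal{M}_{\mathbf{y}}$, this forces $\hat{f}\in\rm{S}(\mathbf{y})$. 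The reverse direction is essentially a direct substitution: any $\tilde{\bm{\alpha}}$ with $\mathbf{L}_{\hat{\nu}}\tilde{\bm{\alpha}}=\mathbf{y}$ and $\|\tilde{\bm{\alpha}}\|_{1}<\|\hat{\bm{\alpha}}\|_{1}$ would produce $\tilde{f}:=\sum_{j}\tilde{\alpha}_{j}K(\cdot,x_{j}')\in\mathcal{M}_{\mathbf{y}}$ of strictly smaller $\mathcal{B}$-norm, contradicting $\hat{f}\in\rm{S}(\mathbf{y})$.

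Item 2 leans on the linear independence of the sections $\{K(\cdot,x_{j}'):j\in\mathbb{N}_{n(\hat{\nu})}\}$, which is itself a by-product of (A2): if $\sum_{j}\beta_{j}K(\cdot,x_{j}')=0$ then $C\|\bm{\beta}\|_{1}=0$, so $\bm{\beta}=\mathbf{0}$. Assume $\hat{\bm{\alpha}}=(\bm{\alpha}^{(1)}+\bm{\alpha}^{(2)})/2$ with $\bm{\alpha}^{(i)}\in\rm{S}_{\hat{\nu}}(\mathbf{y})$. By item 1 the functions $f^{(i)}:=\sum_{j}\alpha_{j}^{(i)}K(\cdot,x_{j}')$ belong to $\rm{S}(\mathbf{y})$, and by construction $(f^{(1)}+f^{(2)})/2=\hat{f}$. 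Extremality of $\hat{f}$ forces $f^{(1)}=f^{(2)}=\hat{f}$, whereupon the linear independence observation forces $\bm{\alpha}^{(1)}=\bm{\alpha}^{(2)}=\hat{\bm{\alpha}}$.

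The main obstacle is the forward direction of item 1: one must argue that the minimum norm over the infinite-dimensional set $\mathcal{M}_{\mathbf{y}}$ is already achieved inside the finite-dimensional subspace $\mathrm{span}\{K(\cdot,x_{j}'):j\in\mathbb{N}_{n(\hat{\nu})}\}$, even though $\mathcal{M}_{\mathbf{y}}$ itself may contain many functions outside this subspace. The resolution is precisely that Proposition \ref{theorem: representer for MNI} together with (A1) drives extreme points of $\rm{S}(\mathbf{y})$ into this subspace; only then does the $\ell_{1}$ isometry in (A2) allow the numerical comparison with the finite-dimensional minimum to go through.
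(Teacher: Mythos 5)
Your proposal is correct and follows essentially the same route as the paper: both directions of statement 1 rest on the (A2) isometry plus the identity $\mathcal{L}(\sum_j\alpha_jK(\cdot,x_j'))=\mathbf{L}_{\hat\nu}\bm{\alpha}$, with the harder implication handled by comparing against an extreme point of $\rm{S}(\mathbf{y})$ expanded via Proposition \ref{theorem: representer for MNI} and (A1), and statement 2 is the same lifting argument, your linear-independence remark being just the paper's application of (A2) to the difference of the two coefficient vectors.
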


\begin{proof}
     For any $\bm{\alpha}:=[\alpha_j:j\in\mathbb{N}_{n(\hat{\nu})}]\in\mathbb{R}^{n(\hat{\nu})}$, we set $f:=\sum_{j\in\mathbb{N}_{n(\hat{\nu})}}\alpha_j K(\cdot,x_j')$. It follows from Assumption (A2) that \begin{equation}\label{f-and-alpha-1}    \|f\|_{\mathcal{B}}=C\|\bm{\alpha}\|_1.
     \end{equation}
     By definition \eqref{finite matrix V hat nu} of the matrix $\mathbf{L}_{\hat{\nu}}$, we have that 
     $$     \mathbf{L}_{\hat\nu}\bm{\alpha}=\left[\sum_{j\in\mathbb{N}_{n(\hat{\nu})}}\alpha_j\langle \nu_i, K(\cdot,x_j')\rangle_{\mathcal{B}}:i\in\mathbb{N}_n\right].
     $$
     This together with definition \eqref{operator L} of the operator $\mathcal{L}$ leads to \begin{equation}\label{f-and-alpha-2} \mathbf{L}_{\hat\nu}\bm{\alpha}=\mathcal{L}(f).
    \end{equation} 
     
     We first prove statement 1. Suppose that $\hat{f}:=\sum_{j\in\mathbb{N}_{n(\hat{\nu})}}\hat\alpha_j K(\cdot,x_j')\in\rm{S}(\mathbf{y})$ and set $\hat{\bm{\alpha}}:=[\hat\alpha_j:j\in\mathbb{N}_{n(\hat{\nu})}]$. It follows that $\mathcal{L}(\hat{f})=\mathbf{y}$. This together with equation \eqref{f-and-alpha-2} with $\bm{\alpha}:=\hat{\bm{\alpha}}$ and $f:=\hat f$ leads to $\mathbf{L}_{\hat{\nu}}\hat{\bm{\alpha}}=\mathbf{y}$. To verify $\hat{\bm{\alpha}}\in\rm{S}_{\hat\nu}(\mathbf{y})$, it suffices to show that $\left\|\hat{\bm{\alpha}}\right\|_1\leq\|\bm\alpha\|_1$ for any $\bm{\alpha}\in\mathbb{R}^{n(\hat{\nu})}$ satisfying $\mathbf{L}_{\hat\nu}\bm{\alpha}=\mathbf{y}$. Suppose that $\bm{\alpha}:=[\alpha_j:j\in\mathbb{N}_{n(\hat{\nu})}]\in\mathbb{R}^{n(\hat{\nu})}$ satisfying $\mathbf{L}_{\hat\nu}\bm{\alpha}=\mathbf{y}$. By equation \eqref{f-and-alpha-2}, the function $f:=\sum_{j\in\mathbb{N}_{n(\hat{\nu})}}\alpha_j K(\cdot,x_j')$ satisfies $\mathcal{L}(f)=\mathbf{y}.$ That is, $f\in\mathcal{M}_{\mathbf{y}}$. This combined with $\hat{f}\in\rm{S}(\mathbf{y})$ leads to $\|\hat{f}\|_{\mathcal{B}}\leq\|f\|_{\mathcal{B}}$. Substituting equation \eqref{f-and-alpha-1} with the pair $f,\bm\alpha$ and the same equation with the pair $\hat{f}, \hat{\bm{\alpha}}$ into the above inequality, we conclude that $\left\|\hat{\bm{\alpha}}\right\|_1\leq\|\bm\alpha\|_1$. Conversely, suppose that $\hat{\bm{\alpha}}:=[\hat\alpha_j:j\in\mathbb{N}_{n(\hat{\nu})}]\in\rm{S}_{\hat\nu}(\mathbf{y})$ and set $\hat{f}:=\sum_{j\in\mathbb{N}_{n(\hat{\nu})}}\hat\alpha_j K(\cdot,x_j')$. It follows from $\mathbf{L}_{\hat{\nu}}\hat{\bm{\alpha}}=\mathbf{y}$ and equation \eqref{f-and-alpha-2} with $\bm{\alpha}:=\hat{\bm{\alpha}}$, $f:=\hat f$ that $\mathcal{L}(\hat f)=\mathbf{y}$. That is $\hat f\in\mathcal{M}_{\mathbf{y}}$.  We choose $f\in\mathrm{ext}(\rm{S}(\mathbf{y}))$ and proceed to show that $\|\hat{f}\|_{\mathcal{B}}\leq\|f\|_{\mathcal{B}}$. Combining Proposition \ref{theorem: representer for MNI} with Assumption (A1), we represent $f$ as $f=\sum_{j\in\mathbb{N}_{n(\hat{\nu})}}\alpha_j K(\cdot,x_j')$ for some 
   $\bm{\alpha}:=[\alpha_j:j\in\mathbb{N}_{n(\hat{\nu})}]\in\mathbb{R}^{n(\hat{\nu})}$.
   Equation \eqref{f-and-alpha-2} ensures that  $\mathbf{L}_{\hat{\nu}}\bm{\alpha}=\mathbf{y}$, which together with  $\hat{\bm{\alpha}}\in\rm{S}_{\hat\nu}(\mathbf{y})$ leads to $\left\|\hat{\bm{\alpha}}\right\|_1\leq\|\bm\alpha\|_1$. Again substituting  equation \eqref{f-and-alpha-1} with the pair $f,\bm\alpha$ and the same equation with the pair $\hat{f}, \hat{\bm{\alpha}}$ into the above inequality, we obtain that  $\|\hat{f}\|_{\mathcal{B}}\leq\|f\|_{\mathcal{B}}$. By noting that $f\in\rm{S}(\mathbf{y})$, we get the conclusion that  $\hat{f}\in\rm{S}(\mathbf{y})$.
  
   We next verify statement 2. Suppose that $\hat f:=\sum_{j\in\mathbb{N}_{n(\hat{\nu})}}\hat\alpha_j K(\cdot, x_j')\in\mathrm{ext}(\rm{S}(\mathbf{y}))$. Statement 1 of this lemma ensures that $\hat{\bm{\alpha}}:=[\hat\alpha_j:j\in\mathbb{N}_{n(\hat{\nu})}]\in\rm{S}_{\hat\nu}(\mathbf{y})$. According to the definition of extreme points, it suffices to prove that whenever $\hat{\bm{\beta}}:=[\hat\beta_j:j\in\mathbb{N}_{n(\hat{\nu})}]\in\rm{S}_{\hat{\nu}}(\mathbf{y})$ and $\hat{\bm{\gamma}}:=[\hat\gamma_j:j\in\mathbb{N}_{n(\hat{\nu})}]\in\rm{S}_{\hat{\nu}}(\mathbf{y})$ satisfying $\hat{\bm{\alpha}}=(\hat{\bm{\beta}}+\hat{\bm{\gamma}})/2$, we have $\hat{\bm{\beta}}=\hat{\bm{\gamma}}$. Set $\hat g:=\sum_{j\in\mathbb{N}_{n(\hat{\nu})}}\hat\beta_j K(\cdot, x_j')$ and $\hat h:=\sum_{j\in\mathbb{N}_{n(\hat{\nu})}}\hat\gamma_j K(\cdot, x_j')$. Clearly,  $\hat{f}=(\hat{g}+\hat{h})/2$. Again using statement 1 of this lemma, we obtain that $\hat{g},\hat{h}\in\rm{S}(\mathbf{y})$. Combining $f\in\mathrm{ext}(\rm{S}(\mathbf{y}))$ with the definition of extreme points, we get that $\hat{g}=\hat{h}$. 
   It follows from Assumption (A2) that  $\|\hat{\bm{\beta}}-\hat{\bm{\gamma}}\|_1=\|\hat g-\hat h\|_{\mathcal{B}}/C=0$. Thus, $\hat{\bm{\beta}}=\hat{\bm{\gamma}}$, which complets the proof.
\end{proof}

Combining the relation between the solutions of problems \eqref{MNI} and \eqref{RKBS finite d MNI section 2} and the sparsity characterization of the latter, we are ready to provide sparse kernel representations for the solutions of the MNI problem \eqref{MNI}.

\begin{theorem}\label{theorem: sparser representer for MNI}

   Suppose that $\mathbf{y}\in \mathbb{R}^{n}\backslash\{\mathbf{0}\}$, $\mathcal{V}$ and $\mathcal{M}_{\mathbf{y}}$ are defined by \eqref{V_span} and \eqref{hyperplane My}, respectively, and $\hat{\nu}\in\mathcal{V}$ satisfy \eqref{Non-empty-set}. If Assumptions (A1) and (A2) hold with a positive constant $C$, $X_{\hat\nu}'$ and $\mathbf{L}_{\hat{\nu}}$ are defined by \eqref{extreme_point_set} and \eqref{finite matrix V hat nu}, respectively, then for any $\hat f\in\mathrm{ext}\left(\rm{S}({\mathbf{y}})\right)$, there exist $\hat{\alpha}_j\neq 0$, $j\in\mathbb{N}_{M}$, with  $\sum_{j\in\mathbb{N}_{M}}|\hat{\alpha}_j|=\|\hat\nu\|_{\mathcal{B}_*}/C$ and $x_j'\in X_{\hat{\nu}}'$, $j\in\mathbb{N}_{M}$, such that \begin{equation}\label{eq: RKBS expansion of p}
        \hat f=\sum\limits_{j\in\mathbb{N}_{M}} \hat{\alpha}_j K(\cdot, x_j'),
    \end{equation}
    for some positive integer ${M}\leq \mathrm{rank}(\mathbf{L}_{\hat{\nu}})$.
\end{theorem}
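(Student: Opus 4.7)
The plan is to string together the three main pieces already developed: the explicit representer theorem (Proposition \ref{theorem: representer for MNI}), the finite-dimensional sparsity bound (Proposition \ref{theorem: finite dimensional l1 MNI section 2}), and the passage between problems \eqref{MNI} and \eqref{RKBS finite d MNI section 2} (Lemma \ref{lemma: relation between MNI and finite MNI section 2}). First I would invoke Proposition \ref{theorem: representer for MNI} for $\hat f \in \mathrm{ext}(\mathrm{S}(\mathbf{y}))$, which yields coefficients $\gamma_j \geq 0$ summing to $\|\hat\nu\|_{\mathcal{B}_*}$ and elements $u_j \in \mathrm{ext}(\partial\|\cdot\|_{\mathcal{B}_*}(\hat\nu))$ with $\hat f = \sum_{j\in\mathbb{N}_n}\gamma_j u_j$. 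Assumption (A1) then forces each $u_j$ to be of the form $\pm K(\cdot,x_j')$ for some $x_j'\in X_{\hat\nu}'$.

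Next I would collect like terms according to the finitely many points in $X_{\hat\nu}'=\{x_j':j\in\mathbb{N}_{n(\hat\nu)}\}$, rewriting $\hat f = \sum_{j\in\mathbb{N}_{n(\hat\nu)}}\alpha_j K(\cdot,x_j')$ with $\bm{\alpha}=[\alpha_j:j\in\mathbb{N}_{n(\hat\nu)}]\in\mathbb{R}^{n(\hat\nu)}$. Statement 2 of Lemma \ref{lemma: relation between MNI and finite MNI section 2} immediately places $\bm{\alpha}$ in $\mathrm{ext}(\mathrm{S}_{\hat\nu}(\mathbf{y}))$, and Proposition \ref{theorem: finite dimensional l1 MNI section 2} then guarantees that $\bm{\alpha}$ has at most $\mathrm{rank}(\mathbf{L}_{\hat\nu})$ nonzero entries. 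Discarding the zero coefficients and relabeling gives the representation \eqref{eq: RKBS expansion of p} with $M \leq \mathrm{rank}(\mathbf{L}_{\hat\nu})$ nonzero coefficients $\hat\alpha_j$ and points $x_j'\in X_{\hat\nu}'$.

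For the norm identity $\sum_{j\in\mathbb{N}_M}|\hat\alpha_j| = \|\hat\nu\|_{\mathcal{B}_*}/C$, I would combine Assumption (A2), which yields $\|\hat f\|_{\mathcal{B}} = C\sum_{j\in\mathbb{N}_M}|\hat\alpha_j|$ since the $x_j'$ are distinct, with the fact that $\hat f$ belongs to $\|\hat\nu\|_{\mathcal{B}_*}\partial\|\cdot\|_{\mathcal{B}_*}(\hat\nu)$. By the characterization \eqref{subdifferential = norming functional} of the norm subdifferential applied in the pre-dual space $\mathcal{B}_*$ (using $(\mathcal{B}_*)^*=\mathcal{B}$), every element of $\partial\|\cdot\|_{\mathcal{B}_*}(\hat\nu)$ has unit $\mathcal{B}$-norm, so $\|\hat f\|_{\mathcal{B}}=\|\hat\nu\|_{\mathcal{B}_*}$; equating the two expressions for $\|\hat f\|_{\mathcal{B}}$ gives the claimed sum.

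The main obstacle I anticipate is really bookkeeping rather than new mathematics: making sure that after grouping the terms $\pm\gamma_j K(\cdot,x_j')$ by distinct points in $X_{\hat\nu}'$ the resulting coefficient vector is still an extreme point of $\mathrm{S}_{\hat\nu}(\mathbf{y})$, so that the sparsity bound from Proposition \ref{theorem: finite dimensional l1 MNI section 2} actually applies. This is exactly what statement 2 of Lemma \ref{lemma: relation between MNI and finite MNI section 2} is designed to supply, provided one first argues via statement 1 that the grouped representation still lies in $\mathrm{S}(\mathbf{y})$. Once this extreme-point transfer is in hand, the remaining assertions about $M$ and the $\ell_1$ sum follow cleanly from (A2) and the duality formula $\|\hat f\|_{\mathcal{B}}=\|\hat\nu\|_{\mathcal{B}_*}$.
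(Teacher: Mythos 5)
Your proposal is correct and follows essentially the same route as the paper: Proposition \ref{theorem: representer for MNI} plus Assumption (A1) to get a kernel representation over $X_{\hat\nu}'$, statement 2 of Lemma \ref{lemma: relation between MNI and finite MNI section 2} to transfer extremality to the coefficient vector, Proposition \ref{theorem: finite dimensional l1 MNI section 2} for the rank bound, and (A2) together with $\|\hat f\|_{\mathcal{B}}=\|\hat\nu\|_{\mathcal{B}_*}$ for the coefficient sum. The only cosmetic difference is that you obtain $\|\hat f\|_{\mathcal{B}}=\|\hat\nu\|_{\mathcal{B}_*}$ from the membership $\hat f\in\|\hat\nu\|_{\mathcal{B}_*}\partial\|\cdot\|_{\mathcal{B}_*}(\hat\nu)$ and \eqref{subdifferential = norming functional}, whereas the paper cites Theorem 12 of \cite{wang2021representer} to identify $\|\hat\nu\|_{\mathcal{B}_*}$ as the infimum of \eqref{MNI}; also, your worry about needing statement 1 of the lemma after grouping terms is unnecessary, since the grouped expression represents the same function $\hat f$, which is an extreme point by hypothesis, so statement 2 applies directly.
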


\begin{proof}
    Suppose that $\hat f\in\mathrm{ext}\left(\rm{S}({\mathbf{y}})\right)$. By Proposition \ref{theorem: representer for MNI} and noting that Assumption (A1) holds, we represent $f$  as 
\begin{equation}\label{representation1}
    \hat f=\sum_{j\in\mathbb{N}_{n(\hat{\nu})}}\hat{\alpha}_j K(\cdot,x_j'),
\end{equation}
    for some  $\hat{\alpha}_j\in\mathbb{R}$, $x_j'\in X_{\hat{\nu}}'$, $j\in\mathbb{N}_{n(\hat{\nu})}$. Since $\hat f\in\mathrm{ext}(\rm{S}(\mathbf{y}))$, we get by statement 2 of Lemma \ref{lemma: relation between MNI and finite MNI section 2} that $\hat{\bm{\alpha}}:=[\hat{\alpha}_j:j\in\mathbb{N}_{n(\hat{\nu})}]\in\mathrm{ext}(\rm{S}_{\hat\nu}(\mathbf{y}))$. Proposition \ref{theorem: finite dimensional l1 MNI section 2} ensures that $\hat{\bm{\alpha}}$ has at most $\mathrm{rank}(\mathbf{L}_{\hat\nu})$ nonzero entries, which allows us to rewrite equation \eqref{representation1} as  
 \eqref{eq: RKBS expansion of p} with $\hat{\alpha}_j\neq 0$, $j\in\mathbb{N}_{M}$, for some positive integer ${M}\leq \mathrm{rank}(\mathbf{L}_{\hat{\nu}})$. It remains to show that   $\sum_{j\in\mathbb{N}_{M}}|\hat{\alpha}_j|=\|\hat\nu\|_{\mathcal{B}_*}/C$. It follows from  Assumption (A2) that 
 \begin{equation}\label{f-and-alpha_3}
 \sum_{j\in\mathbb{N}_{M}}|\hat{\alpha}_j|=\|\hat f\|_{\mathcal{B}}/C.
 \end{equation}
 Theorem 12 in \cite{wang2021representer} guarantees that if $\hat{\nu}\in\mathcal{V}$ satisfies \eqref{Non-empty-set}, then any $f\in(\|\hat\nu\|_{\mathcal{B}_*}\partial\|\cdot\|_{\mathcal{B}_*}(\hat\nu))\cap\mathcal{M}_{\mathbf{y}}$ is a solution of 
the MNI problem \eqref{MNI}. According to property \eqref{subdifferential = norming functional}, any $f\in(\|\hat\nu\|_{\mathcal{B}_*}\partial\|\cdot\|_{\mathcal{B}_*}(\hat\nu))\cap\mathcal{M}_{\mathbf{y}}$ satisfies that $\|f\|_{\mathcal{B}}=\|\hat\nu\|_{\mathcal{B}_*}$. That is, the infimum of the MNI problem \eqref{MNI} is $\|\hat\nu\|_{\mathcal{B}_*}$. By noting that $\hat f\in\mathrm{S}(\mathbf{y})$, we obtain that $\|\hat f\|_{\mathcal{B}}=\|\hat\nu\|_{\mathcal{B}_*}$. Substituting the above equation into equation \eqref{f-and-alpha_3}, we get that $\sum_{j\in\mathbb{N}_{M}}|\hat{\alpha}_j|=\|\hat\nu\|_{\mathcal{B}_*}/C$, which completes the proof.
\end{proof}

Theorem \ref{theorem: sparser representer for MNI} provides kernel representations, for the solutions of the MNI problem \eqref{MNI}, with the number of the kernel
sessions, which appear in the resulting representations,   being no more than the number $n$ of
the observed data. In Section 5, we will show by specific examples that $\mathrm{rank}(\mathbf{L}_{\hat\nu})$ is usually less than the number of
the data. Hence, Theorem \ref{theorem: sparser representer for MNI} may be taken as a sparse representer theorem for the solutions of the MNI problem. Such a sparse kernel representation profits from the fact that Assumptions (A1) and (A2) allow us to transform the original MNI problem to an equivalent finite dimensional MNI problem with the $\ell_1$ norm. As a result, a further characterization of the sparsity of the solutions of problem \eqref{RKBS finite d MNI section 2} may lead to a more precise sparsity of  the solutions of problem \eqref{MNI}.

\section{Sparse Representer Theorem for Regularization Problems}\label{sec: sparse representer theorem of regularization}
In this section, we establish a sparse representer theorem for regularization problems in the RKBS. This is done by translating the sparse representer theorem established in the last section for the MNI problem to regularization problems via the connection between the solutions of these problems. Unlike the MNI problem, the regularization problem involves a regularization parameter which allows us to further promote the sparsity level of the solution. Specifically, we convert the regularization problem in the infinite dimensional RKBS to a finite
dimensional one by using the sparse representer theorem. We then obtain choices of the regularization parameter for sparse solutions of the regularization problem in the RKBS by using the existing results for the finite dimensional regularization problem.

Throughout this section, we suppose that $\mathcal{B}$ and $\mathcal{B}'$ are a pair of RKBSs and $K:X\times X'\rightarrow \mathbb{R}$ is the reproducing kernel for $\mathcal{B}$. Let $\mathcal{B}_*$ be a pre-dual space of $\mathcal{B}$ and $\nu_{j}$, $j\in \mathbb{N}_{n}$, be linearly independent elements in $\mathcal{B}_*$. We also assume that for any given $\mathbf{y}\in\mathbb{R}^n$, both $\mathcal{Q}_{\mathbf{y}}:\mathbb{R}^{n} \rightarrow \mathbb{R}_{+}$ and $\varphi: \mathbb{R}_{+} \rightarrow \mathbb{R}_{+}$ are lower semi-continuous and moreover, $\varphi$ is increasing and coercive. It is known (\cite{unser2021unifying}) that the
solution set $\rm{R}(\mathbf{y})$  of the regularization problem \eqref{regularization problem special} with $\mathbf{y}\in\mathbb{R}^n$ and $\lambda>0$ is nonempty and weakly${}^*$ compact, and moreover, if both $\mathcal{Q}_{\mathbf{y}}$ and  $\varphi$ are convex, then $\rm{R}(\mathbf{y})$ is also convex. 

We begin with recalling the relation between a solution of the MNI problem \eqref{MNI} and that of the regularization problem \eqref{regularization problem special} which was put forward in \cite{wang2021representer}. Recalling $\mathcal{L}:\mathcal{B}\to \mathbb{R}^n$ defined by \eqref{operator L},
we introduce a subset $\mathcal{D}_{\lambda,\mathbf{y}}$ of $\mathbb{R}^n$ by  
\begin{equation}\label{Dy}
\mathcal{D}_{\lambda,\mathbf{y}}:=\mathcal{L}(\rm{R}(\mathbf{y})). 
\end{equation}
In this notation, Proposition 41 in \cite{wang2021representer} shows that  
\begin{equation}\label{Proposition41-in- WangXu1}
\bigcup_{{\mathbf{z}}\in\mathcal{D}_{\lambda,\mathbf{y}}}\rm{S}({\mathbf{z}})\subset \rm {R}(\mathbf{y}),
\end{equation}
and if $\varphi$ is further assumed to be strictly increasing, then
\begin{equation}\label{Proposition41-in- WangXu2}
\bigcup_{\mathbf{z}\in\mathcal{D}_{\lambda,\mathbf{y}}}\rm{S}({\mathbf{z}})=\rm{R}(\mathbf{y}).
\end{equation}

The next lemma concerns a relation between the extreme points of the  solution sets of problems \eqref{MNI} and  \eqref{regularization problem special}. 
\begin{lemma}\label{lemma: extreme point of regularization problem is one of MNI problem}
    Suppose that $\mathbf{y}_0\in \mathbb{R}^{n}$ and $\lambda>0$. Let $\mathcal{D}_{\lambda,\mathbf{y}_0}$ be defined by \eqref{Dy} with $\mathbf{y}:=\mathbf{y}_0$. If  both $\mathcal{Q}_{\mathbf{y}_0}, \varphi$ are convex and moreover, $\varphi$ is strictly increasing, then  
    \begin{equation}\label{relation_extreme_sets}
        \mathrm{ext}\left(\rm{R}(\mathbf{y}_0)\right)\subset\bigcup_{\mathbf{z}\in\mathcal{D}_{\lambda,\mathbf{y}_0}}\mathrm{ext}\left(\rm{S}(\mathbf{z})\right).
    \end{equation}
\end{lemma}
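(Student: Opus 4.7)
The plan is to reduce the lemma to the two containment relations \eqref{Proposition41-in- WangXu1} and \eqref{Proposition41-in- WangXu2}, which already record how $\rm{R}(\mathbf{y}_0)$ decomposes as a union of MNI solution sets indexed by $\mathcal{D}_{\lambda,\mathbf{y}_0}$. The strategy is to pick any $\hat f \in \mathrm{ext}(\rm{R}(\mathbf{y}_0))$, identify the particular $\hat{\mathbf{z}}\in\mathcal{D}_{\lambda,\mathbf{y}_0}$ for which $\hat f\in\rm{S}(\hat{\mathbf{z}})$, and then transfer the extreme-point property of $\hat f$ from $\rm{R}(\mathbf{y}_0)$ down to the smaller set $\rm{S}(\hat{\mathbf{z}})$.

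First I would observe that the convexity of $\mathcal{Q}_{\mathbf{y}_0}$ and $\varphi$ implies $\rm{R}(\mathbf{y}_0)$ is convex, so the notion of extreme point of $\rm{R}(\mathbf{y}_0)$ makes sense and admits the usual midpoint characterization. Fix $\hat f \in \mathrm{ext}(\rm{R}(\mathbf{y}_0))$ and set $\hat{\mathbf{z}}:=\mathcal{L}(\hat f)$. By definition \eqref{Dy}, we have $\hat{\mathbf{z}}\in\mathcal{D}_{\lambda,\mathbf{y}_0}$. Since $\varphi$ is strictly increasing, relation \eqref{Proposition41-in- WangXu2} applies and places $\hat f$ in $\rm{S}(\mathbf{z}')$ for some $\mathbf{z}'\in\mathcal{D}_{\lambda,\mathbf{y}_0}$. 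But membership in $\rm{S}(\mathbf{z}')$ forces $\mathcal{L}(\hat f)=\mathbf{z}'$, so $\mathbf{z}'=\hat{\mathbf{z}}$ and hence $\hat f\in\rm{S}(\hat{\mathbf{z}})$.

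Next I would verify that $\hat f$ is actually an extreme point of $\rm{S}(\hat{\mathbf{z}})$. Suppose $g,h\in\rm{S}(\hat{\mathbf{z}})$ satisfy $\hat f=(g+h)/2$. Inclusion \eqref{Proposition41-in- WangXu1} (which does not require strict monotonicity of $\varphi$) gives $g,h\in\rm{R}(\mathbf{y}_0)$. Because $\hat f$ is an extreme point of $\rm{R}(\mathbf{y}_0)$, the midpoint characterization forces $g=h=\hat f$. Consequently $\hat f\in\mathrm{ext}(\rm{S}(\hat{\mathbf{z}}))\subset\bigcup_{\mathbf{z}\in\mathcal{D}_{\lambda,\mathbf{y}_0}}\mathrm{ext}(\rm{S}(\mathbf{z}))$, which is the desired containment \eqref{relation_extreme_sets}.

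There is no serious obstacle here: the whole argument hinges on the observation that the particular $\mathbf{z}'$ supplied by \eqref{Proposition41-in- WangXu2} must agree with $\mathcal{L}(\hat f)$, and on the fact that \eqref{Proposition41-in- WangXu1} allows one to lift the midpoint decomposition of $\hat f$ in $\rm{S}(\hat{\mathbf{z}})$ back to $\rm{R}(\mathbf{y}_0)$, where $\hat f$ is known to be extreme. The hypotheses that $\mathcal{Q}_{\mathbf{y}_0}$ and $\varphi$ are convex are used only to make $\rm{R}(\mathbf{y}_0)$ convex, and the strict monotonicity of $\varphi$ is used only to invoke \eqref{Proposition41-in- WangXu2}; both are indispensable for the two steps above.
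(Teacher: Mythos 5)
Your proposal is correct and follows essentially the same route as the paper: take $\hat f\in\mathrm{ext}(\rm{R}(\mathbf{y}_0))$, set $\hat{\mathbf{z}}:=\mathcal{L}(\hat f)$, use \eqref{Proposition41-in- WangXu2} to place $\hat f$ in $\rm{S}(\hat{\mathbf{z}})$ and the inclusion $\rm{S}(\hat{\mathbf{z}})\subset\rm{R}(\mathbf{y}_0)$ to transfer the midpoint argument. The only cosmetic difference is that you cite \eqref{Proposition41-in- WangXu1} for that inclusion while the paper reads it off \eqref{Proposition41-in- WangXu2}, which changes nothing in substance.
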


\begin{proof}
We first note that the Krein-Milman theorem ensures that the extreme point sets appearing in inclusion \eqref{relation_extreme_sets} are all nonempty. 

We next prove that inclusion \eqref{relation_extreme_sets} holds true. We assume that 
\begin{equation}\label{hat f in ext sry in the proof}
\hat f\in\mathrm{ext}(\rm{R}(\mathbf{y}_0)),  
\end{equation}
and proceed to show that $\hat f$ belongs to the set on the right-hand-side of \eqref{relation_extreme_sets}.
To this end, we choose $\hat{\mathbf{z}}:=\mathcal{L}(\hat f)$ and clearly, $\hat{\mathbf{z}}\in \mathcal{D}_{\lambda,\mathbf{y}_0}$. It suffices to show that $\hat f\in\mathrm{ext}(\rm{S}(\hat{\mathbf{z}}))$. It follows from \eqref{Proposition41-in- WangXu2} with $\mathbf{y}:=\mathbf{y}_0$ that 
\begin{equation}\label{smz in sry in the proof}
\rm{S}(\hat{\mathbf{z}})\subset\rm{R}(\mathbf{y}_0),
\end{equation}
and $\hat f\in \rm{S}(\hat{\mathbf{z}})$. For any $f_1,f_2\in \rm{S}(\hat{\mathbf{z}})$ satisfying $\hat f=(f_1+f_2)/2$, according to \eqref{smz in sry in the proof}, it follows that $f_1,f_2\in\rm{R}(\mathbf{y}_0)$. This combined with \eqref{hat f in ext sry in the proof} and the definition of extreme points leads to $f_1=f_2=\hat f$. Again using the definition of extreme points, we obtain that $\hat f\in\mathrm{ext}(\rm{S}(\hat{\mathbf{z}}))$. 
\end{proof} 

Through the connection between the solutions of these two problems, we translate the sparse representer theorem \ref{theorem: sparser representer for MNI} for the MNI problem \eqref{MNI} to the regularization problem \eqref{regularization problem special}.

\begin{theorem}\label{theorem: sparser representer theorem regularization}
   Suppose that $\mathbf{y}_0\in \mathbb{R}^{n}$ and $\lambda>0$. Let $\mathcal{V}$ be defined by \eqref{V_span} and $\mathcal{D}_{\lambda,\mathbf{y}_0}$ be defined by \eqref{Dy} with $\mathbf{y}:=\mathbf{y}_0$. If Assumptions (A1) and (A2) hold with a positive constant $C$, then the following statements hold.
   \begin{enumerate}
    \item If $\mathcal{D}_{\lambda,\mathbf{y}_0}\neq\{\mathbf{0}\}$, then there exists $\hat f\in\rm{R}(\mathbf{y}_0)$ such that 
    \begin{equation}\label{eq: solution hat f regularization}
        \hat f=\sum\limits_{j\in\mathbb{N}_M}\hat{\alpha}_j K(\cdot, x_j'),
    \end{equation}
    for some $\hat\nu\in\mathcal{V}$,   positive integer ${M}\leq \mathrm{rank}(\mathbf{L}_{\hat{\nu}})$, $x_j'\in X_{\hat{\nu}}'$, $j\in\mathbb{N}_{M}$, and  $\hat{\alpha}_j\neq 0$, $j\in\mathbb{N}_{M}$, with  $\sum_{j\in\mathbb{N}_{M}}|\hat{\alpha}_j|=\|\hat\nu\|_{\mathcal{B}_*}/C$. 
    
    \item If both $\mathcal{Q}_{\mathbf{y}_0}$ and $\varphi$ are convex and $\varphi$ is strictly increasing, then every nonzero extreme point $\hat f$ of $\rm{R}(\mathbf{y}_0)$ satisfies 
    \eqref{eq: solution hat f regularization} for some $\hat\nu\in\mathcal{V}$,   positive integer ${M}\leq \mathrm{rank}(\mathbf{L}_{\hat{\nu}})$, $x_j'\in X_{\hat{\nu}}'$, $j\in\mathbb{N}_{M}$, and $\hat{\alpha}_j\neq 0$, $j\in\mathbb{N}_{M}$, with  $\sum_{j\in\mathbb{N}_{M}}|\hat{\alpha}_j|=\|\hat\nu\|_{\mathcal{B}_*}/C$.
    \end{enumerate}
\end{theorem}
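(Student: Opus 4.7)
The plan is to reduce the regularization statement to the MNI statement via the containment relations \eqref{Proposition41-in- WangXu1} and \eqref{Proposition41-in- WangXu2} and then invoke Theorem \ref{theorem: sparser representer for MNI}. Both assertions follow the same template: locate a nonzero $\hat{\mathbf{z}}\in \mathcal{D}_{\lambda,\mathbf{y}_0}$, pass to an appropriate extreme point of $\rm{S}(\hat{\mathbf{z}})$, and apply the sparse representer theorem for MNI with data vector $\hat{\mathbf{z}}$ in place of $\mathbf{y}$.

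For statement 1, I would pick any nonzero $\hat{\mathbf{z}}\in\mathcal{D}_{\lambda,\mathbf{y}_0}$, which exists by the hypothesis $\mathcal{D}_{\lambda,\mathbf{y}_0}\neq\{\mathbf 0\}$. Inclusion \eqref{Proposition41-in- WangXu1} gives $\rm{S}(\hat{\mathbf{z}})\subset \rm{R}(\mathbf{y}_0)$, and since $\rm{S}(\hat{\mathbf{z}})$ is a nonempty, convex, weakly${}^*$ compact subset of $\mathcal{B}$ (as noted in the discussion preceding \eqref{Krein Milamn for S_MNI}), the Krein--Milman theorem supplies an extreme point $\hat f\in \mathrm{ext}(\rm{S}(\hat{\mathbf z}))$. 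Because $\hat{\mathbf{z}}\neq\mathbf 0$, Theorem \ref{theorem: sparser representer for MNI} applies with $\mathbf{y}$ replaced by $\hat{\mathbf{z}}$ and delivers a representation of the form \eqref{eq: solution hat f regularization} for $\hat f$, together with the $\ell_1$ bound $\sum_{j\in\mathbb{N}_M}|\hat\alpha_j|=\|\hat\nu\|_{\mathcal{B}_*}/C$ and the cardinality bound $M\leq \mathrm{rank}(\mathbf{L}_{\hat\nu})$. Since $\hat f\in\rm{S}(\hat{\mathbf z})\subset\rm{R}(\mathbf{y}_0)$, this $\hat f$ witnesses statement 1.

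For statement 2, I would start from a nonzero extreme point $\hat f$ of $\rm{R}(\mathbf{y}_0)$. With both $\mathcal{Q}_{\mathbf{y}_0}$ and $\varphi$ convex and $\varphi$ strictly increasing, Lemma \ref{lemma: extreme point of regularization problem is one of MNI problem} yields some $\hat{\mathbf{z}}\in\mathcal{D}_{\lambda,\mathbf{y}_0}$ with $\hat f\in \mathrm{ext}(\rm{S}(\hat{\mathbf{z}}))$. The key small check is that $\hat{\mathbf{z}}\neq \mathbf 0$: if $\hat{\mathbf{z}}=\mathbf 0$ then $0\in\mathcal{M}_{\mathbf 0}$ achieves the minimum norm in the MNI problem \eqref{MNI} with data $\mathbf 0$, so $\rm{S}(\mathbf 0)=\{0\}$, forcing $\hat f=0$ and contradicting the hypothesis that $\hat f$ is nonzero. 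Hence $\hat{\mathbf{z}}\neq\mathbf 0$, and applying Theorem \ref{theorem: sparser representer for MNI} to the extreme point $\hat f$ of $\rm{S}(\hat{\mathbf z})$ produces the representation \eqref{eq: solution hat f regularization} with the stated bounds on $M$ and on $\sum_j|\hat\alpha_j|$.

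The main obstacle, such as it is, is a conceptual rather than a computational one: verifying that the extreme structure survives the transition from $\rm{R}(\mathbf{y}_0)$ to some $\rm{S}(\hat{\mathbf z})$, and confirming that the degenerate case $\hat{\mathbf z}=\mathbf 0$ is incompatible with $\hat f\neq 0$. Both of these are handled by Lemma \ref{lemma: extreme point of regularization problem is one of MNI problem} and the trivial observation that $\rm{S}(\mathbf 0)=\{0\}$. Once the extreme point has been transferred to the MNI side, the heavy lifting of counting kernel sessions and identifying the $\ell_1$-constant $\|\hat\nu\|_{\mathcal{B}_*}/C$ is already contained in Theorem \ref{theorem: sparser representer for MNI}, so the proof consists essentially of bookkeeping around these two reductions.
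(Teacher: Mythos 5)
Your proposal is correct and follows essentially the same route as the paper's proof: locate a nonzero $\hat{\mathbf{z}}\in\mathcal{D}_{\lambda,\mathbf{y}_0}$, use \eqref{Proposition41-in- WangXu1} (resp. Lemma \ref{lemma: extreme point of regularization problem is one of MNI problem}) to transfer to an extreme point of $\rm{S}(\hat{\mathbf{z}})$, rule out $\hat{\mathbf{z}}=\mathbf{0}$ via $\rm{S}(\mathbf{0})=\{0\}$, and invoke Theorem \ref{theorem: sparser representer for MNI}. The only point you gloss over is that Theorem \ref{theorem: sparser representer for MNI} takes as a hypothesis an element $\hat\nu\in\mathcal{V}$ satisfying \eqref{Non-empty-set} with $\mathbf{y}:=\hat{\mathbf{z}}$, so its existence must be supplied rather than assumed; the paper does this by appealing to Proposition \ref{prop: solution of dual problem gives solution of original problem} in Appendix B, and a complete write-up should include that step.
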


\begin{proof}
   We first prove Statement 1. Since $\rm{R}(\mathbf{y}_0)$ is nonempty and $\mathcal{D}_{\lambda,\mathbf{y}_0}\neq\{\mathbf{0}\}$, there exists $\hat g \in \rm{R}(\mathbf{y}_0)$ such that $\hat{\mathbf{z}}:=\mathcal{L}(\hat g)\neq \mathbf{0}$. 
   It follows from \eqref{Proposition41-in- WangXu1} that $\rm{S}(\hat{\mathbf{z}})\subset\rm{R}(\mathbf{y}_0)$. As a result, $\mathrm{ext}\left(\rm{S}(\hat{\mathbf{z}})\right)\subset\rm{R}(\mathbf{y}_0)$. Noting that  $\mathrm{ext}\left(\rm{S}(\hat{\mathbf{z}})\right)$ is nonempty, we choose $\hat{f}\in\mathrm{ext}\left(\rm{S}(\hat{\mathbf{z}})\right)$ and clearly, $\hat f\in\rm{R}(\mathbf{y}_0)$. It suffices to represent $\hat f$ as in \eqref{eq: solution hat f regularization}. According to Proposition \ref{prop: solution of dual problem gives solution of original problem}, we select $\hat\nu\in\mathcal{V}$ satisfying \eqref{Non-empty-set} with $\mathbf{y}:=\hat{\mathbf{z}}$. Then Theorem \ref{theorem: sparser representer for MNI} guarantees that $\hat f$, as an element of $\mathrm{ext}\left(\rm{S}(\hat{\mathbf{z}})\right)$, can be represented as in \eqref{eq: solution hat f regularization} for some positive integer ${M}\leq \mathrm{rank}(\mathbf{L}_{\hat{\nu}})$, $x_j'\in X_{\hat{\nu}}'$, $j\in\mathbb{N}_{M}$, and  $\hat{\alpha}_j\neq 0$, $j\in\mathbb{N}_{M}$, with  $\sum_{j\in\mathbb{N}_{M}}|\hat{\alpha}_j|=\|\hat\nu\|_{\mathcal{B}_*}/C$. 
    
    We next verify Statement 2. Suppose that $\hat f$ is a nonzero extreme point of $\rm{R}(\mathbf{y}_0)$ and set $\hat{\mathbf{z}}:=\mathcal{L}(\hat f)$. We will show that $\hat{\mathbf{z}}\neq \mathbf{0}.$ It follows from Lemma \ref{lemma: extreme point of regularization problem is one of MNI problem} that $\hat f\in\mathrm{ext}\left(\rm{S}(\hat{\mathbf{z}})\right)$. If $\hat{\mathbf{z}}= \mathbf{0}$, we obtain that $\rm{S}(\hat{\mathbf{z}})=\{0\}$ and thus $\hat f=0$. This is a contradiction. Again, we choose $\hat\nu\in\mathcal{V}$ satisfying \eqref{Non-empty-set} with $\mathbf{y}:=\hat{\mathbf{z}}$ by Proposition \ref{prop: solution of dual problem gives solution of original problem}. Theorem \ref{theorem: sparser representer for MNI} enables us to represent $\hat f\in\mathrm{ext}\left(\rm{S}(\hat{\mathbf{z}})\right)$ as in \eqref{eq: solution hat f regularization} for some positive integer ${M}\leq \mathrm{rank}(\mathbf{L}_{\hat{\nu}})$, $x_j'\in X_{\hat{\nu}}'$, $j\in\mathbb{N}_{M}$, and  $\hat{\alpha}_j\neq 0$, $j\in\mathbb{N}_{M}$, with  $\sum_{j\in\mathbb{N}_{M}}|\hat{\alpha}_j|=\|\hat\nu\|_{\mathcal{B}_*}/C$.   
\end{proof}

The regularization parameters play an important role in promoting the sparsity of the regularized solutions. Based upon the sparse representer theorem \ref{theorem: sparser representer theorem regularization}, we reveal in the following how the
regularization parameter can further promote the sparsity level of the
solution. For this purpose, we convert the regularization problem \eqref{regularization problem special} to a finite dimensional regularization problem with the $\ell_1$ norm. For given $\mathbf{y}\in\mathbb{R}^n$ and $\lambda>0$, we choose $\hat{\mathbf{z}}\in\mathcal{D}_{\lambda,\mathbf{y}}\backslash\{\mathbf{0}\}$ and $\hat\nu\in\mathcal{V}$ satisfying \eqref{Non-empty-set} with $\mathbf{y}:=\hat{\mathbf{z}}$. We then introduce the regularization problem in $\mathbb{R}^{n(\hat\nu)}$ by
\begin{equation}\label{finite general finite reg problem}
    \inf \left\{\mathcal{Q}_{\mathbf{y}}(\mathbf{L}_{\hat{\nu}}\bm{\alpha})+\lambda \varphi\left(C\|\bm{\alpha}\|_1\right): \bm{\alpha}\in\mathbb{R}^{n(\hat\nu)}\right\}.
\end{equation}
Denote by $\rm{R}_{\hat\nu}(\mathbf{y})$ the solution set of problem \eqref{finite general finite reg problem}. We show in the following lemma the relation between the solutions of the regularization problems \eqref{regularization problem special} and \eqref{finite general finite reg problem}. 
\begin{lemma}\label{lemma: solution equivalent finite regularization}
    Suppose that $\mathbf{y}_0\in \mathbb{R}^{n}$ and  $\lambda>0$. Let $\mathcal{V}$ be defined by \eqref{V_span},  $\mathcal{D}_{\lambda,\mathbf{y}_0}$ be defined by \eqref{Dy} with $\mathbf{y}:=\mathbf{y}_0$, and let $\hat{\mathbf{z}}\in\mathcal{D}_{\lambda,\mathbf{y}_0}\backslash\{\mathbf{0}\}$, $\hat\nu\in\mathcal{V}$ satisfy \eqref{Non-empty-set} with $\mathbf{y}:=\hat{\mathbf{z}}$. If Assumptions (A1) and (A2) hold, $X_{\hat\nu}'$ and  $\mathbf{L}_{\hat{\nu}}$ are defined by \eqref{extreme_point_set} and \eqref{finite matrix V hat nu}, respectively, then  $\hat{f}:=\sum_{j\in\mathbb{N}_{n(\hat{\nu})}}\hat\alpha_j K(\cdot,x_j')\in\rm{R}(\mathbf{y}_0)$ if and only if $\hat{\bm{\alpha}}:=[\hat\alpha_j:j\in\mathbb{N}_{n(\hat{\nu})}]\in\rm{R}_{\hat\nu}(\mathbf{y}_0)$.
\end{lemma}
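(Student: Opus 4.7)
The plan is to set up an objective-preserving correspondence between $\mathbb{R}^{n(\hat\nu)}$ and a finite-dimensional subspace of $\mathcal{B}$. For $\bm\alpha=[\alpha_j:j\in\mathbb{N}_{n(\hat\nu)}]\in\mathbb{R}^{n(\hat\nu)}$, set $f_{\bm\alpha}:=\sum_{j\in\mathbb{N}_{n(\hat\nu)}}\alpha_j K(\cdot,x_j')$. Assumption (A2) yields $\|f_{\bm\alpha}\|_{\mathcal{B}}=C\|\bm\alpha\|_1$, while the definition \eqref{finite matrix V hat nu} of $\mathbf{L}_{\hat\nu}$ together with the linearity of $\mathcal{L}$ in \eqref{operator L} gives $\mathcal{L}(f_{\bm\alpha})=\mathbf{L}_{\hat\nu}\bm\alpha$. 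Consequently, the objective of \eqref{regularization problem special} evaluated at $f_{\bm\alpha}$ coincides with that of \eqref{finite general finite reg problem} evaluated at $\bm\alpha$.

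The forward implication is then immediate. If $\hat f=f_{\hat{\bm\alpha}}\in\rm{R}(\mathbf{y}_0)$, then for every $\bm\alpha\in\mathbb{R}^{n(\hat\nu)}$ the function $f_{\bm\alpha}\in\mathcal{B}$ is feasible for \eqref{regularization problem special}, and the objective-preservation identity gives
\[
\mathcal{Q}_{\mathbf{y}_0}(\mathbf{L}_{\hat\nu}\hat{\bm\alpha})+\lambda\varphi(C\|\hat{\bm\alpha}\|_1)\leq \mathcal{Q}_{\mathbf{y}_0}(\mathbf{L}_{\hat\nu}\bm\alpha)+\lambda\varphi(C\|\bm\alpha\|_1),
\]
so $\hat{\bm\alpha}\in\rm{R}_{\hat\nu}(\mathbf{y}_0)$.

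The backward direction is the main obstacle, since an arbitrary competitor $g\in\mathcal{B}$ for \eqref{regularization problem special} need not have the form $f_{\bm\alpha}$. I would handle this by proving that the infima $I^{*}$ of \eqref{regularization problem special} and $I^{*}_{\hat\nu}$ of \eqref{finite general finite reg problem} coincide. The inequality $I^{*}\leq I^{*}_{\hat\nu}$ is immediate from objective preservation. For the reverse inequality, I would exploit $\hat{\mathbf{z}}\in\mathcal{D}_{\lambda,\mathbf{y}_0}$: inclusion \eqref{Proposition41-in- WangXu1} gives $\rm{S}(\hat{\mathbf{z}})\subset\rm{R}(\mathbf{y}_0)$, and the Krein-Milman theorem, applied to the nonempty weakly${}^*$ compact convex set $\rm{S}(\hat{\mathbf{z}})$, furnishes some $\hat h\in\mathrm{ext}(\rm{S}(\hat{\mathbf{z}}))$. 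Since $\hat{\mathbf{z}}\neq\mathbf{0}$ and $\hat\nu$ satisfies \eqref{Non-empty-set} with $\mathbf{y}:=\hat{\mathbf{z}}$, Theorem \ref{theorem: sparser representer for MNI} represents $\hat h$ as $\sum_{j\in\mathbb{N}_M}\beta_j K(\cdot,x_j')$ with $x_j'\in X_{\hat\nu}'$, so $\hat h=f_{\bm\beta}$ for some $\bm\beta\in\mathbb{R}^{n(\hat\nu)}$ (padding unused coordinates by zero). Because $\hat h\in\rm{R}(\mathbf{y}_0)$ attains $I^{*}$, the objective-preservation identity shows that $\bm\beta$ attains the same value in \eqref{finite general finite reg problem}, whence $I^{*}_{\hat\nu}\leq I^{*}$ and therefore $I^{*}=I^{*}_{\hat\nu}$. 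To close the argument, if $\hat{\bm\alpha}\in\rm{R}_{\hat\nu}(\mathbf{y}_0)$, then $f_{\hat{\bm\alpha}}$ attains $I^{*}_{\hat\nu}=I^{*}$ in \eqref{regularization problem special}, so $\hat f=f_{\hat{\bm\alpha}}\in\rm{R}(\mathbf{y}_0)$.
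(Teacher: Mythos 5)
Your proof is correct and takes essentially the same route as the paper: the forward direction is identical, and for the backward direction the paper likewise produces a regularized solution of the form $\sum_{j}\widetilde{\alpha}_j K(\cdot,x_j')$ with $x_j'\in X_{\hat\nu}'$ (by citing statement 1 of Theorem \ref{theorem: sparser representer theorem regularization}, which is proved exactly by your route through \eqref{Proposition41-in- WangXu1}, Krein--Milman and Theorem \ref{theorem: sparser representer for MNI}) and then compares objectives using the identities furnished by Assumptions (A1)--(A2). Your rephrasing of the final step as equality of the two infima, rather than a direct comparison of $\hat f$ with the exhibited solution, is only a cosmetic difference.
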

\begin{proof}
    We suppose that $\hat{f}:=\sum_{j\in\mathbb{N}_{n(\hat{\nu})}}\hat\alpha_j K(\cdot,x_j')\in\rm{R}(\mathbf{y}_0)$ and proceed to prove that 
    $\hat{\bm{\alpha}}:=[\hat\alpha_j:j\in\mathbb{N}_{n(\hat{\nu})}]\in\rm{R}_{\hat\nu}(\mathbf{y}_0)$.
    It suffices to show that \begin{equation}\label{hat alpha is a solution}
        \mathcal{Q}_{\mathbf{y}_0}(\mathbf{L}_{\hat{\nu}}\hat{\bm{\alpha}})+\lambda \varphi(C\|\hat{\bm{\alpha}}\|_1)\leq \mathcal{Q}_{\mathbf{y}_0}(\mathbf{L}_{\hat{\nu}}\bm{\alpha})+\lambda \varphi\left(C\|\bm{\alpha}\|_1\right),
    \end{equation}
    for any $\bm{\alpha}\in\mathbb{R}^{n(\hat{\nu})}$. 
    Let $\bm{\alpha}:=[\alpha_j:j\in\mathbb{N}_{n(\hat{\nu})}]\in\mathbb{R}^{n(\hat{\nu})}$ and set $f:=\sum_{j\in\mathbb{N}_{n(\hat{\nu})}}\alpha_j K(\cdot,x_j')$.  Since $\hat{f}\in\rm{R}(\mathbf{y}_0)$, we get that  
    \begin{equation}\label{hat f is a solution}
        \mathcal{Q}_{\mathbf{y}_0}(\mathcal{L}(\hat f))+\lambda \varphi(\|\hat f\|_{\mathcal{B}})\leq \mathcal{Q}_{\mathbf{y}_0}(\mathcal{L}(f))+\lambda \varphi\left(\|f\|_{\mathcal{B}}\right).
    \end{equation}
   Substituting  equations \eqref{f-and-alpha-1} and \eqref{f-and-alpha-2} with the pair $f,\bm\alpha$ and the same equations with the pair $\hat{f}, \hat{\bm{\alpha}}$ into inequality \eqref{hat f is a solution}, we get the desired inequality \eqref{hat alpha is a solution}. Conversely, suppose that  $\hat{\bm{\alpha}}:=[\hat\alpha_j:j\in\mathbb{N}_{n(\hat{\nu})}]\in\rm{R}_{\hat\nu}(\mathbf{y}_0)$ and set $\hat{f}:=\sum_{j\in\mathbb{N}_{n(\hat{\nu})}}\hat\alpha_j K(\cdot,x_j')$. Noting that $\mathcal{D}_{\lambda,\mathbf{y}_0}\neq\{\mathbf{0}\}$, statement 1 of Theorem \ref{theorem: sparser representer theorem regularization} ensures that there exists $\widetilde{\bm{\alpha}}:=[\widetilde{\alpha}_j:j\in\mathbb{N}_{n(\hat{\nu})}]\in\mathbb{R}^{n(\hat{\nu})}$ such that $\widetilde f:=\sum_{j\in\mathbb{N}_{n(\hat{\nu})}}\widetilde{\alpha}_j K(\cdot,x_j')\in \rm{R}(\mathbf{y}_0)$. Since $\hat{\bm{\alpha}}\in\rm{R}_{\hat\nu}(\mathbf{y}_0)$, inequality \eqref{hat alpha is a solution} holds with $\bm{\alpha}:=\widetilde{\bm{\alpha}}$. Again substituting  equations \eqref{f-and-alpha-1} and \eqref{f-and-alpha-2} with the pair $\widetilde f,\widetilde{\bm{\alpha}}$ and the same equations with the pair $\hat{f}, \hat{\bm{\alpha}}$ into inequality \eqref{hat alpha is a solution} with $\bm{\alpha}:=\widetilde{\bm{\alpha}}$, we obtain inequality \eqref{hat f is a solution} with $f:=\widetilde f$. This together with $\widetilde f\in\rm{R}(\mathbf{y}_0)$ leads to $\hat{f}\in\rm{R}(\mathbf{y}_0)$.
\end{proof}

The role of the regularization parameter
on the sparsity of the solutions of the finite dimensional regularization problem with the $\ell_1$ norm has been studied in \cite{liu2023parameter}. By similar arguments in \cite{liu2023parameter},  we present a sparsity characterization of the solutions of problem \eqref{finite general finite reg problem} 
as follows. For each $j\in\mathbb{N}_{n(\hat\nu)}$, we denote
by $\mathbf{e}_j$ the unit vector with $1$ for the $j$th component and $0$ otherwise. Using these vectors, we define $n(\hat\nu)$ numbers of subsets of $\mathbb{R}^{n(\hat\nu)}$ by
$$
\Omega_l:=\left\{\sum_{j\in\mathbb{N}_l}\alpha_{k_j}\mathbf{e}_{k_j}
:\alpha_{k_j}\in\mathbb{R}\setminus{\{0\}},\ \mathrm{for} \
1\leq k_1<k_2<\cdots< k_l\leq n(\hat\nu)\right\},\ \mbox{for all}\ l\in\mathbb{N}_{n(\hat\nu)}.
$$

\begin{proposition}\label{prop: finite regularization}
    Suppose that $\mathbf{y}_0\in \mathbb{R}^{n}$, $\lambda>0$, both $\mathcal{Q}_{\mathbf{y}_0}:\mathbb{R}^{n} \rightarrow \mathbb{R}_{+}$ and $\varphi: \mathbb{R}_{+} \rightarrow \mathbb{R}_{+}$ are convex, and moreover, $\varphi$ is differentiable and strictly increasing. Let $\mathcal{V}$ be defined by \eqref{V_span}, $\mathcal{D}_{\lambda,\mathbf{y}_0}$ be defined by \eqref{Dy} with $\mathbf{y}:=\mathbf{y}_0$, and let $\hat{\mathbf{z}}\in\mathcal{D}_{\lambda,\mathbf{y}_0}\backslash\{\mathbf{0}\}$, $\hat\nu\in\mathcal{V}$ satisfy \eqref{Non-empty-set} with $\mathbf{y}:=\hat{\mathbf{z}}$. If Assumptions (A1) and (A2) hold with a positive constant $C$ and $\mathbf{L}_{\hat{\nu}}$ be defined by \eqref{finite matrix V hat nu}, then problem \eqref{finite general finite reg problem} with $\mathbf{y}:=\mathbf{y}_0$ has a solution  $\hat{\bm{\alpha}}=\sum_{i\in\mathbb{N}_{l}}\hat{\alpha}_{k_i}\mathbf{e}_{k_i}\in\Omega_l$ for some $l\in\mathbb{N}_{n(\hat\nu)}$ if and only if there exists 
    $\mathbf{a}\in\partial \mathcal{Q}_{\mathbf{y}_0}(\mathbf{L}_{\hat\nu}\hat{\bm{\alpha}})$ such that  
    \begin{align}         
    \lambda&=-(\mathbf{L}_{\hat\nu}^\top\mathbf{a})_{k_i}\mathrm{sign}(\hat\alpha_{k_i})/(C\varphi'(C\|\hat{\bm{\alpha}}\|_1)),  \ \ i\in\mathbb{N}_l,\label{lambda is greater than 1}\\
    \lambda&\geq |(\mathbf{L}_{\hat\nu}^\top\mathbf{a})_{j}|/(C\varphi'(C\|\hat{\bm{\alpha}}\|_1)), \ j\in\mathbb{N}_{n(\hat\nu)}\backslash\{k_i:i\in\mathbb{N}_l\}. \label{lambda is greater than 2}  
    \end{align}
\end{proposition}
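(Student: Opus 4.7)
The plan is to treat problem \eqref{finite general finite reg problem} as a convex optimization problem on the finite-dimensional space $\mathbb{R}^{n(\hat\nu)}$ and apply Fermat's optimality principle to characterize its minimizers. Set
\[
F(\bm{\alpha}) := \mathcal{Q}_{\mathbf{y}_0}(\mathbf{L}_{\hat\nu}\bm{\alpha}) + \lambda\, \varphi(C\|\bm{\alpha}\|_1), \quad \bm{\alpha} \in \mathbb{R}^{n(\hat\nu)}.
\]
The first step is to observe that $F$ is convex: $\mathcal{Q}_{\mathbf{y}_0}\circ \mathbf{L}_{\hat\nu}$ is a composition of a convex function with a linear map, and $\varphi(C\|\cdot\|_1)$ is a composition of the increasing convex function $\varphi$ with the convex seminorm $C\|\cdot\|_1$. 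Consequently, $\hat{\bm{\alpha}}$ solves \eqref{finite general finite reg problem} if and only if $\mathbf{0} \in \partial F(\hat{\bm{\alpha}})$.

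The second step is to unfold this subdifferential inclusion via standard convex calculus. Since all functions in play are finite-valued on $\mathbb{R}^{n(\hat\nu)}$, the sum rule applies without qualification, giving
\[
\partial F(\hat{\bm{\alpha}}) = \mathbf{L}_{\hat\nu}^\top\, \partial \mathcal{Q}_{\mathbf{y}_0}(\mathbf{L}_{\hat\nu}\hat{\bm{\alpha}}) + \lambda\, \partial\bigl(\varphi(C\|\cdot\|_1)\bigr)(\hat{\bm{\alpha}}).
\]
Because $\varphi$ is differentiable and increasing, the chain rule yields
\[
\partial\bigl(\varphi(C\|\cdot\|_1)\bigr)(\hat{\bm{\alpha}}) = C\varphi'(C\|\hat{\bm{\alpha}}\|_1)\, \partial\|\cdot\|_1(\hat{\bm{\alpha}}).
\]
The third step is to invoke the well-known description of the $\ell_1$ subdifferential. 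If $\hat{\bm{\alpha}} = \sum_{i\in\mathbb{N}_l}\hat\alpha_{k_i}\mathbf{e}_{k_i}\in\Omega_l$, then $\mathbf{s}\in\partial\|\cdot\|_1(\hat{\bm{\alpha}})$ if and only if $s_{k_i} = \mathrm{sign}(\hat\alpha_{k_i})$ for $i\in\mathbb{N}_l$ and $|s_j|\le 1$ for $j\in\mathbb{N}_{n(\hat\nu)}\setminus\{k_i:i\in\mathbb{N}_l\}$.

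Combining these ingredients, the inclusion $\mathbf{0}\in\partial F(\hat{\bm{\alpha}})$ becomes the existence of $\mathbf{a}\in \partial\mathcal{Q}_{\mathbf{y}_0}(\mathbf{L}_{\hat\nu}\hat{\bm{\alpha}})$ and $\mathbf{s}\in\partial\|\cdot\|_1(\hat{\bm{\alpha}})$ such that
\[
\mathbf{L}_{\hat\nu}^\top\mathbf{a} + \lambda C\varphi'(C\|\hat{\bm{\alpha}}\|_1)\,\mathbf{s} = \mathbf{0}.
\]
Reading this equality coordinate by coordinate and using the explicit form of $\mathbf{s}$ recovers exactly the equality \eqref{lambda is greater than 1} on the active indices $k_i$ and the inequality \eqref{lambda is greater than 2} on the inactive indices, and conversely any $\mathbf{a}$ satisfying \eqref{lambda is greater than 1} and \eqref{lambda is greater than 2} defines an admissible $\mathbf{s}$ via the above relation, thereby certifying $\mathbf{0}\in\partial F(\hat{\bm{\alpha}})$.

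I do not anticipate a serious obstacle: the argument is a clean application of subdifferential calculus, and the hypothesis $\hat{\mathbf{z}}\neq\mathbf{0}$ together with differentiability and strict monotonicity of $\varphi$ guarantees that $\varphi'(C\|\hat{\bm{\alpha}}\|_1)>0$, so division by this quantity in \eqref{lambda is greater than 1}--\eqref{lambda is greater than 2} is legitimate. The only care required is to justify the sum rule (immediate, since all summands are finite-valued convex functions on a finite-dimensional space) and the chain rule for $\varphi(C\|\cdot\|_1)$ at points where the argument $C\|\hat{\bm{\alpha}}\|_1$ lies where $\varphi$ is differentiable, which is assumed throughout.
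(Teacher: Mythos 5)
Your proposal is correct and follows essentially the same route as the paper's proof: Fermat's rule for the convex objective, the sum and chain rules for the subdifferential, the standard description of $\partial\|\cdot\|_1$ at a point of $\Omega_l$, and the observation that $\varphi'(C\|\hat{\bm{\alpha}}\|_1)>0$ so that the division in \eqref{lambda is greater than 1}--\eqref{lambda is greater than 2} is legitimate. No gaps to report.
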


\begin{proof}
    Due to the convexity of  $\mathcal{Q}_{\mathbf{y}_0}$ and the linearity of $\mathbf{L}_{\hat\nu}$, the fidelity term $\mathcal{Q}_{\mathbf{y}_0}\circ\mathbf{L}_{\hat\nu}$ is convex. Moreover, since $\varphi$ is increasing, convex and the norm function $\|\cdot\|_1$ is convex, we claim that the regularization term $\varphi(C\|\cdot\|_1)$ is also convex. By using the Fermat rule (\cite{Zalinescu2002convex}) and the continuity of $\varphi(C\|\cdot\|_1)$, we conclude that $\hat{\bm{\alpha}}$ is a solution of problem \eqref{finite general finite reg problem} with $\mathbf{y}:=\mathbf{y}_0$ if and only if \begin{equation}\label{Fermat's rule1}
    \mathbf{0}\in\partial (\mathcal{Q}_{\mathbf{y}_0}\circ\mathbf{L}_{\hat\nu})(\hat{\bm{\alpha}})+\lambda\partial(\varphi(C\|\cdot\|_1))(\hat{\bm{\alpha}}).
    \end{equation}
    According to the chain rule of the subdifferential and the differentiability of $\varphi$, inclusion \eqref{Fermat's rule1} can be rewritten as 
    $$
    \mathbf{0}\in\mathbf{L}_{\hat\nu}^\top\partial \mathcal{Q}_{\mathbf{y}_0}(\mathbf{L}_{\hat\nu}\hat{\bm{\alpha}})+\lambda C\varphi'(C\|\hat{\bm{\alpha}}\|_1)\partial\|\cdot\|_1(\hat{\bm{\alpha}}).
    $$ 
    Equivalently,
    there exists $\mathbf{a}\in\partial \mathcal{Q}_{\mathbf{y}_0}(\mathbf{L}_{\hat\nu}\hat{\bm{\alpha}})$ such that 
    \begin{equation}\label{Fermat's rule2}
    - \mathbf{L}_{\hat\nu}^\top\mathbf{a} \in\lambda C \varphi'(C\|\hat{\bm{\alpha}}\|_1) \partial\|\cdot\|_1(\hat{\bm{\alpha}}).    
    \end{equation}
    Noting that $\hat{\bm{\alpha}}=\sum_{i\in\mathbb{N}_{l}}\hat{\alpha}_{k_i}\mathbf{e}_{k_i}\in\Omega_l$  with $\hat\alpha_{k_i}\in\mathbb{R}\setminus\{0\}$, $i\in\mathbb{N}_{l}$, we obtain that 
    $$ \partial\|\cdot\|_1(\hat{\bm{\alpha}})
    =\left\{\mathbf{z}\in\mathbb{R}^{n(\hat\nu)}: z_{k_i}=\mathrm{sign}(\hat\alpha_{k_i}),i\in\mathbb{N}_l \ \mbox{and}\ |z_j|\leq 1, j\in\mathbb{N}_{n(\hat\nu)}\setminus\{k_i:i\in\mathbb{N}_l\}\right\}.$$
    By using the above equation and noting that $\varphi'(t)>0$ for all $t\in(0,+\infty)$, we rewrite inclusion \eqref{Fermat's rule2} as \eqref{lambda is greater than 1} and \eqref{lambda is greater than 2}. This completes the proof of this proposition.
\end{proof}

Combining Lemma \ref{lemma: solution equivalent finite regularization} with Proposition \ref{prop: finite regularization}, we are ready to obtain 
choices of the regularization parameter for sparse solutions of the regularization problem  \eqref{regularization problem special}.

\begin{theorem}\label{theorem: lambda is greater than}
Suppose that $\mathbf{y}_0\in \mathbb{R}^{n}$, $\lambda>0$, both $\mathcal{Q}_{\mathbf{y}_0}:\mathbb{R}^{n} \rightarrow \mathbb{R}_{+}$ and $\varphi: \mathbb{R}_{+} \rightarrow \mathbb{R}_{+}$ are convex, and moreover, $\varphi$ is differentiable and strictly increasing. Let $\mathcal{V}$ be defined by \eqref{V_span}, $\mathcal{D}_{\lambda,\mathbf{y}_0}$ be defined by \eqref{Dy} with $\mathbf{y}:=\mathbf{y}_0$, and let $\hat{\mathbf{z}}\in\mathcal{D}_{\lambda,\mathbf{y}_0}\backslash\{\mathbf{0}\}$, $\hat\nu\in\mathcal{V}$ satisfy \eqref{Non-empty-set} with $\mathbf{y}:=\hat{\mathbf{z}}$. If Assumptions (A1) and (A2) hold, $X_{\hat\nu}'$ and    $\mathbf{L}_{\hat{\nu}}$ are defined by \eqref{extreme_point_set} and \eqref{finite matrix V hat nu}, respectively, then problem \eqref{regularization problem special} with $\mathbf{y}:=\mathbf{y}_0$ has a solution  $\hat{f}=\sum_{i\in\mathbb{N}_{l}}\hat{\alpha}_{k_i}K(\cdot,x'_{k_i})$ with $\hat\alpha_{k_i}\in\mathbb{R}\setminus\{0\}$, $x'_{k_i}\in X_{\hat{\nu}}'$, $i\in\mathbb{N}_{l}$ for some $l\in\mathbb{N}_{n(\hat\nu)}$ if and only if there exists 
    $\mathbf{a}\in\partial \mathcal{Q}_{\mathbf{y}_0}(\mathbf{L}_{\hat\nu}\hat{\bm{\alpha}})$ with $\hat{\bm{\alpha}}:=\sum_{i\in\mathbb{N}_{l}}\hat{\alpha}_{k_i}\mathbf{e}_{k_i}$ such that \eqref{lambda is greater than 1} and \eqref{lambda is greater than 2} hold.
\end{theorem}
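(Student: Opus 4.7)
The plan is to prove this theorem as a direct corollary of two prior results: Lemma \ref{lemma: solution equivalent finite regularization}, which identifies solutions of the infinite-dimensional problem \eqref{regularization problem special} with solutions of the finite-dimensional surrogate \eqref{finite general finite reg problem}, and Proposition \ref{prop: finite regularization}, which provides the parameter characterization in the finite-dimensional setting. The only real work is setting up a correspondence between the sparse kernel representation $\hat{f}=\sum_{i\in\mathbb{N}_{l}}\hat{\alpha}_{k_i}K(\cdot,x'_{k_i})$ and the coefficient vector $\hat{\bm{\alpha}}\in\Omega_l$.

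First I would use the enumeration $X_{\hat\nu}'=\{x_j':j\in\mathbb{N}_{n(\hat\nu)}\}$ from \eqref{extreme_point_set} to build a bijection between sparse kernel expansions supported on $X_{\hat\nu}'$ and vectors in $\mathbb{R}^{n(\hat\nu)}$. Specifically, given a candidate solution $\hat f=\sum_{i\in\mathbb{N}_{l}}\hat{\alpha}_{k_i}K(\cdot,x'_{k_i})$ with $\hat\alpha_{k_i}\neq 0$ and $x'_{k_i}\in X_{\hat\nu}'$, the associated coefficient vector is exactly $\hat{\bm{\alpha}}:=\sum_{i\in\mathbb{N}_{l}}\hat{\alpha}_{k_i}\mathbf{e}_{k_i}\in\Omega_l$, and conversely every element of $\Omega_l$ yields such an $\hat f$. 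This identification extends the equation $\hat f=\sum_{j\in\mathbb{N}_{n(\hat\nu)}}\hat\alpha_j K(\cdot,x_j')$ with $\hat{\bm{\alpha}}:=[\hat\alpha_j:j\in\mathbb{N}_{n(\hat\nu)}]$ used in Lemma \ref{lemma: solution equivalent finite regularization} by the convention that missing indices correspond to zero coefficients.

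Next I would chain the equivalences: by Lemma \ref{lemma: solution equivalent finite regularization}, $\hat f\in\rm{R}(\mathbf{y}_0)$ if and only if $\hat{\bm{\alpha}}\in\rm{R}_{\hat\nu}(\mathbf{y}_0)$. Since $\hat{\bm{\alpha}}\in\Omega_l$ by construction, Proposition \ref{prop: finite regularization} (whose hypotheses on $\mathcal{Q}_{\mathbf{y}_0}$, $\varphi$, and the matrix $\mathbf{L}_{\hat\nu}$ are precisely the standing hypotheses of the present theorem) then gives that $\hat{\bm{\alpha}}\in\rm{R}_{\hat\nu}(\mathbf{y}_0)$ if and only if there exists $\mathbf{a}\in\partial\mathcal{Q}_{\mathbf{y}_0}(\mathbf{L}_{\hat\nu}\hat{\bm{\alpha}})$ satisfying \eqref{lambda is greater than 1} and \eqref{lambda is greater than 2}. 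Concatenating the two equivalences yields the claim.

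The main obstacle, such as it is, lies in verifying that Lemma \ref{lemma: solution equivalent finite regularization} applies in both directions of the current theorem. In the ``only if'' direction, a proposed solution $\hat f$ may \emph{a priori} involve kernel sessions at points outside $X_{\hat\nu}'$, so I would first invoke statement 1 of Theorem \ref{theorem: sparser representer theorem regularization} (which is applicable because $\hat{\mathbf{z}}\neq\mathbf{0}$ guarantees $\mathcal{D}_{\lambda,\mathbf{y}_0}\neq\{\mathbf{0}\}$) to ensure that at least one solution of \eqref{regularization problem special} admits a representation supported on $X_{\hat\nu}'$, and then phrase the theorem as characterizing precisely those solutions expressible in this sparse form. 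Once the sparse ansatz is fixed, the identification of $\hat f$ with $\hat{\bm{\alpha}}\in\Omega_l$ is unambiguous by Assumption (A2), which via the isometric relation $\|\hat f\|_{\mathcal{B}}=C\|\hat{\bm{\alpha}}\|_1$ rules out cancellation among distinct kernel sessions and hence guarantees uniqueness of the coefficients. After that, the conclusion is a direct transcription of the finite-dimensional characterization.
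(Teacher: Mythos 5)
Your proposal is correct and follows essentially the same route as the paper: identify the sparse kernel expansion $\hat f$ with its coefficient vector $\hat{\bm{\alpha}}\in\Omega_l$, invoke Lemma \ref{lemma: solution equivalent finite regularization} to pass between $\rm{R}(\mathbf{y}_0)$ and $\rm{R}_{\hat\nu}(\mathbf{y}_0)$, and then apply Proposition \ref{prop: finite regularization} to obtain the characterization \eqref{lambda is greater than 1}--\eqref{lambda is greater than 2}. The extra remarks (zero-padding the coefficients, uniqueness of the coefficients via Assumption (A2), and the role of $\hat{\mathbf{z}}\neq\mathbf{0}$) are consistent with, and implicit in, the paper's argument.
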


\begin{proof}
    It follows from Lemma \ref{lemma: solution equivalent finite regularization} that $\hat f:=\sum_{i\in\mathbb{N}_{l}}\hat\alpha_{k_i}K(\cdot,x'_{k_i})$ with $\hat\alpha_{k_i}\in\mathbb{R}\setminus\{0\}$, $x'_{k_i}\in X_{\hat{\nu}}'$, $i\in\mathbb{N}_{l}$, is a solution of problem \eqref{regularization problem special} with $\mathbf{y}:=\mathbf{y}_0$ if and only if $\hat{\bm{\alpha}}:=\sum_{i\in\mathbb{N}_{l}}\hat\alpha_{k_i}\mathbf{e}_{k_i}\in\Omega_l$ is a solution of problem \eqref{finite general finite reg problem} with $\mathbf{y}:=\mathbf{y}_0$. Proposition \ref{prop: finite regularization} ensures that the latter is equivalent to that there exists $\mathbf{a}\in\partial \mathcal{Q}_{\mathbf{y}_0}(\mathbf{L}_{\hat\nu}\hat{\bm{\alpha}})$ such that \eqref{lambda is greater than 1} and \eqref{lambda is greater than 2} hold.  This completes the proof of this theorem.
\end{proof}

Observing from Theorem \ref{theorem: lambda is greater than}, the choice
of the regularization parameter can influence the sparsity of the solution. Specifically, the equalities in \eqref{lambda is greater than 1} and
the inequalities in \eqref{lambda is greater than 2} correspond to the nonzero components
and the zero components of the solution, respectively. 
As the number of the inequalities increases,
 the solution becomes more sparse. Such a characterization of the sparsity of the solutions also benefits from Assumptions (A1) and (A2)  satisfied by the RKBS.

\section{Sparse Learning in $\ell_1(\mathbb{N})$}
In this section, we consider the MNI problem and the regularization problem in the sequence
space $\ell_1(\mathbb{N})$ which is a typical RKBS. We first verify that $\ell_1(\mathbb{N})$ satisfies Assumptions (A1) and (A2). We then specialize Theorems \ref{theorem: sparser representer for MNI} and \ref{theorem: sparser representer theorem regularization} to the RKBS $\ell_1(\mathbb{N})$ and establish sparse representer theorems for the solutions of the MNI problem and the regularization problem in this space. For the regularization problem in $\ell_1(\mathbb{N})$, we further study the influence of the regularization parameter
on the sparsity of the solutions. Finally, we show that unlike $\ell_1(\mathbb{N})$, the RKBSs $\ell_p(\mathbb{N})$, for all $1<p<+\infty$, cannot promote sparsity of a learning solution in them.

We first recall the sequence space $\ell_1(\mathbb{N})$. The Banach space $\ell_1(\mathbb{N})$ consists of all real sequences $\mathbf{x}:=[x_j:j\in\mathbb{N}]$ such that 
$\|\mathbf{x}\|_1:=\sum_{j\in\mathbb{N}}|x_j|<+\infty$. It is known that $\ell_1(\mathbb{N})$ has $c_0(\mathbb{N})$ as its pre-dual space, where $c_0(\mathbb{N})$ denotes the space of all real sequences  $\mathbf{v}:=[v_j:j\in\mathbb{N}]$ converging to $0$ as $j\to\infty$, endowed with $\|\mathbf{v}\|_{\infty}:=\sup\{|v_j|: j\in\mathbb{N}\}<+\infty$. The dual bilinear form $\langle\cdot,\cdot\rangle_{\ell_1(\mathbb{N})}$ on $c_0(\mathbb{N})\times\ell_1(\mathbb{N})$ is defined by
$\langle\mathbf{v},\mathbf{x}\rangle_{\ell_1(\mathbb{N})}:=\sum_{j\in\mathbb{N}}v_jx_j,$
for all $\mathbf{v}:=[v_j:j\in\mathbb{N}]\in c_0(\mathbb{N})$ and all $\mathbf{x}:=[x_j:j\in\mathbb{N}]\in\ell_1(\mathbb{N})$. It has been established in \cite{xu2022sparse} that $\ell_1(\mathbb{N})$ is an RKBS composed of functions defined on $\mathbb{N}$ and its $\delta$-dual is isometrically isomorphic to $c_0(\mathbb{N})$ which is also the adjoint RKBS of $\ell_1(\mathbb{N})$. That is, $\ell_1(\mathbb{N})$ and $c_0(\mathbb{N})$ are a pair of RKBSs. Moreover, the function $K:\mathbb{N}\times\mathbb{N}\to\mathbb{R}$ defined by 
\begin{equation}\label{reproducing kernel of lp}
    K(i,j):=
    \begin{cases}
    1,&i=j,\\
    0,&i\neq j,
    \end{cases}
    \ \  \mbox{for any}\ (i,j)\in\mathbb{N}\times\mathbb{N}, 
\end{equation}
is the reproducing kernel for $\ell_1(\mathbb{N})$.  

We next describe the MNI problem and the regularization problem in $\ell_1(\mathbb{N})$. We suppose that $\mathbf{v}_i:=[v_{i,j}:j\in\mathbb{N}]$, $i\in\mathbb{N}_n$, are a finite number of linearly independent elements in $c_0(\mathbb{N})$ and set 
\begin{equation}\label{V_span_l1}
\mathcal{V}:=\mathrm{span}\{\mathbf{v}_j:j\in\mathbb{N}_n\}.
\end{equation}
The operator $\mathcal{L}:\ell_1(\mathbb{N})\to\mathbb{R}^n$ with the form \eqref{operator L} may be taken as a semi-infinite matrix 
\begin{equation}\label{mathbf T}
    \mathbf{V}:=[v_{i,j}:i\in\mathbb{N}_n, j\in\mathbb{N}].
\end{equation}
For a given vector $\mathbf{y}\in\mathbb{R}^n$, the hyperplane $\mathcal{M}_{\mathbf{y}}$, defined by \eqref{hyperplane My}, has the form
\begin{equation}\label{hyperplane My l1}
    \mathcal{M}_{\mathbf{y}}:=\{\mathbf{x} \in \ell_1(\mathbb{N}): \mathbf{V}\mathbf{x}=\mathbf{y}\}.
\end{equation}
With the notation above, the MNI problem in $\ell_1(\mathbb{N})$ is formulated as 
\begin{equation}\label{MNI in l1}
    \inf \left\{\left\|\mathbf{x}\right\|_{1}: \mathbf{x} \in \mathcal{M}_{\mathbf{y}}\right\}.
\end{equation} 
By introducing a loss function $\mathcal{Q}_{\mathbf{y}}: \mathbb{R}^{n}\rightarrow \mathbb{R}_{+}$ and a regularization parameter $\lambda>0$, the regularization problem in $\ell_1(\mathbb{N})$ has the form   
\begin{equation}\label{eq: regularization problem l1}
    \inf \left\{\mathcal{Q}_{\mathbf{y}}(\mathbf{V}\mathbf{x})+\lambda\|\mathbf{x}\|_{1}: \mathbf{x}\in \ell_1(\mathbb{N})\right\}.
\end{equation}
In this section, we still denote by $\rm{S}(\mathbf{y})$ and $\rm{R}(\mathbf{y})$ the solution sets of the MNI problem \eqref{MNI in l1} and the regularization problem \eqref{eq: regularization problem l1}, respectively.

We now turn to establishing the sparse representer theorem for the solutions of problems \eqref{MNI in l1} and \eqref{eq: regularization problem l1}. We begin with verifying that the RKBS $\ell_1(\mathbb{N})$ satisfies Assumptions (A1) and (A2). To this end, we need a result about the subdifferential of the $\ell_{\infty}$ norm at any $\mathbf{v}\in c_0(\mathbb{N})$. For each $\mathbf{v}:=[v_j:j\in\mathbb{N}]\in c_0(\mathbb{N})$, by $\mathbb{N}(\mathbf{v})$ we denote the index set where the sequence $\mathbf{v}$ achieves its supremum norm $\|\mathbf{v}\|_\infty$, namely, 
\begin{equation}\label{section l1 N(v)}
    \mathbb{N}(\mathbf{v}):=\left\{j\in\mathbb{N}:|v_j|=\|\mathbf{v}\|_\infty\right\}.
\end{equation}
Note that the sequence $\mathbf{v}\in c_0(\mathbb{N})$ goes to $0$ while $j$ approaches to infinity and hence the index set $\mathbb{N}(\mathbf{v})$ is of finite cardinality. Let $n(\mathbf{v})$ denote the cardinality of $\mathbb{N}(\mathbf{v})$. %
We also introduce for each $\mathbf{v}:=[v_j:j\in\mathbb{N}]\in c_0(\mathbb{N})$ a subset of $\ell_1(\mathbb{N})$ as 
\begin{equation}\label{def: set Vu}
    \Omega(\mathbf{v}):=\left\{\mathrm{sign}(v_j)K(\cdot,j):j\in\mathbb{N}(\mathbf{v})\right\}.
\end{equation}
As has been shown in \cite{cheng2021minimum}, it holds for any $\mathbf{v}\in c_0(\mathbb{N})\backslash\{\mathbf{0}\}$ that 
\begin{equation}\label{subdifferential of infinity in c0}
    \partial\|\cdot\|_\infty(\mathbf{v})=\mathrm{co}(\Omega(\mathbf{v})).
\end{equation} 
This together with noting that $\Omega(\mathbf{v})$ is a finite set further leads to \begin{equation}\label{extreme points of partial infinity}
    \mathrm{ext}\left(\partial\|\cdot\|_\infty(\mathbf{v})\right)=\Omega(\mathbf{v}).
\end{equation} 
The next lemma shows that the RKBS $\ell_1(\mathbb{N})$ satisfies Assumptions (A1) and (A2). 

\begin{lemma}\label{lemma: A1-A2-l1}
If $\mathbf{v}_{j}, j \in \mathbb{N}_{n}$, are linearly independent elements in $c_0(\mathbb{N})$ and  $\mathcal{V}$ is defined by \eqref{V_span_l1}, then the space $\ell_1(\mathbb{N})$ satisfies Assumption (A1) with $X_{\mathbf{v}}':=\mathbb{N}(\mathbf{v})$ for any $\mathbf{v}\in\mathcal{V}$ and Assumption (A2) with $C:=1$.
\end{lemma}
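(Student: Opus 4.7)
The plan is to verify both assumptions by directly inspecting the already-available descriptions in the paper: equation \eqref{extreme points of partial infinity} pins down the extreme points of the subdifferential of $\|\cdot\|_\infty$, and the explicit form \eqref{reproducing kernel of lp} of the reproducing kernel makes the $\ell_1$-norm computation in (A2) immediate.

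For Assumption (A1), fix a nonzero $\mathbf v=[v_j:j\in\mathbb N]\in\mathcal V\subseteq c_0(\mathbb N)$. Because $v_j\to 0$, the index set $\mathbb N(\mathbf v)$ defined in \eqref{section l1 N(v)} is finite, and hence qualifies as the finite subset $X'_{\mathbf v}$ of $X'=\mathbb N$ required by (A1). Substituting the definition \eqref{def: set Vu} of $\Omega(\mathbf v)$ into the identity \eqref{extreme points of partial infinity} (taken with $\mathcal B_*=c_0(\mathbb N)$, so that the subdifferential lives in $(c_0(\mathbb N))^*=\ell_1(\mathbb N)=\mathcal B$) yields
\[
\mathrm{ext}\bigl(\partial\|\cdot\|_\infty(\mathbf v)\bigr)
= \{\mathrm{sign}(v_j)\,K(\cdot,j): j\in \mathbb N(\mathbf v)\}.
\]
Since $\mathrm{sign}(v_j)\in\{-1,+1\}$ for every $j\in\mathbb N(\mathbf v)$, the right-hand side is contained in $\{-K(\cdot,j),K(\cdot,j):j\in\mathbb N(\mathbf v)\}$, proving (A1) with $X'_{\mathbf v}:=\mathbb N(\mathbf v)$.

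For Assumption (A2), fix $m\in\mathbb N$, distinct indices $x'_j\in\mathbb N$ for $j\in\mathbb N_m$, and a vector $\bm\alpha=[\alpha_j:j\in\mathbb N_m]\in\mathbb R^m$. By the explicit form \eqref{reproducing kernel of lp}, each $K(\cdot,x'_j)$ is the standard coordinate sequence supported solely at index $x'_j$. Since the $x'_j$ are distinct, the supports of $K(\cdot,x'_1),\ldots,K(\cdot,x'_m)$ are pairwise disjoint, so the linear combination $\sum_{j\in\mathbb N_m}\alpha_j K(\cdot,x'_j)$ is the sequence taking the value $\alpha_j$ at position $x'_j$ and $0$ elsewhere. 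Its $\ell_1(\mathbb N)$-norm equals $\sum_{j\in\mathbb N_m}|\alpha_j|=\|\bm\alpha\|_1$, establishing (A2) with $C:=1$.

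There is essentially no obstacle here: the lemma is a pure verification, with the only nontrivial ingredient being equation \eqref{extreme points of partial infinity} (which has already been recorded from \cite{cheng2021minimum} just prior to the lemma statement). Everything else reduces to the definitions of the $\ell_1$-norm and of the coordinate-vector kernel.
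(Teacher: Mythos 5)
Your proof is correct and follows essentially the same route as the paper: it invokes the characterization \eqref{extreme points of partial infinity} of $\mathrm{ext}(\partial\|\cdot\|_\infty(\mathbf v))$ as $\Omega(\mathbf v)$ together with finiteness of $\mathbb N(\mathbf v)$ for $\mathbf v\in c_0(\mathbb N)$ to get (A1), and the identification of $K(\cdot,j)$ with the coordinate vector $\mathbf e_j$ to get (A2) with $C=1$. No gaps.
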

\begin{proof}
    We first show that Assumption (A1) holds. It follows from definition  \eqref{def: set Vu} and equation \eqref{extreme points of partial infinity} that for any nonzero $\mathbf{v}\in\mathcal{V}$, there holds $$\mathrm{ext}\left(\partial\|\cdot\|_\infty(\mathbf{v})\right)=\left\{\mathrm{sign}(v_j)K(\cdot,j):j\in\mathbb{N}(\mathbf{v})\right\}.$$
    That is, for any nonzero $\mathbf{v}\in\mathcal{V}$, there exists a finite subset $X_{\mathbf{v}}':=\mathbb{N}(\mathbf{v})$ of $ X':=\mathbb{N}$ such that
\begin{equation*}
    \mathrm{ext}\left(\partial\|\cdot\|_\infty(\mathbf{v})\right)\subset\{-K(\cdot,j), K(\cdot,j):j\in X_{\mathbf{v}}'\}. 
\end{equation*}  
Thus, the RKBS $\ell_1(\mathbb{N})$ satisfies Assumption (A1) with  $X_{\mathbf{v}}':=\mathbb{N}(\mathbf{v})$ for any $\mathbf{v}\in\mathcal{V}$. 
   
   We next verify that Assumption (A2) holds. Note that for each $j\in\mathbb{N},$ the kernel session $K(\cdot,j)$ coincides with the vector $\mathbf{e}_j$. 
   As a result, for any $m\in\mathbb{N}$, distinct points $l_j\in \mathbb{N}$, $j\in\mathbb{N}_m$, and $\bm{\alpha}=[\alpha_j:j\in\mathbb{N}_m]\in\mathbb{R}^m$, there holds 
   $$
   \left\|\sum\limits_{j\in\mathbb{N}_m}\alpha_j K(\cdot, l_j)\right\|_1=\|\bm{\alpha}\|_1.
   $$
   Clearly, the RKBS $\ell_1(\mathbb{N})$ satisfies Assumption (A2) with $C:=1$.
\end{proof}

We are ready to specialize the sparse representer theorem \ref{theorem: sparser representer for MNI} to the MNI problem \eqref{MNI in l1}. For each $\mathbf{v}:=[v_j:j\in\mathbb{N}]\in c_0(\mathbb{N})$, we define a truncation matrix of $\mathbf{V}$ as follows.  Suppose that $\mathbb{N}(\mathbf{v})=\{k_j\in\mathbb{N}:j\in\mathbb{N}_{n(\mathbf{v})}\}$. We truncate the semi-infinite matrix $\mathbf{V}$ with the form \eqref{mathbf T} by throwing away the columns with index not appearing in $\mathbb{N}(\mathbf{v})$. 
Specifically, we define the truncation matrix 
\begin{equation}\label{def: interpolation matrix}
    \mathbf{V}_{\mathbf{v}}:=[v_{i,k_j}:i\in\mathbb{N}_n, j\in\mathbb{N}_{n(\mathbf{v})}]\in\mathbb{R}^{n\times n(\mathbf{v})}.
\end{equation}

\begin{theorem}\label{theorem: representer theorem for l1 with rank}
Suppose that $\mathbf{v}_{j}$, $j \in \mathbb{N}_{n}$, are linearly independent elements in $c_0(\mathbb{N})$ and $\mathbf{y}\in \mathbb{R}^{n}\backslash\{\mathbf{0}\}$. Let $\mathcal{V}$ and $\mathcal{M}_{\mathbf{y}}$ be defined by \eqref{V_span_l1} and \eqref{hyperplane My l1}, respectively. If  $\hat{\mathbf{v}}\in\mathcal{V}$ satisfies 
\begin{equation}\label{non empty set: l1 case}
(\|\hat{\mathbf{v}}\|_{\infty}\mathrm{co}(\Omega(\hat{\mathbf{v}})))\cap\mathcal{M}_{\mathbf{y}}\neq\emptyset,
\end{equation}
$\mathbb{N}(\hat{\mathbf{v}})$ and $\mathbf{V}_{\hat{\mathbf{v}}}$ are defined by \eqref{section l1 N(v)} and \eqref{def: interpolation matrix} with $\mathbf{v}:=\hat{\mathbf{v}}$, respectively, then for any $\hat{\mathbf{x}}\in\mathrm{ext}\left(\rm{S}({\mathbf{y}})\right)$,  
there exist $\hat{\alpha}_j\neq 0$, $j\in\mathbb{N}_{M}$, with $\sum_{j\in\mathbb{N}_{M}}|\hat{\alpha}_j|=\|\hat{\mathbf{v}}\|_{\infty}$ and $k_j\in\mathbb{N}(\hat{\mathbf{v}})$, $j\in\mathbb{N}_{M}$, such that 
\begin{equation}\label{eq: l1 finite support RKBS}
    \hat{\mathbf{x}}=\sum_{j\in\mathbb{N}_M}\hat{\alpha}_j K(\cdot, k_j).
\end{equation}
for some positive integer $M\leq\mathrm{rank}(\mathbf{V}_{\hat{\mathbf{v}}})$.
\end{theorem}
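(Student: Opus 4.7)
The plan is to deduce this result as a direct specialization of the general sparse representer theorem (Theorem \ref{theorem: sparser representer for MNI}) to the concrete RKBS $\mathcal{B}:=\ell_1(\mathbb{N})$ with pre-dual $\mathcal{B}_*:=c_0(\mathbb{N})$. The first step is to invoke Lemma \ref{lemma: A1-A2-l1}, which ensures that both Assumptions (A1) and (A2) are in force, with $X'_{\mathbf{v}}:=\mathbb{N}(\mathbf{v})$ for every nonzero $\mathbf{v}\in\mathcal{V}$ and with constant $C:=1$. This is precisely the setup needed to quote Theorem \ref{theorem: sparser representer for MNI}.

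Next I would translate the hypothesis \eqref{non empty set: l1 case} into the abstract condition \eqref{Non-empty-set}. By equation \eqref{subdifferential of infinity in c0}, one has
\[
\|\hat{\mathbf{v}}\|_{\infty}\,\mathrm{co}(\Omega(\hat{\mathbf{v}}))
= \|\hat{\mathbf{v}}\|_{\infty}\,\partial\|\cdot\|_{\infty}(\hat{\mathbf{v}})
= \|\hat{\mathbf{v}}\|_{\mathcal{B}_*}\,\partial\|\cdot\|_{\mathcal{B}_*}(\hat{\mathbf{v}}),
\]
so \eqref{non empty set: l1 case} is literally \eqref{Non-empty-set} for $\hat{\nu}:=\hat{\mathbf{v}}$. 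I would then identify the finite-dimensional data matrix: writing $\mathbb{N}(\hat{\mathbf{v}})=\{k_j:j\in\mathbb{N}_{n(\hat{\mathbf{v}})}\}$, the reproducing kernel \eqref{reproducing kernel of lp} gives $\langle\mathbf{v}_i,K(\cdot,k_j)\rangle_{\ell_1(\mathbb{N})}=v_{i,k_j}$, so $\mathbf{L}_{\hat{\nu}}$ of \eqref{finite matrix V hat nu} coincides with the truncation $\mathbf{V}_{\hat{\mathbf{v}}}$ of \eqref{def: interpolation matrix}.

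With these identifications in hand, Theorem \ref{theorem: sparser representer for MNI} applies directly: every $\hat{\mathbf{x}}\in\mathrm{ext}(\mathrm{S}(\mathbf{y}))$ admits the kernel expansion \eqref{eq: RKBS expansion of p}, which in the present notation reads \eqref{eq: l1 finite support RKBS}, with $k_j\in\mathbb{N}(\hat{\mathbf{v}})$, nonzero coefficients $\hat{\alpha}_j$, $j\in\mathbb{N}_M$, for some $M\le \mathrm{rank}(\mathbf{L}_{\hat{\nu}})=\mathrm{rank}(\mathbf{V}_{\hat{\mathbf{v}}})$, and $\sum_{j\in\mathbb{N}_M}|\hat{\alpha}_j|=\|\hat{\nu}\|_{\mathcal{B}_*}/C = \|\hat{\mathbf{v}}\|_\infty$ since $C=1$. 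This yields exactly the claimed representation.

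There is no genuine obstacle in this argument; it is essentially a matter of matching notation. The only point that requires a moment of care is confirming that the index set $X'_{\hat{\mathbf{v}}}$ used in the abstract theorem may be taken to equal $\mathbb{N}(\hat{\mathbf{v}})$ (so that the truncation of $\mathbf{V}$ to the columns indexed by $\mathbb{N}(\hat{\mathbf{v}})$ is the right finite matrix), and this was already established in the proof of Lemma \ref{lemma: A1-A2-l1} via the identity \eqref{extreme points of partial infinity}.
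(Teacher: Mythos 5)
Your proposal is correct and follows essentially the same route as the paper's own proof: verify \eqref{Non-empty-set} via \eqref{subdifferential of infinity in c0}, invoke Lemma \ref{lemma: A1-A2-l1} for Assumptions (A1) and (A2) with $X'_{\mathbf{v}}:=\mathbb{N}(\mathbf{v})$ and $C:=1$, identify $\mathbf{L}_{\hat{\nu}}$ with $\mathbf{V}_{\hat{\mathbf{v}}}$, and specialize Theorem \ref{theorem: sparser representer for MNI}. No gaps; the notational identifications you flag are exactly the ones the paper checks.
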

\begin{proof}
    We prove this result by specializing Theorem \ref{theorem: sparser representer for MNI} to the MNI problem in $\ell_1(\mathbb{N})$. We first note that $\ell_1(\mathbb{N})$ and $c_0(\mathbb{N})$ are a pair of RKBSs and $K$ defined by \eqref{reproducing kernel of lp} is the reproducing kernel for 
    $\ell_1(\mathbb{N})$.  Moreover, $\ell_1(\mathbb{N})$ has $c_0(\mathbb{N})$ as its pre-dual space.
    We next show that $\hat{\mathbf{v}}$ satisfies \eqref{Non-empty-set}. Substituting equation \eqref{subdifferential of infinity in c0} with $\mathbf{v}$ being replaced by $\hat{\mathbf{v}}$ into assumption \eqref{non empty set: l1 case} leads directly to   $(\|\hat{\mathbf{v}}\|_{\infty}\partial\|\cdot\|_\infty(\hat{\mathbf{v}}))\cap\mathcal{M}_{\mathbf{y}}\neq\emptyset$. That is, $\hat{\mathbf{v}}$ satisfies \eqref{Non-empty-set}.
    Finally, Lemma \ref{lemma: A1-A2-l1} guarantees that both Assumption (A1) holds with $X_{\mathbf{v}}':=\mathbb{N}(\mathbf{v})$ for any $\mathbf{v}\in\mathcal{V}$  and Assumption (A2) holds with $C:=1$. Consequently, the hypotheses of Theorem \ref{theorem: sparser representer for MNI} are satisfied. By Theorem \ref{theorem: sparser representer for MNI} with noting that $X_{\hat{\mathbf{v}}}':=\mathbb{N}(\hat{\mathbf{v}})$, $C:=1$ and the matrix $\mathbf{L}_{\hat{\mathbf{v}}}$, defined by \eqref{finite matrix V hat nu}, coincides exactly with the truncation matrix $\mathbf{V}_{\hat{\mathbf{v}}}$, any extreme point $\hat{\mathbf{x}}$ of $\rm{S}({\mathbf{y}})$ can be expressed as in equation \eqref{eq: l1 finite support RKBS}.
\end{proof}

Since the kernel session $K(\cdot,k_j)$ appearing in equation \eqref{eq: l1 finite support RKBS} has merely one nonzero entry at $k_j$ for each $j\in\mathbb{N}_M$, Theorem \ref{theorem: representer theorem for l1 with rank} shows that any extreme point of the solution set  $\rm{S}({\mathbf{y}})$ of the MNI problem \eqref{MNI in l1} has at most $\mathrm{rank}(\mathbf{V}_{\hat{\mathbf{v}}})$ nonzero components. Obviously, $\mathrm{rank}(\mathbf{V}_{\hat{\mathbf{v}}})\leq n$. According to Proposition \ref{prop: solution of dual problem gives solution of original problem} in Appendix B, the sequence $\hat{\mathbf{v}}\in\mathcal{V}$ satisfying \eqref{non empty set: l1 case} can be obtained by solving the dual problem \eqref{dual problem} with $\mathcal{B}_*:=c_0(\mathbb{N})$ and $\nu_j:=\mathbf{v}_j$, $j\in\mathbb{N}_n$. 
It has been proved in \cite{cheng2021minimum} that the resulting dual problem, as a finite dimensional optimization problem, may be solved by linear programming. Admittedly, the solutions of the dual problem may not be unique and the quantity $\mathrm{rank}(\mathbf{V}_{\hat{\mathbf{v}}})$ hinges on the choice of $\hat{\mathbf{v}}\in\mathcal{V}$, or the choice of the solution of the corresponding dual problem. 
We further remark that a data-independent representer theorem for MNI problem \eqref{MNI in l1}, established in \cite{unser2016representer}, expresses the extreme point $\hat{\mathbf{x}}$ of the solution set in terms of a linear combination of $n$ extreme points of the unit ball in $\ell_1(\mathbb{N})$, which is data-independent. Theorem  \ref{theorem: representer theorem for l1 with rank} differs from the data-independent representer theorem of \cite{unser2016representer} in the kernel representation \eqref{eq: l1 finite support RKBS}, where $k_j$, $j\in\mathbb{N}_M$, depend on the element $\hat{\mathbf{v}}$, which is a linear combination of the given data $\mathbf{v}_j$, $j\in\mathbb{N}_n$. Moreover, it follows from definition \eqref{section l1 N(v)} that the set $\mathbb{N}(\hat{\mathbf{v}})$ has a finite cardinality while there are infinitely many extreme points of the unit ball in $\ell_1(\mathbb{N})$.

Below, we present a specific example to demonstrate that the quantity $\mathrm{rank}(\mathbf{V}_{\hat{\mathbf{v}}})$ could be equal or strictly less than
$n$ depending on the distinct solution of the dual problem. We choose the functionals $\mathbf{v}_1,\mathbf{v}_2\in c_0(\mathbb{N})$ as
\begin{equation*}
    \begin{aligned}
        \mathbf{v}_{1} :=\left[\frac{1}{n}:n\in\mathbb{N}\right], \    \mathbf{v}_{2} :=\left[\frac{1}{(-2)^{n-1}}:n\in\mathbb{N}\right],
\end{aligned}
\end{equation*}
and consider the MNI problem \eqref{MNI in l1} with $\mathbf{y}:=[1,1]^\top$. In this case, the solution set of the dual problem \eqref{dual problem} can be characterized as
$$
\left\{\hat{\mathbf{c}}:=[\hat{c}_1, \hat{c}_2]^\top\in\mathbb{R}^2:\hat{c}_1+\hat{c}_2=1, -\frac{1}{2}\leq \hat{c}_1\leq \frac{3}{2}\right\},
$$
and the optimal value is $m_0=1$. 
We choose the vector $\hat{\mathbf{v}}$ satisfying \eqref{non empty set: l1 case} according to two distinct elements in the above solution set. We first select $\hat{\mathbf{c}}_1:=[-\frac{1}{2},\frac{3}{2}]^\top$ as a solution of the dual problem. 
Proposition \ref{prop: solution of dual problem gives solution of original problem} in Appendix B ensures that $\hat{\mathbf{v}}_1=-\frac{1}{2}\mathbf{v}_1+\frac{3}{2}\mathbf{v}_2$ satisfies \eqref{non empty set: l1 case}. Clearly, $\mathbb{N}(\hat{\mathbf{v}}_1)=\{1,2\}$ and $n(\hat{\mathbf{v}}_1)=2$. It follows that the matrix $\mathbf{V}_{\hat{\mathbf{v}}_1}$ has the form 
\begin{equation*}
    \mathbf{V}_{\hat{\mathbf{v}}_1}:=
    \left[\begin{array}{rr}
1 & \frac{1}{2} \\
1 & -\frac{1}{2}
\end{array}\right],
\end{equation*}
and $\mathrm{rank}(\mathbf{V}_{\hat{\mathbf{v}}_1})=2$. We next select an alternative solution of the dual problem, that is, $\hat{\mathbf{c}}_2:=[0,1]^\top$. Accordingly, $\hat{\mathbf{v}}_2=\mathbf{v}_2$ satisfies \eqref{non empty set: l1 case}. It follows from $\mathbb{N}(\hat{\mathbf{v}}_2)=\{1\}$, $n(\hat{\mathbf{v}}_2)=1$ that $\mathbf{V}_{\hat{\mathbf{v}}_2}=[1,1]^\top$ and $\mathrm{rank}(\mathbf{V}_{\hat{\mathbf{v}}_2})=1$. Notice that $\mathrm{rank}(\mathbf{V}_{\hat{\mathbf{v}}_1})$ is equal to the number of the given data points while $\mathrm{rank}(\mathbf{V}_{\hat{\mathbf{v}}_2})$
is strictly less than the number.
In this example, the MNI problem \eqref{MNI in l1} with $\mathbf{y}:=[1,1]^\top$ has a unique solution $\hat{\mathbf{x}}=[1,0,0,\ldots]\in\ell_1(\mathbb{N})$ with sparsity level $l=1$. Clearly, the quantity   $\mathrm{rank}(\mathbf{V}_{\hat{\mathbf{v}}_1})$ provides a precise characterization for the sparsity of the solution.

 By applying Theorem \ref{theorem: sparser representer theorem regularization} to the regularization problem \eqref{eq: regularization problem l1}, we get the following sparse representer theorem for the solutions.

\begin{theorem}\label{theorem: representer for regularization in l1}
    Suppose that $\mathbf{v}_{j}, j \in \mathbb{N}_{n}$, are linearly independent elements in $c_0(\mathbb{N})$, $\mathbf{y}_0\in \mathbb{R}^{n}$ and $\lambda>0$. Let $\mathcal{V}$ be defined by \eqref{V_span_l1}. If $\mathcal{Q}_{\mathbf{y}_0}$ is lower semi-continuous and convex, then every nonzero $\hat{\mathbf{x}}\in\mathrm{ext}\left(\rm{R}({\mathbf{y}}_0)\right)$ has the form \eqref{eq: l1 finite support RKBS} for some $\hat{\mathbf{v}}\in\mathcal{V}$, positive integer $M\leq\mathrm{rank}(\mathbf{V}_{\hat{\mathbf{v}}})$, $\alpha_j\neq 0$, $j\in\mathbb{N}_{M}$, with $\sum_{j\in\mathbb{N}_{M}}|\alpha_j|=\|\hat{\mathbf{v}}\|_{\infty}$ and $k_j\in\mathbb{N}(\hat{\mathbf{v}})$, $j\in\mathbb{N}_{M}$. 
\end{theorem}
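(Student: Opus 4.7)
The plan is to deduce this theorem as a direct specialization of Statement 2 of Theorem \ref{theorem: sparser representer theorem regularization} to the concrete pair of RKBSs $(\ell_1(\mathbb{N}),c_0(\mathbb{N}))$, under the identifications $\mathcal{B}:=\ell_1(\mathbb{N})$, $\mathcal{B}_*:=c_0(\mathbb{N})$, reproducing kernel $K$ given by \eqref{reproducing kernel of lp}, linear functionals $\nu_j:=\mathbf{v}_j$, and regularizer $\varphi(t):=t$. With this choice of $\varphi$, the abstract problem \eqref{regularization problem special} coincides with the $\ell_1$-regularization problem \eqref{eq: regularization problem l1}, so all that is required is to check the hypotheses of Statement 2 in this setting and then read off the conclusion.

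The hypothesis check is short. The loss $\mathcal{Q}_{\mathbf{y}_0}$ is lower semi-continuous and convex by assumption. The regularizer $\varphi(t)=t$ on $\mathbb{R}_+$ is lower semi-continuous, convex, differentiable, strictly increasing with $\varphi'\equiv 1$, and coercive. Lemma \ref{lemma: A1-A2-l1} guarantees that $\ell_1(\mathbb{N})$ with the linearly independent $\mathbf{v}_j\in c_0(\mathbb{N})$ satisfies Assumption (A1) with $X'_{\mathbf{v}}=\mathbb{N}(\mathbf{v})$ for every nonzero $\mathbf{v}\in\mathcal{V}$ and Assumption (A2) with constant $C=1$. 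Hence every premise of Theorem \ref{theorem: sparser representer theorem regularization} Statement 2 holds, and that statement yields, for any nonzero extreme point $\hat{\mathbf{x}}$ of $\rm{R}(\mathbf{y}_0)$, a representation of the form \eqref{eq: solution hat f regularization} for some $\hat{\nu}\in\mathcal{V}$, positive integer $M\leq\mathrm{rank}(\mathbf{L}_{\hat{\nu}})$, points $x'_j\in X'_{\hat{\nu}}$, and nonzero coefficients $\hat{\alpha}_j$ with $\sum_{j\in\mathbb{N}_M}|\hat{\alpha}_j|=\|\hat{\nu}\|_{\mathcal{B}_*}/C$.

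To finish, I would translate each ingredient back into $\ell_1$-language. Writing $\hat{\mathbf{v}}$ for $\hat{\nu}\in\mathcal{V}$, one has $X'_{\hat{\mathbf{v}}}=\mathbb{N}(\hat{\mathbf{v}})$, $\|\hat{\nu}\|_{\mathcal{B}_*}=\|\hat{\mathbf{v}}\|_\infty$, and $C=1$, so the normalization becomes $\sum_{j\in\mathbb{N}_M}|\hat{\alpha}_j|=\|\hat{\mathbf{v}}\|_\infty$. The one item worth verifying explicitly, which is the only point where a reader might want a line of computation, is that the data matrix $\mathbf{L}_{\hat{\nu}}$ from \eqref{finite matrix V hat nu} coincides with the truncation $\mathbf{V}_{\hat{\mathbf{v}}}$ from \eqref{def: interpolation matrix}: writing $\mathbb{N}(\hat{\mathbf{v}})=\{k_j:j\in\mathbb{N}_{n(\hat{\mathbf{v}})}\}$, the $(i,j)$-entry of $\mathbf{L}_{\hat{\nu}}$ is $\langle\mathbf{v}_i,K(\cdot,k_j)\rangle_{\ell_1(\mathbb{N})}=v_{i,k_j}$, matching \eqref{def: interpolation matrix}. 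With $x'_j$ renamed $k_j\in\mathbb{N}(\hat{\mathbf{v}})$, the representation \eqref{eq: solution hat f regularization} becomes exactly \eqref{eq: l1 finite support RKBS}, completing the proof. No genuine obstacle is anticipated, since all substantive work has already been absorbed into Theorem \ref{theorem: sparser representer theorem regularization} and Lemma \ref{lemma: A1-A2-l1}; the proof is essentially a bookkeeping argument.
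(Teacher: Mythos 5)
Your proposal is correct and follows essentially the same route as the paper: specialize Statement 2 of Theorem \ref{theorem: sparser representer theorem regularization} to the pair $(\ell_1(\mathbb{N}),c_0(\mathbb{N}))$ with $\varphi(t):=t$, invoke Lemma \ref{lemma: A1-A2-l1} for Assumptions (A1) and (A2) with $X'_{\mathbf{v}}=\mathbb{N}(\mathbf{v})$ and $C=1$, and identify $\mathbf{L}_{\hat{\mathbf{v}}}$ with $\mathbf{V}_{\hat{\mathbf{v}}}$. Your explicit entrywise check that $\langle\mathbf{v}_i,K(\cdot,k_j)\rangle_{\ell_1(\mathbb{N})}=v_{i,k_j}$ is a small bonus the paper relegates to its earlier MNI specialization, but otherwise the arguments coincide.
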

 
\begin{proof}
    Again, we point out that $\ell_1(\mathbb{N})$ and $c_0(\mathbb{N})$ are a pair of RKBSs, $K$ defined by \eqref{reproducing kernel of lp} is the reproducing kernel for 
    $\ell_1(\mathbb{N})$   and in addition, $\ell_1(\mathbb{N})$ has $c_0(\mathbb{N})$ as its pre-dual space.
   The space $\ell_1(\mathbb{N})$, guaranteed by Lemma \ref{lemma: A1-A2-l1}, satisfies Assumption (A1) with $X_{\mathbf{v}}':=\mathbb{N}(\mathbf{v})$ for any $\mathbf{v}\in\mathcal{V}$ and Assumption (A2) with $C:=1$. In this case, the function $\varphi(t):=t$, $t\in\mathbb{R}_{+}$, is continuous, convex and strictly increasing. That is, the hypotheses of Theorem \ref{theorem: sparser representer theorem regularization} are all satisfied. Hence, by Theorem \ref{theorem: sparser representer theorem regularization} with $X_{\hat{\mathbf{v}}}':=\mathbb{N}(\hat{\mathbf{v}})$, $C:=1$ and $\mathbf{L}_{\hat{\mathbf{v}}}:=\mathbf{V}_{\hat{\mathbf{v}}}$, any nonzero extreme point $\hat{\mathbf{x}}$ of $\rm{R}(\mathbf{y}_0)$  has the form \eqref{eq: l1 finite support RKBS} for some $\hat{\mathbf{v}}\in\mathcal{V}$, positive integer $M\leq\mathrm{rank}(\mathbf{V}_{\hat{\mathbf{v}}})$, $\alpha_j\neq 0$, $j\in\mathbb{N}_{M}$, with $\sum_{j\in\mathbb{N}_{M}}|\alpha_j|=\|\hat{\mathbf{v}}\|_{\infty}$ and $k_j\in\mathbb{N}(\hat{\mathbf{v}})$, $j\in\mathbb{N}_{M}$.
\end{proof}

The regularization parameter $\lambda$ involved in 
the regularization problem \eqref{eq: regularization problem l1} can be used to further promote the sparsity level of the
solution. As a consequence of Theorem \ref{theorem: lambda is greater than}, we get the following choices of the regularization parameter for sparse solutions of problem \eqref{eq: regularization problem l1}. For each $\mathbf{y}\in\mathbb{R}^n$ and each $\lambda>0$, the subset $\mathcal{D}_{\lambda,\mathbf{y}}$, defined by \eqref{Dy}, takes the form 
\begin{equation}\label{Dy: l1 case}
\mathcal{D}_{\lambda,\mathbf{y}}:=\{\mathbf{V}\mathbf{x}:\mathbf{x}\in\rm{R}(\mathbf{y})\}. 
\end{equation}

\begin{theorem}\label{theorem: lambda is greater than, l1 case}
Suppose that $\mathbf{v}_{j}, j \in \mathbb{N}_{n}$, are linearly independent elements in $c_0(\mathbb{N})$, $\mathbf{y}_0 \in \mathbb{R}^{n}$, $\lambda>0$ and that $\mathcal{Q}_{\mathbf{y}_0}$ is lower semi-continuous and convex. Let  $\mathcal{D}_{\lambda,\mathbf{y}_0}$ be defined by \eqref{Dy: l1 case} with $\mathbf{y}:=\mathbf{y}_0$, $\hat{\mathbf{z}}\in\mathcal{D}_{\lambda,\mathbf{y}_0}\backslash\{\mathbf{0}\}$ and let $\mathcal{V}$ be defined by \eqref{V_span_l1}, $\hat{\mathbf{v}}\in\mathcal{V}$ satisfy \eqref{non empty set: l1 case} with $\mathbf{y}:=\hat{\mathbf{z}}$ and $\mathbb{N}(\hat{\mathbf{v}})$,  $\mathbf{V}_{\hat{\mathbf{v}}}$ be defined by \eqref{section l1 N(v)} and \eqref{def: interpolation matrix} with $\mathbf{v}:=\hat{\mathbf{v}}$, respectively. Then problem \eqref{eq: regularization problem l1} with $\mathbf{y}:=\mathbf{y}_0$ has a  solution  $\hat{\mathbf{x}}=\sum_{i\in\mathbb{N}_{l}}\hat{\alpha}_{k_i}K(\cdot,k_i)$ with $\hat\alpha_{k_i}\in\mathbb{R}\setminus\{0\}$, $k_i\in \mathbb{N}(\hat{\mathbf{v}})$, $i\in\mathbb{N}_{l}$ for some $l\in\mathbb{N}_{n(\hat{\mathbf{v}})}$ if and only if there exists 
$\mathbf{a}\in\partial \mathcal{Q}_{\mathbf{y}_0}(\mathbf{V}_{\hat{\mathbf{v}}}\hat{\bm{\alpha}})$ with $\hat{\bm{\alpha}}:=\sum_{i\in\mathbb{N}_{l}}\hat{\alpha}_{k_i}\mathbf{e}_{k_i}$ such that  
\begin{equation}\label{lambda is greater than 1 l1 case}
\lambda=-(\mathbf{V}_{\hat{\mathbf{v}}}^\top\mathbf{a})_{k_i}\mathrm{sign}(\hat\alpha_{k_i}),  \ \ i\in\mathbb{N}_l,\ \mbox{and}\ \lambda\geq |(\mathbf{V}_{\hat{\mathbf{v}}}^\top\mathbf{a})_{j}|, \ \ j\in\mathbb{N}_{n(\hat{\mathbf{v}})}\backslash\{k_i:i\in\mathbb{N}_l\}.
\end{equation}
\end{theorem}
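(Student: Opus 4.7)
The plan is to derive Theorem \ref{theorem: lambda is greater than, l1 case} as a direct specialization of the general Theorem \ref{theorem: lambda is greater than} to the concrete RKBS $\ell_1(\mathbb{N})$, so the work consists of verifying each hypothesis in turn and then translating the abstract conclusion into the stated scalar inequalities.

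First, I would record the structural facts about the space: $\ell_1(\mathbb{N})$ and $c_0(\mathbb{N})$ are a pair of RKBSs, $c_0(\mathbb{N})$ is a pre-dual of $\ell_1(\mathbb{N})$, and $K$ defined by \eqref{reproducing kernel of lp} is the reproducing kernel; these are all stated in the preceding text. The linearly independent sampling functionals $\mathbf{v}_j\in c_0(\mathbb{N})$ and the vector $\mathbf{y}_0\in\mathbb{R}^n$ take the roles of $\nu_j$ and $\mathbf{y}_0$ in Theorem \ref{theorem: lambda is greater than}. Lemma \ref{lemma: A1-A2-l1} supplies Assumptions (A1) and (A2) with $X'_{\mathbf{v}}=\mathbb{N}(\mathbf{v})$ and $C=1$, so in particular the finite-dimensional matrix $\mathbf{L}_{\hat\nu}$ from \eqref{finite matrix V hat nu} coincides exactly with the truncation matrix $\mathbf{V}_{\hat{\mathbf{v}}}$ of \eqref{def: interpolation matrix}.

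Next I would identify the regularizer. In problem \eqref{eq: regularization problem l1} the regularization term is simply $\lambda\|\mathbf{x}\|_1$, i.e.\ $\varphi(t)=t$. This $\varphi$ is convex, differentiable, strictly increasing, and coercive, with $\varphi'(t)=1$ for every $t$, while $\mathcal{Q}_{\mathbf{y}_0}$ is assumed lower semi-continuous and convex. Consequently all the hypotheses of Theorem \ref{theorem: lambda is greater than} are in force. The $\hat{\mathbf{v}}\in\mathcal{V}$ chosen to satisfy \eqref{non empty set: l1 case} with $\mathbf{y}:=\hat{\mathbf{z}}$ plays the role of the dual element $\hat\nu$; that \eqref{non empty set: l1 case} is equivalent to the abstract condition \eqref{Non-empty-set} was already established in the proof of Theorem \ref{theorem: representer theorem for l1 with rank} by invoking \eqref{subdifferential of infinity in c0}, so I would simply cite that equivalence.

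With the hypotheses verified, Theorem \ref{theorem: lambda is greater than} states that $\hat f=\sum_{i\in\mathbb{N}_l}\hat{\alpha}_{k_i}K(\cdot,x'_{k_i})$ solves the regularization problem if and only if there exists $\mathbf{a}\in\partial\mathcal{Q}_{\mathbf{y}_0}(\mathbf{L}_{\hat\nu}\hat{\bm{\alpha}})$ for which \eqref{lambda is greater than 1} and \eqref{lambda is greater than 2} hold. Substituting $C=1$, $\varphi'(C\|\hat{\bm{\alpha}}\|_1)=1$, $\mathbf{L}_{\hat\nu}=\mathbf{V}_{\hat{\mathbf{v}}}$, and the identification $x'_{k_i}=k_i\in\mathbb{N}(\hat{\mathbf{v}})$ (so that $K(\cdot,x'_{k_i})=K(\cdot,k_i)$), the factor $C\varphi'(C\|\hat{\bm{\alpha}}\|_1)$ in the denominators of \eqref{lambda is greater than 1}--\eqref{lambda is greater than 2} collapses to $1$, leaving precisely the equalities and inequalities \eqref{lambda is greater than 1 l1 case}. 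I do not foresee a serious obstacle here; the only point requiring a little care is the bookkeeping that ensures the index $k_i$ in the finite-dimensional statement corresponds to the same grid point of $\mathbb{N}$ as in the series representation of $\hat{\mathbf{x}}$, which follows directly from the construction of $\mathbf{V}_{\hat{\mathbf{v}}}$ in \eqref{def: interpolation matrix}.
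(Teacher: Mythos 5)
Your proposal is correct and follows essentially the same route as the paper's own proof: verify Assumptions (A1) and (A2) via Lemma \ref{lemma: A1-A2-l1}, note that \eqref{non empty set: l1 case} yields \eqref{Non-empty-set} as in the proof of Theorem \ref{theorem: representer theorem for l1 with rank}, apply Theorem \ref{theorem: lambda is greater than} with $\varphi(t)=t$, $\mathbf{L}_{\hat{\mathbf{v}}}=\mathbf{V}_{\hat{\mathbf{v}}}$ and $X_{\hat{\mathbf{v}}}'=\mathbb{N}(\hat{\mathbf{v}})$, and then substitute $C=1$ and $\varphi'\equiv 1$ to obtain \eqref{lambda is greater than 1 l1 case}. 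No gaps; the index bookkeeping you flag is exactly what the construction of $\mathbf{V}_{\hat{\mathbf{v}}}$ in \eqref{def: interpolation matrix} provides.
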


\begin{proof}     
     Lemma \ref{lemma: A1-A2-l1} ensures that the RKBS $\ell_1(\mathbb{N})$ satisfies Assumption (A1) with $X_{\mathbf{v}}':=\mathbb{N}(\mathbf{v})$ for any $\mathbf{v}\in\mathcal{V}$  and Assumption (A2) with $C:=1$. As has been shown in the
    proof of Theorem \ref{theorem: representer theorem for l1 with rank}, assumption \eqref{non empty set: l1 case} yields that $\hat{\mathbf{v}}$ satisfies \eqref{Non-empty-set} with $\mathbf{y}:=\hat{\mathbf{z}}$. Note that $\varphi(t):=t$, $t\in\mathbb{R}_{+}$, is continuous, convex and strictly increasing. The hypotheses of Theorem \ref{theorem: lambda is greater than} are satisfied.
    Thus, by Theorem \ref{theorem: lambda is greater than} with noting that $\mathbf{L}_{\hat{\mathbf{v}}}:=\mathbf{V}_{\hat{\mathbf{v}}}$ and $X_{\hat{\mathbf{v}}}':=\mathbb{N}(\hat{\mathbf{v}})$, problem \eqref{eq: regularization problem l1} with $\mathbf{y}:=\mathbf{y}_0$ has a solution  $\hat{\mathbf{x}}=\sum_{i\in\mathbb{N}_{l}}\hat{\alpha}_{k_i}K(\cdot,k_i)$ with $\hat\alpha_{k_i}\in\mathbb{R}\setminus\{0\}$, $k_i\in \mathbb{N}(\hat{\mathbf{v}})$, $i\in\mathbb{N}_{l}$ for some $l\in\mathbb{N}_{n(\hat{\mathbf{v}})}$ if and only if there exists 
    $\mathbf{a}\in\partial \mathcal{Q}_{\mathbf{y}_0}(\mathbf{V}_{\hat{\mathbf{v}}}\hat{\bm{\alpha}})$ with $\hat{\bm{\alpha}}:=\sum_{i\in\mathbb{N}_{l}}\hat{\alpha}_{k_i}\mathbf{e}_{k_i}$ such that  
    \begin{align}
         \lambda&=-(\mathbf{V}_{\hat{\mathbf{v}}}^\top\mathbf{a})_{k_i}\mathrm{sign}(\hat\alpha_{k_i})/(\varphi'(C\|\hat{\bm{\alpha}}\|_1)),  \ \ i\in\mathbb{N}_l,\label{lambda is greater than 11 l1 case}\\
        \lambda&\geq |(\mathbf{V}_{\hat{\mathbf{v}}}^\top\mathbf{a})_{j}|/(C\varphi'(C\|\hat{\bm{\alpha}}\|_1)), \ \ j\in\mathbb{N}_{n(\hat{\mathbf{v}})}\backslash\{k_i:i\in\mathbb{N}_l\}.\label{lambda is greater than 22 l1 case}
    \end{align}
    Substituting $\varphi'(t)=1$, $t\in\mathbb{R}_+$ and $C=1$ into \eqref{lambda is greater than 1} and \eqref{lambda is greater than 2}, we get the desired characterization \eqref{lambda is greater than 1 l1 case}. 
\end{proof}

Theorems \ref{theorem: representer theorem for l1 with rank} and \ref{theorem: representer for regularization in l1} provide sparse representations for the solutions of the MNI problem and the regularization problem in $\ell_1(\mathbb{N})$, respectively. The sparsity of the solutions benefits from the capacity of  $\ell_1(\mathbb{N})$ in promoting sparsity, that is,  $\ell_1(\mathbb{N})$ satisfies Assumptions (A1) and (A2). To close this section, we show that in general the sequence spaces $\ell_p(\mathbb{N})$, for $1<p<+\infty$, will not promote sparsity. Recall that $\ell_p(\mathbb{N})$ with $1<p<+\infty$ is the Banach space of all real sequences $\mathbf{x}:=[x_j:j\in\mathbb{N}]$ such that 
$\|\mathbf{x}\|_p:=\left(\sum_{j\in\mathbb{N}}|x_j|^p\right)^{1/p}<+\infty$. It is known that $\ell_p(\mathbb{N})$ is reflexive and then the dual space $\ell_q(\mathbb{N})$ is its pre-dual space, where $1/p+1/q=1$. 
The dual bilinear form $\langle\cdot,\cdot\rangle_{\ell_p(\mathbb{N})}$ on $\ell_q(\mathbb{N})\times\ell_p(\mathbb{N})$ is defined by
$
\langle\mathbf{v},\mathbf{x}\rangle_{\ell_p(\mathbb{N})}:=\sum_{j\in\mathbb{N}}v_j x_j,
$
for all $\mathbf{v}:=[v_j:j\in\mathbb{N}]\in \ell_q(\mathbb{N})$ and all $\mathbf{x}:=[x_j:j\in\mathbb{N}]\in\ell_p(\mathbb{N})$. Clearly, $\ell_p(\mathbb{N})$, $\ell_q(\mathbb{N})$ are a pair of RKBSs and the function $K$ defined by \eqref{reproducing kernel of lp} is also the reproducing kernel of $\ell_p(\mathbb{N})$. We claim that $\ell_p(\mathbb{N})$ with $1<p<+\infty$ does not satisfy Assumptions (A1) and (A2). Indeed, the subdifferential of the norm $\|\cdot\|_q$ at any $\mathbf{v}:=[v_k:k\in\mathbb{N}]\in\ell_q(\mathbb{N})$ is a singleton, that is, \begin{equation}\label{subdifferential-lp}
\partial\|\cdot\|_q(\mathbf{v}):=\left\{\mathbf{u}:=\left[v_k\left|v_k\right|^{q-2}/\|\mathbf{v}\|_q^{q-1}:k\in\mathbb{N}\right]\right\}.
\end{equation} 
The vector $\mathbf{u}$ usually does not have the form $\alpha K(\cdot,j)$ for some $\alpha\in\{-1,1\}$ and $j\in\mathbb{N}$ unless $\mathbf{v}$ is in such a form. That is, Assumption (A1) does not hold. For any $m\in\mathbb{N}$, distinct points $l_j\in \mathbb{N}$, $j\in\mathbb{N}_m$, and $\bm{\alpha}=[\alpha_j:j\in\mathbb{N}_m]\in\mathbb{R}^m$, there holds 
$$
\left\|\sum\limits_{j\in\mathbb{N}_m}\alpha_j K(\cdot, l_j)\right\|_p=\|\bm{\alpha}\|_p,
$$
which yields that Assumption (A2) is not satisfied either for $\ell_p(\mathbb{N})$ with $1<p<+\infty$. 

We remark that Assumptions (A1) and (A2) are sufficient conditions for an RKBS to enjoy the ability of promoting the sparsity of the learning solutions. Even though the spaces $\ell_p(\mathbb{N})$ for $1<p<+\infty$ do not satisfy these assumptions, we still need to understand why these RKBSs cannot promote sparsity. 
To this end, we consider the MNI problem in $\ell_p(\mathbb{N})$ with $1<p<+\infty$. Suppose that $\mathbf{v}_j:=[v_{j,k}:k\in\mathbb{N}]$, $j\in\mathbb{N}_n$, are a finite number of linearly independent elements in $\ell_q(\mathbb{N})$. The operator $\mathcal{L}:\ell_p(\mathbb{N})\rightarrow\mathbb{R}^n$, defined by \eqref{operator L}, 
can also be taken as the semi-infinite matrix $\mathbf{V}$ with the form 
\eqref{mathbf T}. For a given vector $\mathbf{y}\in\mathbb{R}^n$, the subset $\mathcal{M}_{\mathbf{y}}$ of $\ell_p(\mathbb{N})$, defined by \eqref{hyperplane My}, has the form
\begin{equation}\label{hyperplane My lp}
    \mathcal{M}_{\mathbf{y}}:=\{\mathbf{x} \in \ell_p(\mathbb{N}): \mathbf{V}\mathbf{x}=\mathbf{y}\}.
\end{equation}
The MNI problem with $\mathbf{y}$ in $\ell_p(\mathbb{N})$ with $1<p<+\infty$ is formulated as 
\begin{equation}\label{MNI in lp}
    \inf \left\{\left\|\mathbf{x}\right\|_{p}: \mathbf{x} \in \mathcal{M}_{\mathbf{y}}\right\},
\end{equation} 
and its dual problem has the form 
\begin{equation}\label{MNI in lp dual}
\mathrm{sup}\left\{\sum_{j\in\mathbb{N}_n}c_jy_j:\left\|\sum\limits_{j=1}^n c_j\mathbf{v}_j\right\|_q=1\right\}.   
\end{equation}
Note that problem \eqref{MNI in lp} has a unique solution. By employing Propositions \ref{theorem: representer for MNI} and \ref{prop: solution of dual problem gives solution of original problem}, we represent the unique solution as follows. 

\begin{proposition}\label{representer-theorem-lp}
Let $p,q\in(1,+\infty)$ be such that $1/p+1/q=1$. Suppose that  $\mathbf{v}_{j}$, $j \in \mathbb{N}_{n}$, are linearly independent elements in $\ell_q(\mathbb{N})$ and $\mathbf{y}\in \mathbb{R}^{n}\backslash\{\mathbf{0}\}$. 
If 
$\hat{\mathbf{c}}:=[\hat{c}_j:j\in\mathbb{N}_n]\in\mathbb{R}^n$ is the solution of the dual problem \eqref{MNI in lp dual} and $\hat{\mathbf{v}}_p:=(\mathbf{y}^\top\hat{\mathbf{c}})\sum_{j\in\mathbb{N}_n} \hat{c}_j\mathbf{v}_j$, then the unique solution of problem \eqref{MNI in lp} with $\mathbf{y}$ has the form $\hat{\mathbf{x}}_p:=[\hat{x}_k:k\in\mathbb{N}]$ with 
$\hat{x}_k:=(\hat{\mathbf{v}}_p)_k\left|(\hat{\mathbf{v}}_p)_k\right|^{q-2}/\|\hat{\mathbf{v}}_p\|_q^{q-2}$, $k\in\mathbb{N}$.   
\end{proposition}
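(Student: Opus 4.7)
The plan is to specialize Proposition \ref{theorem: representer for MNI} together with Proposition \ref{prop: solution of dual problem gives solution of original problem} from Appendix B to the RKBS $\mathcal{B}:=\ell_p(\mathbb{N})$, whose pre-dual is $\mathcal{B}_*:=\ell_q(\mathbb{N})$ with $1/p+1/q=1$. First I would note that since $\ell_p(\mathbb{N})$ is strictly convex and reflexive and $\mathcal{M}_{\mathbf{y}}$ is a nonempty closed convex affine set, the MNI problem \eqref{MNI in lp} admits a \emph{unique} solution. Consequently $\rm{S}(\mathbf{y})$ is a singleton that coincides with its only extreme point, so it suffices to provide a closed-form expression for an arbitrary element of $\mathrm{ext}(\rm{S}(\mathbf{y}))$.

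Next I would invoke Proposition \ref{prop: solution of dual problem gives solution of original problem} of Appendix B: the dual problem of \eqref{MNI in lp} in the sense of that proposition is exactly \eqref{MNI in lp dual}, and its solution $\hat{\mathbf{c}}=[\hat c_j:j\in\mathbb{N}_n]$ produces an element $\hat{\mathbf{v}}_p\in\mathcal{V}$ satisfying the non-emptiness condition \eqref{Non-empty-set}. The scaling factor $\mathbf{y}^\top\hat{\mathbf{c}}$ in $\hat{\mathbf{v}}_p:=(\mathbf{y}^\top\hat{\mathbf{c}})\sum_{j\in\mathbb{N}_n}\hat c_j\mathbf{v}_j$ is chosen precisely so that $\|\hat{\mathbf{v}}_p\|_q$ equals the optimal primal value; indeed, the constraint in \eqref{MNI in lp dual} forces $\|\sum_j\hat c_j\mathbf{v}_j\|_q=1$, hence $\|\hat{\mathbf{v}}_p\|_q=\mathbf{y}^\top\hat{\mathbf{c}}$, which by strong duality agrees with $\inf\{\|\mathbf{x}\|_p:\mathbf{x}\in\mathcal{M}_{\mathbf{y}}\}$.

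Once $\hat{\mathbf{v}}_p$ is in hand, I would apply Proposition \ref{theorem: representer for MNI} to conclude that the unique element $\hat{\mathbf{x}}_p$ of $\rm{S}(\mathbf{y})$ belongs to $\|\hat{\mathbf{v}}_p\|_q\,\partial\|\cdot\|_q(\hat{\mathbf{v}}_p)$. Using the explicit singleton formula \eqref{subdifferential-lp} for the subdifferential of $\|\cdot\|_q$ in $\ell_q(\mathbb{N})$, the set $\partial\|\cdot\|_q(\hat{\mathbf{v}}_p)$ consists of the lone vector with $k$-th component $(\hat{\mathbf{v}}_p)_k|(\hat{\mathbf{v}}_p)_k|^{q-2}/\|\hat{\mathbf{v}}_p\|_q^{q-1}$. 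Multiplying through by $\|\hat{\mathbf{v}}_p\|_q$ then gives $\hat x_k=(\hat{\mathbf{v}}_p)_k|(\hat{\mathbf{v}}_p)_k|^{q-2}/\|\hat{\mathbf{v}}_p\|_q^{q-2}$ as required.

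The main technical obstacle is the careful invocation of Appendix B, since one must verify that the dual problem stated in \eqref{MNI in lp dual} is indeed the version produced by the abstract construction in Proposition \ref{prop: solution of dual problem gives solution of original problem}, and that the chosen scaling $(\mathbf{y}^\top\hat{\mathbf{c}})$ is exactly the one making $(\|\hat{\mathbf{v}}_p\|_q\partial\|\cdot\|_q(\hat{\mathbf{v}}_p))\cap\mathcal{M}_{\mathbf{y}}\neq\emptyset$ rather than merely placing $\hat{\mathbf{v}}_p$ in $\mathcal{V}$. Once that correspondence is settled, the remaining computation is a direct plug-in of the strictly convex $\ell_q$ subdifferential formula and requires no additional argument beyond the uniqueness already established from strict convexity of $\ell_p(\mathbb{N})$.
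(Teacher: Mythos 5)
Your proposal is correct and follows essentially the same route as the paper: obtain $\hat{\mathbf{v}}_p$ satisfying \eqref{Non-empty-set} via Proposition \ref{prop: solution of dual problem gives solution of original problem}, apply Proposition \ref{theorem: representer for MNI} (whose representation collapses to a single term because $\partial\|\cdot\|_q(\hat{\mathbf{v}}_p)$ is the singleton \eqref{subdifferential-lp}), and scale by $\|\hat{\mathbf{v}}_p\|_q$ to read off the components. The only cosmetic difference is that you justify uniqueness of the solution via strict convexity inside the proof, whereas the paper records it beforehand; the substance is identical.
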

\begin{proof}
    Since $\hat{\mathbf{c}}:=[\hat{c}_j:j\in\mathbb{N}_n]\in\mathbb{R}^n$ is the solution of the dual problem \eqref{MNI in lp dual}, Proposition \ref{prop: solution of dual problem gives solution of original problem} ensures that $\hat{\mathbf{v}}_p:=(\mathbf{y}^\top\hat{\mathbf{c}})\sum_{j\in\mathbb{N}_n} \hat{c}_j\mathbf{v}_j$ satisfies \eqref{Non-empty-set} with $\mathcal{B}_*=\ell_q(\mathbb{N})$. That is, the hypothesis of Proposition \ref{theorem: representer for MNI} is satisfied. By Proposition \ref{theorem: representer for MNI}, the unique solution $\hat{\mathbf{x}}_p:=[\hat{x}_k:k\in\mathbb{N}]$ of problem \eqref{MNI in lp} can be represented as 
    \begin{equation}\label{representation-theorem-formula-lp}
       \hat{\mathbf{x}}_p=\sum\limits_{j\in\mathbb{N}_{n}} \gamma_j \mathbf{u}_j, 
    \end{equation} 
    for some $\gamma_j\in\mathbb{R}$, $j\in\mathbb{N}_{n}$, with  $\sum_{j\in\mathbb{N}_{n}}\gamma_j=\|\hat{\mathbf{v}}_p\|_q$ and $\mathbf{u}_j\in\mathrm{ext}\left(\partial\|\cdot\|_q(\hat{\mathbf{v}}_p)\right)$, $j\in\mathbb{N}_{n}$. Noting that the set $\partial\|\cdot\|_q(\hat{\mathbf{v}}_p)$ is a singleton, representation \eqref{representation-theorem-formula-lp} reduces to \begin{equation}\label{representer_lp}
    \hat{\mathbf{x}}_p=\|\hat{\mathbf{v}}_p\|_q\mathbf{u},
    \end{equation}  
    with $\mathbf{u}:=[u_k:k\in\mathbb{N}]\in\partial\|\cdot\|_q(\hat{\mathbf{v}}_p)$. It follows from equation \eqref{subdifferential-lp} with $\mathbf{v}:=\hat{\mathbf{v}}_p$ that $u_k=(\hat{\mathbf{v}}_p)_k\left|(\hat{\mathbf{v}}_p)_k\right|^{q-2}/\|\hat{\mathbf{v}}_p\|_q^{q-1}$, $k\in\mathbb{N}$. Hence, we get for each $k\in\mathbb{N}$ that $\hat{x}_k=(\hat{\mathbf{v}}_p)_k\left|(\hat{\mathbf{v}}_p)_k\right|^{q-2}/\|\hat{\mathbf{v}}_p\|_q^{q-2}$, which completes the proof.
\end{proof}

Although there is only one term involved in it, representation \eqref{representer_lp} cannot lead to sparsity of the solution $\hat{\mathbf{x}}_p$, since $\mathbf{u}$ may not be sparse under the kernel representation. In what follows, we give a necessary condition on the elements $\mathbf{v}_j$, $j\in\mathbb{N}_n$, such that the solution $\hat{\mathbf{x}}_p$ has finite nonzero components. For each $N\in\mathbb{N}$, we define a truncation operator $\mathcal{T}_N:\ell_p(\mathbb{N})\to\ell_p(\mathbb{N})$ by $\mathcal{T}_N(\mathbf{x}):=[x_{N+k}: k\in\mathbb{N}]$ for all $\mathbf{x}=[x_k:k\in\mathbb{N}]\in\ell_p(\mathbb{N})$. For each $\mathbf{x}:=[x_j:j\in\mathbb{N}]$, the support of $\mathbf{x}$, denoted by
$\mathrm{supp}(\mathbf{x})$, is defined to be the index set on which $\mathbf{x}$ is nonzero, that is,
$\mathrm{supp}(\mathbf{x}):=\left\{j\in\mathbb{N}:x_j\neq0\right\}.$ 

\begin{proposition}\label{prop: solution finite support then linearly dependent}
Let $p,q\in(1,+\infty)$ be such that $1/p+1/q=1$. Suppose that $\mathbf{v}_{j}, j \in \mathbb{N}_{n}$, are linearly independent elements in $\ell_q(\mathbb{N})$, $\mathbf{y}\in \mathbb{R}^{n}\backslash\{\mathbf{0}\}$ and $\hat{\mathbf{x}}_p$ is the unique solution of the MNI problem \eqref{MNI in lp} with $\mathbf{y}$ in $\ell_p(\mathbb{N})$. If there exists  $N\in\mathbb{N}$ such that $\mathrm{supp}(\hat{\mathbf{x}}_p)\subset\mathbb{N}_{N}$, then  $\mathcal{T}_{N}(\mathbf{v}_j)$, $j\in\mathbb{N}_n$, are linearly dependent. 
\end{proposition}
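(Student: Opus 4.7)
The plan is to leverage Proposition \ref{representer-theorem-lp}, which gives an explicit formula for the unique solution $\hat{\mathbf{x}}_p$ in terms of the dual optimizer $\hat{\mathbf{c}}$, and then read off a linear dependence relation for the truncated functionals from the fact that the tail of $\hat{\mathbf{x}}_p$ vanishes.

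First I would observe that $\hat{\mathbf{v}}_p\neq 0$. Indeed, since $\mathbf{y}\neq \mathbf{0}$ and $\hat{\mathbf{x}}_p$ satisfies $\mathbf{V}\hat{\mathbf{x}}_p=\mathbf{y}$, we have $\hat{\mathbf{x}}_p\neq 0$; but from Proposition \ref{representer-theorem-lp}, $\hat{x}_k=(\hat{\mathbf{v}}_p)_k|(\hat{\mathbf{v}}_p)_k|^{q-2}/\|\hat{\mathbf{v}}_p\|_q^{q-2}$, so $\hat{\mathbf{v}}_p=0$ would force $\hat{\mathbf{x}}_p=0$. Consequently, in the representation $\hat{\mathbf{v}}_p=(\mathbf{y}^\top\hat{\mathbf{c}})\sum_{j\in\mathbb{N}_n}\hat{c}_j\mathbf{v}_j$, both factors must be nonzero; in particular $\hat{\mathbf{c}}\neq \mathbf{0}$.

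Next I would exploit the componentwise formula. The map $t\mapsto t|t|^{q-2}$ vanishes only at $t=0$, and the denominator $\|\hat{\mathbf{v}}_p\|_q^{q-2}$ is a positive constant. Hence, for each $k\in\mathbb{N}$,
\[
\hat{x}_k=0 \iff (\hat{\mathbf{v}}_p)_k=0.
\]
The hypothesis $\mathrm{supp}(\hat{\mathbf{x}}_p)\subset\mathbb{N}_N$ then gives $(\hat{\mathbf{v}}_p)_k=0$ for all $k>N$, i.e.\ $\mathcal{T}_N(\hat{\mathbf{v}}_p)=\mathbf{0}$ in $\ell_q(\mathbb{N})$.

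Finally, applying the linear operator $\mathcal{T}_N$ to the identity $\hat{\mathbf{v}}_p=(\mathbf{y}^\top\hat{\mathbf{c}})\sum_{j\in\mathbb{N}_n}\hat{c}_j\mathbf{v}_j$ and dividing by the nonzero scalar $\mathbf{y}^\top\hat{\mathbf{c}}$ yields
\[
\sum_{j\in\mathbb{N}_n}\hat{c}_j\,\mathcal{T}_N(\mathbf{v}_j)=\mathbf{0},
\]
with $\hat{\mathbf{c}}\neq \mathbf{0}$. This is precisely a nontrivial linear dependence among $\mathcal{T}_N(\mathbf{v}_j)$, $j\in\mathbb{N}_n$, completing the proof. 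The only mildly delicate point is confirming that $\hat{\mathbf{v}}_p$ cannot be the zero vector and that $\mathbf{y}^\top\hat{\mathbf{c}}\neq 0$; both follow cleanly from the fact that $\hat{\mathbf{x}}_p$ is nonzero and is given by the explicit duality-mapping expression in Proposition \ref{representer-theorem-lp}.
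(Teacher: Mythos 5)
Your proof is correct and follows essentially the same route as the paper: invoke Proposition \ref{representer-theorem-lp}, note that the vanishing of the tail of $\hat{\mathbf{x}}_p$ forces $\mathcal{T}_N(\hat{\mathbf{v}}_p)=\mathbf{0}$ (since $t\mapsto t|t|^{q-2}$ vanishes only at $t=0$), apply the linear map $\mathcal{T}_N$ to the definition of $\hat{\mathbf{v}}_p$, and divide by the nonzero scalar $\mathbf{y}^\top\hat{\mathbf{c}}$ to obtain the nontrivial dependence. The only cosmetic difference is how that scalar is shown to be nonzero: the paper argues via strong duality (Proposition \ref{prop: original problem=dual problem}, the common optimal value being positive because $\mathbf{y}\neq\mathbf{0}$), whereas you infer $\hat{\mathbf{v}}_p\neq\mathbf{0}$ from $\hat{\mathbf{x}}_p\neq\mathbf{0}$ through the explicit formula, which is acceptable, though the duality argument avoids having to interpret that formula in the degenerate case $\hat{\mathbf{v}}_p=\mathbf{0}$.
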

\begin{proof} 
Let $\hat{\mathbf{c}}:=[\hat{c}_j:j\in\mathbb{N}_n]\in\mathbb{R}^n$ be the solution of the dual problem \eqref{MNI in lp dual} and set $\hat{\mathbf{v}}_p:=(\mathbf{y}^\top\hat{\mathbf{c}})\sum_{j\in\mathbb{N}_n} \hat{c}_j\mathbf{v}_j$. 
Proposition \ref{representer-theorem-lp} guarantees that $\hat{\mathbf{x}}_p=[\hat{x}_k:k\in\mathbb{N}]$ with 
$\hat{x}_k:=(\hat{\mathbf{v}}_p)_k\left|(\hat{\mathbf{v}}_p)_k\right|^{q-2}/\|\hat{\mathbf{v}}_p\|_q^{q-2}$, $k\in\mathbb{N}$. 
This together with the assumption that $\mathrm{supp}(\hat{\mathbf{x}}_p)\subset\mathbb{N}_N$ leads to $\mathrm{supp}(\hat{\mathbf{v}}_p)\subset\mathbb{N}_N$. This implies that $\mathcal{T}_N(\hat{\mathbf{v}}_p)=0$. Substituting the definition of $\hat{\mathbf{v}}_p$ into this equation, we get that 
\begin{equation}\label{LinearComb}
    (\mathbf{y}^\top\hat{\mathbf{c}})\sum_{j\in\mathbb{N}_n} \hat{c}_j\mathcal{T}_N(\mathbf{v}_j)=0.
\end{equation}
Since $\mathbf{y}\neq\mathbf{0}$, the infimum of the MNI problem \eqref{MNI in lp} is nonzero. Then by Proposition \ref{prop: original problem=dual problem}, the quantity $\mathbf{y}^\top\hat{\mathbf{c}}$, as the supremum of the dual problem \eqref{MNI in lp dual}, is also nonzero. As a result, we have that $\hat{\mathbf{c}}\neq\mathbf{0}.$
Hence, it follows from \eqref{LinearComb} that $\sum_{j\in\mathbb{N}_n} \hat{c}_j\mathcal{T}_N(\mathbf{v}_j)=0$ for $\hat{\mathbf{c}}\neq\mathbf{0}.$
This ensures that $\mathcal{T}_{N}(\mathbf{v}_j)$, $j\in\mathbb{N}_n$, are linearly dependent. 
\end{proof}%

Proposition \ref{prop: solution finite support then linearly dependent} indicates that in general  the solution  $\hat{\mathbf{x}}_p$ of the MNI problem in the sequence spaces $\ell_p(\mathbb{N})$, for
$1 < p < +\infty$, will not have a finite number of nonzero components, unless strict conditions are imposed to the elements $\mathbf{v}_j$, $j\in\mathbb{N}_n$. This has been demonstrated in an example presented in \cite{cheng2021minimum}, where the solution of the MNI problem in $\ell_2(\mathbb{N})$ has infinite nonzero components, that is, the solution can only be expressed by infinitely many kernel sessions. Indeed, for the elements $\mathbf{v}_1$ and $\mathbf{v}_2$ chosen in the example, $\mathcal{T}_{N}(\mathbf{v}_1)$ and $\mathcal{T}_{N}(\mathbf{v}_2)$ will never be linearly dependent for any choice of $N\in\mathbb{N}$.

\section{Sparse Learning in the Measure Space} 
We study in this section
the MNI problem and the regularization problem in an RKBS constructed by the measure space. This RKBS is proved to have the space of continuous functions as both the adjoint RKBS and the pre-dual space. By verifying Assumptions (A1) and (A2) for the RKBS, we specialize Theorems \ref{theorem: sparser representer for MNI} and \ref{theorem: sparser representer theorem regularization} to this space and obtain the sparse representer theorems for the solutions of the MNI problem and the regularization problem in this space. 

We begin with introducing an RKBS, which has been considered in \cite{bartolucci2023understanding, lin2022reproducing, lin2022on, spek2023duality}. Let $X$ be a prescribed set and $X'$ be a locally compact Hausdorff space. Denote by $C_0(X')$ the space of all continuous functions $f:X'\to\mathbb{R}$ such that for any $\epsilon>0$, the set $\{x'\in X':|f(x')|\geq\epsilon\}$ is compact. We equip the maximum norm on $C_0(X')$, namely $\|f\|_{\infty}:=\sup_{x'\in X'}|f(x')|$ for all $f\in C_0(X')$. The Riesz-Markov representation theorem (\cite{conway2019course}) states that the dual space of $C_0(X')$ is isometrically isomorphic to the space $\mathfrak{M}(X')$ of real-valued regular Borel measures on $X'$ endowed with the total variation norm $\|\cdot\|_{\mathrm{TV}}$. Suppose that $K:X\times X'\to\mathbb{R}$ satisfies $K(x,\cdot)\in C_0(X')$ for all $x\in X$ and the density condition \begin{equation}\label{density_condition}
\overline{\mathrm{span}}\{K(x,\cdot): x\in X\}=C_0(X').
\end{equation}
Associated with the function $K$, we introduce a space of functions on $X$ by
\begin{equation}\label{RKBS B measure M(X)}
    \mathcal{B}_K:=\left\{f_{\mu}:=\int_{X'} K(\cdot, x')d\mu(x'):\mu\in \mathfrak{M}(X')\right\},
\end{equation}
equipped with 
\begin{equation}\label{norm RKBS B measure M(X)}
    \|f_{\mu}\|_{\mathcal{B}_K}:=\|\mu\|_{\mathrm{TV}}.
\end{equation}

In passing, we provide an example of a kernel $K$ that satisfies the density condition \eqref{density_condition}. We choose $X=X':=\mathbb{R}^d$ and consider the Gaussian kernel defined by  
\begin{equation}\label{Gausskernel}
K(x,y):=e^{-\frac{\|x-y\|^2}{2\sigma^2}}, \ \ x,y\in \mathbb{R}^d,
\end{equation}
with $\sigma>0$. To show that the Gaussian kernel $K$ satisfies the density condition \eqref{density_condition}, we establish that for $\nu\in\mathfrak{M}(\mathbb{R}^d)$ satisfying 
\begin{equation}\label{k mu = 0}
    \int_{\mathbb{R}^d} K(x,y)d\nu(y) =0, \quad\text{for all }x\in \mathbb{R}^d,
\end{equation}
we must have that $\nu=0$. For this purpose, we re-express  the Gaussian kernel $K$ as 
\begin{equation}\label{rewrite Gaussian kernel}
K(x,y)=\int_{\mathbb{R}^d}p(t)e^{i\langle t, x-y\rangle}dt,\quad x,y\in \mathbb{R}^d,
    \end{equation}
where 
\begin{equation*}\label{def pv}
    p(t):=\left(\frac{\sigma}{\sqrt{2\pi}}\right)^d e^{-\frac{\sigma^2\|t\|^2}{2}},\quad t\in \mathbb{R}^d.
\end{equation*}
Substituting representation \eqref{rewrite Gaussian kernel} into equation \eqref{k mu = 0} yields
\begin{equation*}
    \int_{\mathbb{R}^d} \int_{\mathbb{R}^d}p(t)e^{i\langle t, x-y\rangle}dt d\nu(y) =0,\ \ x\in \mathbb{R}^d,
\end{equation*}
which further leads to 
\begin{equation*}
   \int_{\mathbb{R}^d}\int_{\mathbb{R}^d} \int_{\mathbb{R}^d}p(t)e^{i\langle t, x-y\rangle}dtd\nu(y)d\nu(x) =0.
\end{equation*}
By the Fubini theorem, we get that 
$$
\int_{\mathbb{R}^d}p(t)\left|\hat \nu(t)\right|^2 dt =0, \ \ \mbox{where}\ \ \hat \nu(t):= \int_{\mathbb{R}^d}e^{i\langle t, x\rangle}d\nu(x), \ \ t\in \mathbb{R}^d.
$$ 
Since $p(t)>0$ for all $t\in \mathbb{R}^d$, the above equation yields that 
$\hat\nu(t)=0$ for all $t\in \mathbb{R}^d$ and hence $\nu=0$.

We now return to the investigation of the space $\mathcal{B}_K$ defined by \eqref{RKBS B measure M(X)} and \eqref{norm RKBS B measure M(X)}. We first show that it is an RKBS on $X$ which has $C_0(X')$ as a pre-dual space.

\begin{proposition}\label{prop: B M(X) is an RKBS}
     Let $X$ be a prescribed set and $X'$ be a locally compact Hausdorff space. If the function $K:X\times X'\to\mathbb{R}$ satisfies $K(x,\cdot)\in C_0(X')$ for any $x\in X$ and the density condition \eqref{density_condition}, then the space $\mathcal{B}_K$ defined by \eqref{RKBS B measure M(X)} endowed with the norm \eqref{norm RKBS B measure M(X)} is an RKBS on $X$ and $C_0(X')$ is a pre-dual space of $\mathcal{B}_K$. 
\end{proposition}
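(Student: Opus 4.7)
The plan is to construct an explicit isometric isomorphism between $\mathfrak{M}(X')$ and $\mathcal{B}_K$ that will transfer the Banach space structure of $\mathfrak{M}(X') = C_0(X')^*$ onto $\mathcal{B}_K$, and then verify the two remaining conditions in Definition \ref{RKBS}. Specifically, I would introduce the linear map $T:\mathfrak{M}(X') \to \mathcal{B}_K$ defined by $T(\mu) := f_\mu$. Surjectivity of $T$ is built into the very definition \eqref{RKBS B measure M(X)} of $\mathcal{B}_K$, so the real work lies in showing $T$ is injective, from which the definition \eqref{norm RKBS B measure M(X)} becomes unambiguous and $T$ becomes isometric by construction.

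For injectivity, suppose $T(\mu) = 0$, i.e., $\int_{X'} K(x,x')\,d\mu(x') = 0$ for every $x \in X$. By linearity of the integral, this extends to $\int_{X'} g\,d\mu = 0$ for every $g \in \mathrm{span}\{K(x,\cdot):x\in X\}$. Since $\mu \in \mathfrak{M}(X')$ acts as a bounded linear functional on $C_0(X')$ via integration (this is the Riesz--Markov identification), the functional $g \mapsto \int g\,d\mu$ is continuous on $C_0(X')$, so the density hypothesis \eqref{density_condition} upgrades the above to $\int_{X'} g\,d\mu = 0$ for all $g \in C_0(X')$. By Riesz--Markov this forces $\mu = 0$. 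Consequently $T$ is a linear bijection, the norm \eqref{norm RKBS B measure M(X)} is well-defined, and $T$ is an isometric isomorphism between $C_0(X')^* = \mathfrak{M}(X')$ and $\mathcal{B}_K$. This immediately gives that $\mathcal{B}_K$ is a Banach space and that $C_0(X')$ is a pre-dual of $\mathcal{B}_K$.

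It remains to confirm $\mathcal{B}_K$ is a space of functions on $X$ and that each point evaluation $\delta_x$ is continuous. For the former, if $\|f_\mu\|_{\mathcal{B}_K} = \|\mu\|_{\mathrm{TV}} = 0$, then $\mu = 0$ and thus $f_\mu(x) = \int K(x,x')\,d\mu(x') = 0$ for every $x \in X$. For the latter, for any $x \in X$ and any $f_\mu \in \mathcal{B}_K$, the estimate
\[
|\delta_x(f_\mu)| = \left|\int_{X'} K(x,x')\,d\mu(x')\right| \le \|K(x,\cdot)\|_\infty \, \|\mu\|_{\mathrm{TV}} = \|K(x,\cdot)\|_\infty\,\|f_\mu\|_{\mathcal{B}_K}
\]
provides the required constant $C_x := \|K(x,\cdot)\|_\infty$, which is finite by the assumption $K(x,\cdot) \in C_0(X')$. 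With both conditions of Definition \ref{RKBS} verified, $\mathcal{B}_K$ is an RKBS on $X$.

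The main obstacle is establishing injectivity of $T$, since this is precisely where the density hypothesis \eqref{density_condition} is essential; without it, distinct measures could produce the same function on $X$ and the norm \eqref{norm RKBS B measure M(X)} would be ill-defined. Everything else is a direct consequence of the Riesz--Markov identification $C_0(X')^* \cong \mathfrak{M}(X')$ and elementary estimates.
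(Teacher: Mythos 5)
Your proof is correct and follows essentially the same route as the paper: the isometric isomorphism $\mu\mapsto f_\mu$ from $\mathfrak{M}(X')$ onto $\mathcal{B}_K$, the estimate $|\delta_x(f_\mu)|\le\|K(x,\cdot)\|_\infty\|f_\mu\|_{\mathcal{B}_K}$ for continuity of point evaluations, and the Riesz--Markov identification $(C_0(X'))^*=\mathfrak{M}(X')\cong\mathcal{B}_K$ for the pre-dual claim. The only difference is that you spell out the injectivity argument (density condition plus Riesz--Markov) that the paper leaves implicit, which is a welcome addition rather than a deviation.
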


\begin{proof}
We first establish that $\mathcal{B}_K$ is an RKBS of functions on $X$.  It follows from the density condition \eqref{density_condition} that the mapping 
     $\Phi:\mathfrak{M}(X')\to\mathcal{B}_K$, defined by 
     \begin{equation}\label{isometric isomorphism from M(X') to BK}\Phi(\mu):=\int_{X'}K(\cdot,x')d\mu(x'),\ \ \mbox{for all}\ \ \mu\in\mathfrak{M}(X'), 
     \end{equation}
     is an isometric isomorphism from $\mathfrak{M}(X')$ to $\mathcal{B}_K$. Thus, $\mathcal{B}_K$ is a Banach space of functions on $X$. By Definition \ref{RKBS}, 
it suffices to verify that $\delta_{x}$, $x\in X$, are all continuous on $\mathcal{B}_K$. For all $x\in X$ and all $f\in\mathcal{B}_K$, by \eqref{RKBS B measure M(X)}, we have that
$$
f(x)=\int_{X'} K(x, x')d\mu(x'), \ \  \mbox{for some}\ \ \mu\in \mathfrak{M}(X').
$$
It follows from the definition of the total variation norm and \eqref{norm RKBS B measure M(X)} that
\begin{align*}
|\delta_{x}(f)|&=\left|\int_{X'} K(x, x')d\mu(x')\right|\\
&\leq\|K(x,\cdot)\|_{\infty}\|\mu\|_{\mathrm{TV}}\\
&=\|K(x,\cdot)\|_{\infty}\|f\|_{\mathcal{B}_K}.
\end{align*}
That is, $\delta_{x}$ is continuous on $\mathcal{B}_K$. %

It remains to show that
\begin{equation}\label{predual-of-B_K}    (C_0(X'))^*=\mathcal{B}_K.
\end{equation}
It is known from the Riesz-Markov representation theorem (\cite{conway2019course}) that
\begin{equation}\label{predual-of-B_K-proof}    (C_0(X'))^*=\mathfrak{M}(X').
\end{equation}
Moreover, the mapping $\Phi:\mathfrak{M}(X')\rightarrow\mathcal{B}_K$ defined by \eqref{isometric isomorphism from M(X') to BK} 
is an isometric isomorphism. Thus, the measure space $\mathfrak{M}(X')$ is isometrically isomorphic to $\mathcal{B}_K$. This together with \eqref{predual-of-B_K-proof} leads to  \eqref{predual-of-B_K}. 
\end{proof}

We next reveal that the $\delta$-dual space $\mathcal{B}'_K$ of $\mathcal{B}_K$ is isometrically isomorphic to $C_0(X')$, a Banach space of functions, and the function $K$ coincides with the reproducing kernel of $\mathcal{B}_K$. 

\begin{proposition}\label{prop: measure space delta dual ii to C0}
Let $X$ be a prescribed set and $X'$ be a locally compact Hausdorff space. Suppose that the function $K:X\times X'\to\mathbb{R}$ satisfies $K(x,\cdot)\in C_0(X')$ for any $x\in X$ and the density condition \eqref{density_condition}. If $\mathcal{B}_K$ is an RKBS on $X$ defined by \eqref{RKBS B measure M(X)} endowed with the norm \eqref{norm RKBS B measure M(X)}, then the $\delta$-dual space $\mathcal{B}'_K$ of $\mathcal{B}_K$ is isometrically isomorphic to $C_0(X')$ and the function $K$ is the reproducing kernel of $\mathcal{B}_K$.
\end{proposition}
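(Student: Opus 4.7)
The plan is to identify each point evaluation functional $\delta_x$ on $\mathcal{B}_K$ with the function $K(x,\cdot)\in C_0(X')$ viewed as an element of $\mathcal{B}_K^*$ under the canonical embedding, and then use the density condition \eqref{density_condition} together with Proposition \ref{prop: In RKBS, delta x in dense in predual} to conclude $\mathcal{B}'_K=C_0(X')$. The reproducing property then falls out immediately from the integral representation \eqref{RKBS B measure M(X)}, and uniqueness is handed to us by Proposition \ref{reproducing kernels for RKBS}.

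First I would carry out the identification. By Proposition \ref{prop: B M(X) is an RKBS}, $C_0(X')$ is a pre-dual of $\mathcal{B}_K$, so by \eqref{natural-map-predual} every $g\in C_0(X')$ acts on $\mathcal{B}_K$ by $\langle g, f_\mu\rangle_{\mathcal{B}_K}=\langle f_\mu, g\rangle_{C_0(X')}$. The isometric isomorphism $\Phi:\mathfrak{M}(X')\to\mathcal{B}_K$ from \eqref{isometric isomorphism from M(X') to BK} tells us that $f_\mu$, regarded as a linear functional on $C_0(X')$, is precisely $g\mapsto\int_{X'}g(x')\,d\mu(x')$. Specializing to $g:=K(x,\cdot)\in C_0(X')$ yields
\begin{equation}\label{eq: K(x,.) equals delta_x}
\langle K(x,\cdot),f_\mu\rangle_{\mathcal{B}_K}
=\int_{X'}K(x,x')\,d\mu(x')=f_\mu(x)=\delta_x(f_\mu),
\end{equation}
so $\delta_x$ and $K(x,\cdot)$ coincide as elements of $\mathcal{B}_K^*$. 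In particular $\Delta=\mathrm{span}\{K(x,\cdot):x\in X\}\subseteq C_0(X')$.

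Next I would close $\Delta$ in the norm of $\mathcal{B}_K^*$. Since $C_0(X')$ is a pre-dual of $\mathcal{B}_K$ and $\Delta\subseteq C_0(X')$, Proposition \ref{prop: In RKBS, delta x in dense in predual} gives $\overline{\Delta}=C_0(X')$, where the closure is taken in the $\|\cdot\|_\infty$ norm. Alternatively, the density condition \eqref{density_condition} says exactly that $\Delta$ is $\|\cdot\|_\infty$-dense in $C_0(X')$, giving the same conclusion. Thus $\mathcal{B}'_K=\overline{\Delta}=C_0(X')$, which is a Banach space of functions on $X'$.

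Finally I would verify that $K$ is the reproducing kernel. We already have $K(x,\cdot)\in C_0(X')=\mathcal{B}'_K$ for all $x\in X$, and \eqref{eq: K(x,.) equals delta_x} is exactly the reproducing property \eqref{def: reproducing property} for every $f=f_\mu\in\mathcal{B}_K$. Uniqueness of the kernel satisfying these two properties is then immediate from Proposition \ref{reproducing kernels for RKBS}. I do not anticipate a serious obstacle: all the work has been front-loaded into the setup of $\mathcal{B}_K$ as an RKBS with pre-dual $C_0(X')$ and into the density condition \eqref{density_condition}; once those are in hand, the identification of $\delta_x$ with $K(x,\cdot)$ via the isometric isomorphism $\Phi$ is the only substantive step, and it is a one-line computation.
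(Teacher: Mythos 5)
Your proposal is correct, but it takes a different route from the paper. The paper proves the isometric isomorphism by hand: it defines $\Psi:\Delta\to K_X$, $\Psi\bigl(\sum_j\alpha_j\delta_{x_j}\bigr):=\sum_j\alpha_jK(x_j,\cdot)$, shows $\|\ell\|_{\mathcal{B}_K^*}\leq\|\Psi(\ell)\|_\infty$ via the total variation bound, and gets the reverse inequality by picking $z\in X'$ where $|\Psi(\ell)|$ attains its supremum (using $\Psi(\ell)\in C_0(X')$) and testing against $f_{\delta_z}=K(\cdot,z)$; the density condition \eqref{density_condition} then identifies $\overline{\Delta}$ with $C_0(X')$, and the reproducing property is checked through the bilinear form \eqref{bilinear form} and $\Psi^{-1}(K(x,\cdot))=\delta_x$. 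You instead reuse Proposition \ref{prop: B M(X) is an RKBS}: since $(C_0(X'))^*=\mathcal{B}_K$ with the pairing $\langle f_\mu,g\rangle_{C_0(X')}=\int_{X'}g\,d\mu$, the canonical isometric embedding of the pre-dual into $\mathcal{B}_K^*$ together with \eqref{natural-map-predual} reduces everything to the one-line identity $\langle K(x,\cdot),f_\mu\rangle_{\mathcal{B}_K}=\int_{X'}K(x,x')\,d\mu(x')=f_\mu(x)$, i.e.\ $\delta_x$ is the image of $K(x,\cdot)$; then $\Delta\subseteq C_0(X')$ and either Proposition \ref{prop: In RKBS, delta x in dense in predual} or the density condition gives $\mathcal{B}_K'=\overline{\Delta}\cong C_0(X')$ isometrically, with the reproducing property and uniqueness falling out of the same identity and Proposition \ref{reproducing kernels for RKBS}. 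Your argument is shorter because the norm computation the paper does on $\Delta$ is absorbed into the standard fact that a pre-dual embeds isometrically into the dual (and into the Riesz--Markov identification already invoked in Proposition \ref{prop: B M(X) is an RKBS}); the paper's version is more self-contained and makes the isometry $\Psi$ and the induced bilinear form explicit, which it then reuses when verifying the reproducing property. Both are valid; just make sure you state explicitly that the image of $C_0(X')$ in $\mathcal{B}_K^*$ is closed (it is complete, being isometric to a Banach space), so that the closure of $\Delta$ does not leak outside it.
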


\begin{proof}
    We first prove that $\mathcal{B}'_K$ is isometrically isomorphic to $C_0(X')$. According to the definition of $\mathcal{B}_K'$ and the density condition \eqref{density_condition}, it suffices to verify that $\Delta:=\mathrm{span}\{\delta_x:x\in X\}$ is isometrically isomorphic to the linear span $K_X:=\mathrm{span}\{K(x,\cdot):x\in X\}$.
    We introduce a mapping $\Psi:\Delta\to K_X$ by 
   \begin{equation}\label{isometric isomorphism_delta_KX}   \Psi\left(\sum_{j\in\mathbb{N}_m}\alpha_j\delta_{x_j}\right) := \sum_{j\in\mathbb{N}_m} \alpha_j K(x_j,\cdot),\ \mbox{ for all}\ m\in\mathbb{N}, \alpha_j\in\mathbb{R}, x_j\in X, j\in\mathbb{N}_m,
   \end{equation}
   and proceed to prove that $\Psi$ is an isometric isomorphism between $\Delta$ and $K_{X}$. Clearly, $\Psi$ is linear and surjective. It remains to prove that $\Psi$ is isometric, that is, $\left\|\ell\right\|_{\mathcal{B}_K^*}=\left\|\Psi(\ell)\right\|_{\infty}$, for all $\ell\in\Delta$. 
    For any $\ell:=\sum_{j\in\mathbb{N}_m} \alpha_j\delta_{x_j}\in\Delta$ with $m\in\mathbb{N}$, $\alpha_j\in\mathbb{R}$, $x_j\in X$, $j\in\mathbb{N}_m$ and any $f_{\mu}:=\int_{X'} K(\cdot,x')d\mu(x')\in\mathcal{B}_K$ with $\mu\in\mathfrak{M}(X')$, it holds that 
    $$
    \ell(f_{\mu})=\int_{X'}\left( \sum_{j\in\mathbb{N}_m} \alpha_jK(x_j,x')\right)d\mu(x'),
    $$ 
    which together with definition \eqref{isometric isomorphism_delta_KX} further leads to \begin{equation}\label{l-f-mu}
    \ell(f_{\mu})=\int_{X'}(\Psi(\ell))(x')d\mu(x').
    \end{equation} 
    It follows from the definition of the total variation norm that $|\ell(f_{\mu})|\leq\|\Psi(\ell)\|_{\infty}\|\mu\|_{\mathrm{TV}}.
    $
    By definition \eqref{norm RKBS B measure M(X)}, we rewrite the above inequality as $|\ell(f_{\mu})|\leq\|\Psi(\ell)\|_{\infty}\|f_\mu\|_{\mathcal{B}_K}$ for all $f_\mu\in\mathcal{B}_K$ which implies that $\|\ell\|_{\mathcal{B}_K^*}\leq\|\Psi(\ell)\|_\infty$. To prove $\|\ell\|_{\mathcal{B}_K^*}=\|\Psi(\ell)\|_\infty$, it suffices to identify a specific function $f_{\mu}$ in $\mathcal{B}_K$ such that $|\ell(f_{\mu})|=\|\Psi(\ell)\|_\infty\|f_{\mu}\|_{\mathcal{B}_K}$. Since $\Psi(\ell)\in C_0(X')$, there exists $z\in X'$ at which the function $\Psi(\ell)$ attains its norm, that is, $|\Psi(\ell)(z)|=\|\Psi(\ell)\|_\infty$. By noting that  $\delta_z\in\mathfrak{M}(X')$ satisfies $\|\delta_z\|_{\mathrm{TV}}=1$, we obtain that  $f_{\delta_z}\in\mathcal{B}_K$ and  $\|f_{\delta_z}\|_{\mathcal{B}_K}=1$. It follows from equation \eqref{l-f-mu} that 
    $$
    \ell(f_{\delta_z})=\int_{X'}(\Psi(\ell))(x') d\delta_z(x') =\Psi(\ell)(z).
    $$
    Noting that $|\Psi(\ell)(z)|=\|\Psi(\ell)\|_\infty$ and $\|f_{\delta_z}\|_{\mathcal{B}_K}=1$, we get from above equation that  $|\ell(f_{\delta_z})|=\|\Psi(\ell)\|_{\infty}\|f_{\delta_z}\|_{\mathcal{B}_K}$. Consequently, we conclude that $\|\ell\|_{\mathcal{B}_K^*}=\|\Psi(\ell)\|_\infty$ for all $\ell\in\Delta$ and hence, $\Psi$ is an isometric isomorphism between $\Delta$ and $K_{X}$.  

    We next identify the reproducing kernel of $\mathcal{B}_K$. Since the $\delta$-dual space $\mathcal{B}'_K$ is isometrically isomorphic to a Banach space of functions on $X'$, Proposition \ref{reproducing kernels for RKBS} ensures that there exists a unique reproducing kernel. To prove that $K$ is the reproducing kernel, we need to verify the reproducing property. According to the isometric isomorphism $\Psi$ defined by \eqref{isometric isomorphism_delta_KX}, 
    the bilinear form on  $C_0(X')\times\mathcal{B}_K$ can be defined by \begin{equation}\label{bilinear form}
    \left\langle f,f_\mu\right\rangle_{C_0(X')\times \mathcal{B}_K}:=\left\langle \Psi^{-1}(f),f_\mu\right\rangle_{\mathcal{B}_K},
    \end{equation}
    for all $f\in C_0(X')$ and $f_\mu\in\mathcal{B}_K$. By equation \eqref{bilinear form} with noting that $\Psi^{-1}( K(x,\cdot))=\delta_x$ for all $x\in X$, we obtain for any $x\in X$ and any $f_\mu\in\mathcal{B}_K$ that
\begin{equation*}
    f_{\mu}(x)=\left\langle\delta_x, f_\mu\right\rangle_{\mathcal{B}_K}=\left\langle K(x,\cdot), f_\mu\right\rangle_{C_0(X')\times\mathcal{B}_K}.
\end{equation*}   
 That is, the reproducing property holds. Thus, we get the conclusion that $K$ is the reproducing kernel of $\mathcal{B}_K$. 
\end{proof}

In the next result, we show that $C_0(X')$ is the adjoint RKBS of $\mathcal{B}_K$. 

\begin{proposition}\label{prop: adjoint RKBS of measure RKBS}
Let $X$ be a prescribed set and $X'$ be a locally compact Hausdorff space. Suppose that the function $K:X\times X'\to\mathbb{R}$ satisfies $K(x,\cdot)\in C_0(X')$ for any $x\in X$ and the density condition \eqref{density_condition}. If $\mathcal{B}_K$ is an RKBS on $X$ defined by \eqref{RKBS B measure M(X)} endowed with the norm \eqref{norm RKBS B measure M(X)}, then $C_0(X')$ is the adjoint RKBS of $\mathcal{B}_K$.
\end{proposition}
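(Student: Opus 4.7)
The plan is to verify the three defining conditions of an adjoint RKBS spelled out around \eqref{reproducing property in B'}: (i) $C_0(X')$ is itself an RKBS on $X'$; (ii) $K(\cdot,x')\in\mathcal{B}_K$ for every $x'\in X'$; and (iii) the reproducing identity $g(x')=\langle g,K(\cdot,x')\rangle_{\mathcal{B}_K}$ holds for all $g\in C_0(X')$ and all $x'\in X'$.

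Conditions (i) and (ii) require only routine checks. For (i), Definition \ref{RKBS} demands continuity of each evaluation functional on $C_0(X')$; this is immediate since $|\delta_{x'}(f)|=|f(x')|\leq\|f\|_\infty$ for every $f\in C_0(X')$ and every $x'\in X'$. For (ii), the Dirac mass $\delta_{x'}\in\mathfrak{M}(X')$ produces, via \eqref{RKBS B measure M(X)}, the function $f_{\delta_{x'}}(x)=\int_{X'}K(x,t)\,d\delta_{x'}(t)=K(x,x')$, so that $K(\cdot,x')=f_{\delta_{x'}}\in\mathcal{B}_K$.

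The substantive step is condition (iii). First I would upgrade equation \eqref{l-f-mu}, which gives $\ell(f_\mu)=\int_{X'}\Psi(\ell)\,d\mu$ for $\ell\in\Delta$, from the dense subspace $K_X=\Psi(\Delta)$ to all of $C_0(X')$. Since $K_X$ is dense in $C_0(X')$ by \eqref{density_condition} and the bilinear form \eqref{bilinear form} is continuous in its first argument with respect to $\|\cdot\|_\infty$ and in its second argument with respect to the total variation norm, approximation yields
\[
\langle g,f_\mu\rangle_{C_0(X')\times\mathcal{B}_K}=\int_{X'}g(t)\,d\mu(t),\ \mbox{for all}\ g\in C_0(X'),\ \mu\in\mathfrak{M}(X').
\]
Taking $\mu:=\delta_{x'}$ and using (ii) to identify $f_{\delta_{x'}}=K(\cdot,x')$ gives $\langle g,K(\cdot,x')\rangle_{\mathcal{B}_K}=\int_{X'}g\,d\delta_{x'}=g(x')$, which is exactly \eqref{reproducing property in B'}. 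The only mildly delicate point in the whole argument is this density-plus-continuity extension of the pairing formula from $K_X$ to all of $C_0(X')$; once it is in place, the reproducing property reduces to a one-line evaluation of an integral against a Dirac mass, and the three conditions together yield that $C_0(X')$ is the adjoint RKBS of $\mathcal{B}_K$.
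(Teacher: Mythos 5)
Your proposal is correct and follows essentially the same route as the paper: both proofs check the three defining conditions, with the continuity of point evaluations on $C_0(X')$ and the inclusion $K(\cdot,x')=f_{\delta_{x'}}\in\mathcal{B}_K$ handled identically, and the reproducing property \eqref{reproducing property in B'} obtained by identifying the pairing of $C_0(X')$ with $\mathcal{B}_K$ with integration against the representing measure and then evaluating at the Dirac mass $\delta_{x'}$. The only difference is cosmetic: you justify the pairing identity by an explicit density-and-continuity extension of \eqref{l-f-mu} from $K_X$ to all of $C_0(X')$, whereas the paper gets the same identity directly by transporting the Riesz--Markov pairing through the isometric isomorphism $\Phi$ with $\Phi(\delta_{x'})=K(\cdot,x')$, so your version merely makes explicit a step the paper leaves implicit.
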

\begin{proof}
    We first establish that $C_0(X')$ is an RKBS. It is clear that $C_0(X')$ is a space of functions. For any $g\in C_0(X')$ and $x'\in X'$, there holds $|\delta_{x'}(g)|=|g(x')|\leq\|g\|_\infty$ which implies that the point evaluation functionals are continuous and hence $C_0(X')$ is an RKBS. We then show that $K(\cdot,x')\in\mathcal{B}_K$ for all $x\in X'$. It follows for each $x\in X$, $x'\in X'$ that \begin{equation}\label{delta-phi-K}
    K(x,x')=\int_{X'} K(x, y')d\delta_{x'}(y'),
    \end{equation} 
    which together with  definition \eqref{RKBS B measure M(X)} of $\mathcal{B}_K$ yields that $K(\cdot,x')\in\mathcal{B}_K$. It remains to verify the reproducing property \eqref{reproducing property in B'} in $C_0(X')$. As pointed out in the proof of Proposition \ref{prop: B M(X) is an RKBS}, the mapping $\Phi:\mathfrak{M}(X')\to\mathcal{B}_K$ defined by \eqref{isometric isomorphism from M(X') to BK} is an isometric isomorphism from $\mathfrak{M}(X')$ to $\mathcal{B}_K$. Moreover, it follows from equation \eqref{delta-phi-K} that $\Phi(\delta_{x'})=K(\cdot,x')$ for all $x'\in X'$.  Then we have for any $g\in C_0(X')$ and $x'\in X'$ that 
    $$
    g(x')=\langle g, \delta_{x'}\rangle_{\mathfrak{M}(X')}=\langle g,K(\cdot,x')\rangle_{\mathcal{B}_K},
    $$ 
    which proves \eqref{reproducing property in B'} with $\mathcal{B}':=C_0(X')$. Consequently, the desired result follows from the definition of the adjoint RKBS.
\end{proof}

 We now turn to describing the MNI problem and the regularization problem in $\mathcal{B}_K$. In this section, we consider learning a target function in $\mathcal{B}_K$ from the point-evaluation functional data. Specifically, suppose that $x_j\in X$, $j\in\mathbb{N}_n$, are $n$ distinct points in $X$ and $K(x_j,\cdot)$, $j\in\mathbb{N}_n$, are linearly independent elements in $C_0(X')$. Associated with these point-evaluation functionals, we set 
\begin{equation}\label{V_span_kernel}
\mathcal{V}:=\mathrm{span}\left\{K(x_j,\cdot):j\in\mathbb{N}_n\right\}.
\end{equation}
The operator $\mathcal{L}:{\mathcal{B}_K} \rightarrow \mathbb{R}^{n}$, defined by  \eqref{operator L}, is specialized as  
\begin{equation}\label{L YES DC}
\mathcal{L}(f_\mu):=\left[f_\mu(x_j): j \in \mathbb{N}_{n}\right],\ \mbox{for all}\ f_\mu\in\mathcal{B}_K.
\end{equation}
For a given vector $\mathbf{y}\in\mathbb{R}^n$, the subset $\mathcal{M}_{\mathbf{y}}$ of $\mathcal{B}_K$, defined by \eqref{hyperplane My}, has the form
\begin{equation}\label{hyperplane YES DC}
    \mathcal{M}_{\mathbf{y}}:=\{f_\mu \in {\mathcal{B}_K}: \mathcal{L}(f_\mu)=\mathbf{y}\}.
\end{equation}
With the notation above, we formulate the MNI problem in ${\mathcal{B}_K}$ as 
\begin{equation}\label{MNI in RKBS B measure M(X)}
    \inf \left\{\left\|f_\mu\right\|_{{\mathcal{B}_K}}: f_\mu \in \mathcal{M}_{\mathbf{y}}\right\},
\end{equation}
and the regularization problem in ${\mathcal{B}_K}$ as
\begin{equation}\label{eq: regularization problem RKBS B measure M(X)}
    \inf \left\{\mathcal{Q}_{\mathbf{y}}(\mathcal{L}(f_\mu))+\lambda\| f_\mu\|_{{\mathcal{B}_K}}:  f_\mu\in {\mathcal{B}_K}\right\},
\end{equation}
where $\mathcal{Q}_{\mathbf{y}}: \mathbb{R}^{n} \rightarrow \mathbb{R}_{+}$ is a loss function and $\lambda$ is a positive regularization parameter.

Before establishing the sparse representer theorems for the solutions of problems \eqref{MNI in RKBS B measure M(X)} and \eqref{eq: regularization problem RKBS B measure M(X)}, we verify Assumptions (A1) and (A2) for the RKBS $\mathcal{B}_K$. To this end, we  characterize the extreme points of the subdifferential set of the maximum norm at any $g\in C_0(X')$. We start from the result concerning the extreme points of the unit ball in $\mathcal{B}_K$. 
\begin{lemma}\label{lemma: extreme points of RKBS B}
Let $X$ be a prescribed set and $X'$ be a locally compact Hausdorff space. Suppose that the function $K:X\times X'\to\mathbb{R}$ satisfies $K(x,\cdot)\in C_0(X')$ for any $x\in X$ and the density condition \eqref{density_condition}. If $\mathcal{B}_K$ is an RKBS on $X$ defined by \eqref{RKBS B measure M(X)} endowed with the norm \eqref{norm RKBS B measure M(X)} and $B_K^0$ is the closed unit ball of ${\mathcal{B}_K}$ with center at the origin, then 
\begin{equation}\label{extreme point of RKBS unit ball}
    \mathrm{ext}\left(B_K^0\right)=\left\{-K(\cdot,x'), K(\cdot,x'):x'\in X'\right\}.
\end{equation}
\end{lemma}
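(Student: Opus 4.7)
The strategy is to transfer the problem from $\mathcal{B}_K$ to the measure space $\mathfrak{M}(X')$ via the linear isometric isomorphism $\Phi:\mathfrak{M}(X')\to\mathcal{B}_K$ defined in \eqref{isometric isomorphism from M(X') to BK}. Since $\Phi$ is linear and norm-preserving, it bijectively carries the closed unit ball $B_{\mathfrak{M}}:=\{\mu\in\mathfrak{M}(X'):\|\mu\|_{\mathrm{TV}}\leq 1\}$ onto $B_K^0$ and preserves convex combinations, so it maps extreme points to extreme points. Combined with the identity $\Phi(\delta_{x'})=K(\cdot,x')$, which follows from equation \eqref{delta-phi-K}, and the linearity of $\Phi$, it therefore suffices to establish $\mathrm{ext}(B_{\mathfrak{M}})=\{\pm\delta_{x'}:x'\in X'\}$; applying $\Phi$ then delivers \eqref{extreme point of RKBS unit ball}.

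For the inclusion $\{\pm\delta_{x'}:x'\in X'\}\subset\mathrm{ext}(B_{\mathfrak{M}})$, suppose $\delta_{x'}=(\mu_1+\mu_2)/2$ with $\mu_1,\mu_2\in B_{\mathfrak{M}}$. The triangle inequality combined with $\|\mu_1+\mu_2\|_{\mathrm{TV}}=2$ forces $\|\mu_1\|_{\mathrm{TV}}=\|\mu_2\|_{\mathrm{TV}}=1$. Decomposing $\mu_i=\mu_i(\{x'\})\delta_{x'}+\tilde\mu_i$ with $\tilde\mu_i(\{x'\})=0$, the relation $\tilde\mu_1+\tilde\mu_2=0$ together with $\mu_1(\{x'\})+\mu_2(\{x'\})=2$ and $|\mu_i(\{x'\})|\leq 1$ forces $\tilde\mu_1=\tilde\mu_2=0$ and $\mu_1(\{x'\})=\mu_2(\{x'\})=1$, i.e.\ $\mu_1=\mu_2=\delta_{x'}$. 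The case $-\delta_{x'}$ is symmetric.

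For the reverse inclusion, let $\mu\in\mathrm{ext}(B_{\mathfrak{M}})$. First $\|\mu\|_{\mathrm{TV}}=1$, otherwise $\mu$ would be the midpoint of two distinct elements of $B_{\mathfrak{M}}$ obtained by small symmetric perturbations. Next, using the Jordan decomposition $\mu=\mu^+-\mu^-$, if both parts were nonzero with $a:=\|\mu^+\|_{\mathrm{TV}}$, $b:=\|\mu^-\|_{\mathrm{TV}}$ (so $a+b=1$), then the identity $\mu=a(\mu^+/a)+b(-\mu^-/b)$ would express $\mu$ as a proper convex combination of two distinct unit-norm measures, contradicting extremality. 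Hence, up to sign, $\mu$ is a positive probability measure. If $\mu\neq\delta_{x'}$ for every $x'\in X'$, then, since regular Borel measures on a locally compact Hausdorff space whose atomic parts are not Dirac masses admit a nontrivial Borel partition of their support, there is a Borel set $E\subset X'$ with $0<\mu(E)<1$; the decomposition $\mu=\mu(E)(\mu|_E/\mu(E))+(1-\mu(E))(\mu|_{E^c}/(1-\mu(E)))$ then again contradicts extremality. Thus $\mu=\delta_{x'}$ for some $x'\in X'$.

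The main obstacle lies in the final step of the characterization of $\mathrm{ext}(B_{\mathfrak{M}})$: one must carefully combine the Hahn--Jordan decomposition with a split-by-a-Borel-subset argument and invoke the fact that atoms of a regular Borel measure on a locally compact Hausdorff space reduce to Dirac masses, to rule out any extreme point that is not of the form $\pm\delta_{x'}$.
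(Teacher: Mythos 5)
Your proposal follows essentially the same route as the paper: transfer the problem to $\mathfrak{M}(X')$ via the isometric isomorphism $\Phi$, which carries the unit ball onto $B_K^0$ and preserves extreme points, then combine $\mathrm{ext}(\mathfrak{M}^0(X'))=\{-\delta_{x'},\delta_{x'}:x'\in X'\}$ with $\Phi(\delta_{x'})=K(\cdot,x')$. The only difference is that the paper simply cites \cite{bredies2020sparsity} for the extreme-point characterization of the total-variation unit ball, whereas you prove it directly — correctly, via the Jordan decomposition and a Borel-splitting argument, where the existence of a set $E$ with $0<\mu(E)<1$ for a non-Dirac probability measure rests on regularity (a $\{0,1\}$-valued regular Borel measure is a Dirac mass), a point you gesture at somewhat loosely but which is standard.
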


\begin{proof}
Recall that the mapping $\Phi:\mathfrak{M}(X')\to\mathcal{B}_K$ defined by \eqref{isometric isomorphism from M(X') to BK} is an isometric isomorphism from $\mathfrak{M}(X')$ to $\mathcal{B}_K$. Let $\mathfrak{M}^0(X')$ denote the closed unit ball of $\mathfrak{M}(X')$ with center at the origin. Clearly, $\Psi$ is also an isometric isomorphism from $\mathfrak{M}^0(X')$ to $B^0_K$. Note that isometric isomorphisms from one normed space onto another preserve extreme points  (\cite{megginson2012introduction}).  Hence, we have that \begin{equation}\label{ext of BK0=rho ext measure}
     \mathrm{ext}(B_K^0)=\Phi(\mathrm{ext}(\mathfrak{M}^0(X'))).
 \end{equation} 
 It is known (\cite{bredies2020sparsity}) that $\mathrm{ext}\left(\mathfrak{M}^0(X')\right)=\{-\delta_{x'},\delta_{x'}:x'\in X'\}$, which together with equation \eqref{ext of BK0=rho ext measure} leads to  
 \begin{equation}\label{ext of BK}
  \mathrm{ext}(B_K^0)=\{-\Phi(\delta_{x'}),\Phi(\delta_{x'}):x'\in X'\}.
 \end{equation}
 Substituting $\Phi(\delta_{x'})=K(\cdot,x')$ for all $x'\in X'$ into \eqref{ext of BK}, we get the desired
result \eqref{extreme point of RKBS unit ball} of this lemma.
\end{proof}

Next, we characterize the extreme points of the subdifferential set $\partial\|\cdot\|_\infty(g)$ for a nonzero $g\in C_0(X')$. For each $g\in C_0(X')$, let $\mathcal{N}(g)$ denote the subset of $X'$ where  the function $g$ attains its supremum norm $\|g\|_\infty$, namely
\begin{equation}\label{def: infinity set for function}
    \mathcal{N}(g):=\left\{x'\in X':|g(x')|=\|g\|_\infty\right\}.
\end{equation}
For each $g\in C_0(X')$, we introduce a subset of ${\mathcal{B}_K}$ by 
\begin{equation}\label{def: Omage f}
    \Omega(g):=\left\{\mathrm{sign}(g(x'))K(\cdot,x'):x'\in\mathcal{N}(g)\right\}.
\end{equation}
We show in the next lemma that for each nonzero $g\in C_0(X')$, the set of the extreme points of the subdifferential set $\partial\|\cdot\|_{\infty}(g)$ coincides with $\Omega(g)$.

\begin{lemma}\label{lemma: extreme points measure space}
Let $X'$ be a locally compact Hausdorff space. If $g\in C_0(X')\backslash\{0\}$ and $\Omega(g)$ is defined by \eqref{def: Omage f}, then
\begin{equation}\label{extreme point partial infinity norm functions}
    \mathrm{ext}\left(\partial\|\cdot\|_{\infty}(g)\right)=\Omega(g).
\end{equation}
\end{lemma}

\begin{proof}
We first prove that  $\Omega(g)\subset\mathrm{ext}\left(\partial\|\cdot\|_{\infty}(g)\right)$. We assume that  $f\in\Omega(g)$ and proceed to show $f\in\mathrm{ext}\left(\partial\|\cdot\|_{\infty}(g)\right)$.
It follows from definition \eqref{def: Omage f} of $\Omega(g)$ that there exists $x'\in\mathcal{N}(g)$ such that 
\begin{equation}\label{form of h}
f=\mathrm{sign}(g(x'))K(\cdot,x').
\end{equation} 
We start with proving $f\in \partial\|\cdot\|_{\infty}(g)$. Recall $\Phi$ defined by \eqref{isometric isomorphism from M(X') to BK} provides an isometric isomorphism from $\mathfrak{M}(X')$ to $\mathcal{B}_K$ and $\Phi(\delta_{x'})=K(\cdot,x')$. Noticing that $\delta_{x'}\in\mathfrak{M}(X')$ satisfies $\|\delta_{x'}\|_{\mathrm{TV}}=1$, we obtain that $\|K(\cdot,x')\|_{\mathcal{B}_K}=1$. This together equation \eqref{form of h} leads directly to  $\|f\|_{\mathcal{B}_K}=1$. According to the reproducing property \eqref{reproducing property in B'}, we have that $\langle g,f\rangle_{\mathcal{B}_K}=\mathrm{sign}(g(x'))g(x')$. Noting that $x'\in\mathcal{N}(g)$, the above equation implies $\langle g,f\rangle_{\mathcal{B}_K}=\|g\|_{\infty}$. Hence, we conclude by equation \eqref{subdifferential = norming functional} that $f\in \partial\|\cdot\|_{\infty}(g)$.
It suffices to show that for any $f_1,f_2\in\partial\|\cdot\|_{\infty}(g)$ satisfying $f=(f_1+f_2)/2$, there holds $f=f_1=f_2$. Since $g\neq0$ and $x'\in\mathcal{N}(g)$, we get that $g(x')\neq0$. Lemma \ref{lemma: extreme points of RKBS B} ensures that function $f$ with the form \eqref{form of h} satisfies $f\in\mathrm{ext}(B_K^0)$. Moreover, it follows from $\partial\|\cdot\|_{\infty}(g)\subset B_K^0$ that   $f_1,f_2\in B_K^0$. This combined with $f\in\mathrm{ext}(B_K^0)$ and the definition of extreme points leads to $f=f_1=f_2$. Again, using the definition of extreme points, we obtain that $f\in\mathrm{ext}\left(\partial\|\cdot\|_{\infty}(g)\right)$.  

It remains to show that  $\mathrm{ext}\left(\partial\|\cdot\|_{\infty}(g)\right)\subset\Omega(g)$. Assume that $f\in\mathrm{ext}\left(\partial\|\cdot\|_{\infty}(g)\right)$. %
It follows from Proposition \ref{prop: ext is smaller} that $\mathrm{ext}\left(\partial\|\cdot\|_{\infty}(g)\right)\subset\mathrm{ext}\left(B^0_K\right).$ Hence, $f\in\mathrm{ext}\left(B^0_K\right)$. Lemma \ref{lemma: extreme points of RKBS B} ensures that there exist $\alpha\in\{-1,1\}$ and $x'\in X'$ such that 
\begin{equation}\label{f-alpha-k}
   f=\alpha K(\cdot,x'). 
\end{equation} 
Since $f\in\mathrm{ext}\left(\partial\|\cdot\|_{\infty}(g)\right)$, we get by equation \eqref{subdifferential = norming functional} that
$\langle g,f\rangle_{\mathcal{B}_K}=\|g\|_\infty.$
Substituting representation \eqref{f-alpha-k} into the above equation, we get that $\|g\|_\infty=\alpha\langle g,K(\cdot,x')\rangle_{\mathcal{B}_K}$, which together with the reproducing property \eqref{reproducing property in B'} leads to   $\|g\|_\infty=\alpha g(x')$. Clearly, $x'\in\mathcal{N}(g)$ and $\alpha=\mathrm{sign}(g(x'))$. Due to the definition \eqref{def: Omage f} of $\Omega(g)$, we conclude that $f\in\Omega(g)$.
\end{proof}

Note that for any $g\in C_0(X')$, the set $\mathcal{N}(g)$ is bounded in $X'$ but its cardinality may not be finite. To ensure that the RKBS $\mathcal{B}_K$ satisfies Assumption (A1), we need to impose the following  assumption on the reproducing kernel $K$. 
\vspace{2mm}

(A3) For any nonzero $g\in\mathcal{V}$, with $\mathcal{V}$ being defined by \eqref{V_span_kernel}, the cardinality of $\mathcal{N}(g)$ is finite. %

\vspace{2mm}
Below, we present an example of kernel $K$ which satisfies Assumption (A3). Let $X=X'=\mathbb{R}$ and $K$ be the Gaussian kernel defined by 
\eqref{Gausskernel} with $d=1$. Suppose that $x_j$, $j\in\mathbb{N}_n$, are $n$ distinct points in $\mathbb{R}$. We will show that for any nonzero $g\in C_0(\mathbb{R})$ having the form $g:=\sum_{j\in\mathbb{N}_n}\alpha_j K(x_j,\cdot)$ with $\alpha_j\in\mathbb{R}$, $j\in\mathbb{N}_n$, the subset $\mathcal{N}(g)$, defined by \eqref{def: infinity set for function}, has a finite cardinality. By the definition \eqref{Gausskernel} of $K$, $g$ is differentiable. We denote by $\mathcal{Z}(g)$ the set of the zeros of the derivative $g'$, that is, 
$\mathcal{Z}(g):=\{x\in\mathbb{R}:g'(x)=0\}.$
Clearly, $\mathcal{N}(g)\subset\mathcal{Z}(g)$. We next show that  $\mathcal{N}(g)$ has a finite cardinality.
Assume to the contrary that $\mathcal{N}(g)$ is infinite. We let $x_j$, $j\in\mathbb{N}$, be a sequence of distinct points in $\mathcal{N}(g)$. It follows from $\mathcal{N}(g)\subset\mathcal{Z}(g)$ that 
$x_j\in\mathcal{Z}(g)$, $j\in\mathbb{N}$. 
Since $\mathcal{N}(g)$ is bounded, the sequence $x_j$, $j\in\mathbb{N}$, is also bounded. Hence, Bolzano-Weierstrass theorem ensures that there is a subsequence $x_{j_k}$, $k\in\mathbb{N}$, which converges to some  $x\in\mathbb{R}$. 
According to the continuity of $g'$, we obtain that $\lim_{k\to\infty}g'(x_{j_k})=g'(x)$, which together with $x_{j_k}\in\mathcal{Z}(g)$, $k\in\mathbb{N}$, implies that $x\in\mathcal{Z}(g)$. Consequently, we conclude that $x$ is a cumulative point of $\mathcal{Z}(g)$. However, the set $\mathcal{Z}(g)$, as the set of the zeros of the analytic function $g'$, has no cumulative points, a contradiction. Therefore, the set $\mathcal{N}(g)$ must have a finite cardinality. That is, the Gaussian kernel with the form \eqref{Gausskernel} satisfies Assumption (A3).

Below, we validate that the RKBS $\mathcal{B}_K$ satisfies Assumptions (A1) and (A2) when the kernel $K$ satisfies  Assumption (A3). 

\begin{lemma}\label{lemma: A1-A2-BK}
Let $X$ be a prescribed set and $X'$ be a locally compact Hausdorff space. Suppose that the function $K:X\times X'\to\mathbb{R}$ satisfies $K(x,\cdot)\in C_0(X')$ for any $x\in X$ and the density condition \eqref{density_condition}. Let $x_j\in X$, $j\in\mathbb{N}_n$, be $n$ distinct points in $X$ and $K(x_j,\cdot)$, $j\in\mathbb{N}_n$, be linearly independent elements in $C_0(X')$, and let $\mathcal{V}$ be defined by \eqref{V_span_kernel}. If $K$ satisfies Assumption (A3), then the RKBS $\mathcal{B}_K$ defined by \eqref{RKBS B measure M(X)} endowed with the norm \eqref{norm RKBS B measure M(X)} satisfies Assumption (A1) with $X_g':=\mathcal{N}(g)$ for any $g\in C_0(X')$ and Assumption (A2) with $C:=1$. 
\end{lemma}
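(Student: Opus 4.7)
The plan is to verify (A1) and (A2) separately, leveraging the two structural results already established for $\mathcal{B}_K$: the extreme-point characterization in Lemma \ref{lemma: extreme points measure space} and the isometric isomorphism $\Phi:\mathfrak{M}(X')\to\mathcal{B}_K$ introduced in the proof of Proposition \ref{prop: B M(X) is an RKBS}.

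For Assumption (A1), I would fix an arbitrary nonzero $g\in\mathcal{V}$. Since $\mathcal{V}\subset C_0(X')$ and $g\neq 0$, Lemma \ref{lemma: extreme points measure space} applies and yields
\[
\mathrm{ext}\bigl(\partial\|\cdot\|_{\infty}(g)\bigr)=\Omega(g)=\bigl\{\mathrm{sign}(g(x'))K(\cdot,x'):x'\in\mathcal{N}(g)\bigr\}.
\]
Because $\mathrm{sign}(g(x'))\in\{-1,1\}$ for every $x'\in\mathcal{N}(g)$, this set is contained in $\{-K(\cdot,x'),K(\cdot,x'):x'\in\mathcal{N}(g)\}$. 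Finally, Assumption (A3) guarantees that $\mathcal{N}(g)$ is a finite subset of $X'$, so choosing $X_g':=\mathcal{N}(g)$ produces the finite set required by (A1).

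For Assumption (A2), I would fix $m\in\mathbb{N}$, distinct points $x_j'\in X'$, $j\in\mathbb{N}_m$, and scalars $\bm{\alpha}=[\alpha_j:j\in\mathbb{N}_m]\in\mathbb{R}^m$, and consider the discrete measure $\mu:=\sum_{j\in\mathbb{N}_m}\alpha_j\delta_{x_j'}\in\mathfrak{M}(X')$. The identity $\Phi(\delta_{x'})=K(\cdot,x')$ established in Proposition \ref{prop: adjoint RKBS of measure RKBS} and the linearity of $\Phi$ give
\[
\sum_{j\in\mathbb{N}_m}\alpha_j K(\cdot,x_j')=\Phi(\mu).
\]
Because $\Phi$ is an isometric isomorphism and the norm on $\mathcal{B}_K$ is $\|f_\mu\|_{\mathcal{B}_K}=\|\mu\|_{\mathrm{TV}}$, it follows that $\|\sum_{j}\alpha_j K(\cdot,x_j')\|_{\mathcal{B}_K}=\|\mu\|_{\mathrm{TV}}$. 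The only nonroutine step is the evaluation of the total variation norm of $\mu$: since the points $x_j'$ are distinct, the Jordan decomposition of $\mu$ is simply $\mu^+=\sum_{\alpha_j>0}\alpha_j\delta_{x_j'}$ and $\mu^-=-\sum_{\alpha_j<0}\alpha_j\delta_{x_j'}$, whence $\|\mu\|_{\mathrm{TV}}=\sum_{j\in\mathbb{N}_m}|\alpha_j|=\|\bm{\alpha}\|_1$. Combining the two equalities proves (A2) with $C:=1$.

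The only real obstacle is the total variation computation, and it is handled by the disjointness of the supports of the positive and negative parts of $\mu$, which follows from the distinctness hypothesis on the points $x_j'$. Everything else is a direct citation of Lemma \ref{lemma: extreme points measure space}, Assumption (A3), and the isometric isomorphism $\Phi$ already in hand.
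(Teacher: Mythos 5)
Your proposal is correct and follows essentially the same route as the paper: (A1) via Lemma \ref{lemma: extreme points measure space} together with Assumption (A3), and (A2) by identifying $\sum_j\alpha_j K(\cdot,x_j')$ with the discrete measure $\sum_j\alpha_j\delta_{x_j'}$ and using the total-variation norm. Your explicit Jordan-decomposition justification of $\|\sum_j\alpha_j\delta_{x_j'}\|_{\mathrm{TV}}=\|\bm{\alpha}\|_1$ is merely a more detailed version of a step the paper states as clear, not a different argument.
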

\begin{proof}
We first show that Assumption (A1) holds. For a nonzero $g\in\mathcal{V}$, Lemma \ref{lemma: extreme points measure space} ensures  that $\mathrm{ext}\left(\partial\|\cdot\|_{\infty}(g)\right)=\Omega(g)$, which together with definition \eqref{def: Omage f} of $\Omega(g)$ leads to
\begin{equation*}
\mathrm{ext}(\partial\|\cdot\|_{\infty}(g))=\left\{\mathrm{sign}(g(x'))K(\cdot,x'):x'\in\mathcal{N}(g)\right\}.
\end{equation*}
Note that for any $x'\in\mathcal{N}(g)$, we have that $g(x')\neq 0$ as $g$ is nonzero. Hence, we get from the above equation that 
\begin{equation*}
\mathrm{ext}(\partial\|\cdot\|_{\infty}(g))\subset\left\{-K(\cdot,x_j'),K(\cdot,x_j'):x_j'\in\mathcal{N}(g)\right\}.
\end{equation*}
Assumption (A3) guarantees the finite cardinality of the set  $\mathcal{N}(g)$. Thus, Assumption (A1) holds with $X_g':=\mathcal{N}(g)$  for any $g\in\mathcal{V}$.
    
We next prove that Assumption (A2) holds.
For any $m\in\mathbb{N}$, distinct points $x_j'\in X'$, $j\in\mathbb{N}_m$, and $\bm{\alpha}=[\alpha_j:j\in\mathbb{N}_m]\in\mathbb{R}^m$, it follows from definition \eqref{norm RKBS B measure M(X)} of the norm of $\mathcal{B}_K$ that 
{\small\begin{equation*}
\left\|\sum_{j\in\mathbb{N}_m}\alpha_j K(\cdot, x_j')\right\|_{\mathcal{B}_K}=\left\|\sum_{j\in\mathbb{N}_m}\alpha_j\delta_{x_j'}\right\|_{\mathrm{TV}}.
\end{equation*}}
It is clear that $\|\sum_{j\in\mathbb{N}_m}\alpha_j\delta_{x_j'}\|_{\mathrm{TV}}=\|\bm{\alpha}\|_1$, which together with the above equation leads to 
$\|\sum_{j\in\mathbb{N}_m}\alpha_j K(\cdot, x_j')\|_{\mathcal{B}_K}=\|\bm{\alpha}\|_1.$
Therefore, Assumption (A2) holds with $C:=1$. 
\end{proof}

We are ready to establish the sparse representation theorems for the solutions of the MNI problem \eqref{MNI in RKBS B measure M(X)} and the regularization problem \eqref{eq: regularization problem RKBS B measure M(X)}. Suppose that kernel $K$ satisfies Assumption (A3). For each $g\in\mathcal{V}$, we denote by $n(g)$ the cardinality of $\mathcal{N}(g)$ and suppose that $\mathcal{N}(g)=\{x_j'\in X':j\in\mathbb{N}_{n(g)}\}$. For each $g\in\mathcal{V}$, we introduce a kernel matrix by
\begin{equation}\label{def: matrix V g}
\mathbf{V}_{g}:=[K(x_i,x_j'):i\in\mathbb{N}_n,j\in\mathbb{N}_{n(g)}]\in \mathbb{R}^{n\times n(g)}.
\end{equation}
In the following theorems, we always let $X$ be a prescribed set and $X'$ be a locally compact Hausdorff space. Suppose that the function $K:X\times X'\to\mathbb{R}$ satisfies $K(x,\cdot)\in C_0(X')$ for any $x\in X$ and the density condition \eqref{density_condition} and that $\mathcal{B}_K$ is the RKBS on $X$ defined by \eqref{RKBS B measure M(X)} endowed with the norm \eqref{norm RKBS B measure M(X)}. In addition, let $x_j\in X$, $j\in\mathbb{N}_n$, be $n$ distinct points in $X$ and $K(x_j,\cdot)$, $j\in\mathbb{N}_n$, be linearly independent elements in $C_0(X')$. 

For the MNI problem \eqref{MNI in RKBS B measure M(X)}, we get the following sparse representer theorem by employing Theorem \ref{theorem: sparser representer for MNI}.

\begin{theorem}\label{theorem: MNI representer for RKBS B measure M(X)}
    Let $\mathbf{y}\in\mathbb{R}^n\backslash\{\bm{0}\}$, $\mathcal{V}$, and $\mathcal{M}_{\mathbf{y}}$ be defined by \eqref{V_span_kernel} and \eqref{hyperplane YES DC}, respectively. Suppose that $\hat g\in\mathcal{V}$ satisfy 
    \begin{equation}\label{non empty set: measure case}
    (\|\hat g\|_{\infty}{\mathrm{co}}(\Omega(\hat g)))\cap{\mathcal{M}}_{\mathbf{y}}\neq\emptyset. 
    \end{equation}
    If Assumption (A3) holds,  $\mathcal{N}(\hat{g})$ and  $\mathbf{V}_{\hat g}$ are  defined by \eqref{def: infinity set for function} and \eqref{def: matrix V g} with $g:=\hat g$, respectively, then for any $\hat f\in\mathrm{ext}\left(\rm{S}({\mathbf{y}})\right)$, there exist $\hat{\alpha}_j\neq 0$, $j\in\mathbb{N}_{M}$, with  $\sum_{j\in\mathbb{N}_{M}}|\hat{\alpha}_j|=\|\hat g\|_{\infty}$ and $x_j'\in\mathcal{N}(\hat g)$, $j\in\mathbb{N}_{M}$, such that 
    \begin{equation}\label{eq: representer for RKBS}
        \hat f=\sum\limits_{j\in\mathbb{N}_{M}} \hat{\alpha}_j K(\cdot,x_j'),
    \end{equation}
    for some positive integer $ {M}\leq\mathrm{rank}(\mathbf{V}_{\hat g})$,
\end{theorem}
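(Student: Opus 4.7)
The plan is to deduce this theorem by specializing the general sparse representer Theorem \ref{theorem: sparser representer for MNI} to the concrete setting of $\mathcal{B}_K$, so the core task is to verify all hypotheses of that theorem and then translate the conclusion. By Proposition \ref{prop: B M(X) is an RKBS}, $\mathcal{B}_K$ is an RKBS with pre-dual space $C_0(X')$, and by Propositions \ref{prop: measure space delta dual ii to C0} and \ref{prop: adjoint RKBS of measure RKBS}, $C_0(X')$ is both the $\delta$-dual and the adjoint RKBS, with $K$ as the reproducing kernel. Under the pre-dual identification, the point-evaluation functional $\delta_{x_j}$ on $\mathcal{B}_K$ is precisely the element $K(x_j,\cdot)\in C_0(X')$, so the operator $\mathcal{L}$ defined by \eqref{L YES DC} matches the general form \eqref{operator L} with $\nu_j:=K(x_j,\cdot)$; by hypothesis these $\nu_j$ are linearly independent. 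Moreover, under Assumption (A3), Lemma \ref{lemma: A1-A2-BK} ensures that $\mathcal{B}_K$ satisfies Assumption (A1) with $X'_g:=\mathcal{N}(g)$ and Assumption (A2) with $C:=1$.

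The next step is to convert the hypothesis \eqref{non empty set: measure case} into the non-emptiness condition \eqref{Non-empty-set} demanded by Theorem \ref{theorem: sparser representer for MNI}. The key observation is that for nonzero $g\in\mathcal{V}$ the identity $\partial\|\cdot\|_\infty(g)=\mathrm{co}(\Omega(g))$ holds: indeed, $\partial\|\cdot\|_\infty(g)$ is a weakly$^*$ compact convex subset of $C_0(X')^*=\mathcal{B}_K$, Lemma \ref{lemma: extreme points measure space} identifies its extreme-point set with $\Omega(g)$, and under Assumption (A3) the set $\Omega(\hat g)$ is finite, whence its convex hull is already weakly$^*$ closed and the Krein--Milman theorem yields the equality. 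Substituting this into \eqref{non empty set: measure case} with $g:=\hat g$ produces $(\|\hat g\|_\infty\partial\|\cdot\|_\infty(\hat g))\cap\mathcal{M}_{\mathbf{y}}\neq\emptyset$, which is exactly \eqref{Non-empty-set} for the pre-dual $C_0(X')$. In addition, by the reproducing property \eqref{reproducing property in B'} one has $\langle K(x_i,\cdot),K(\cdot,x'_j)\rangle_{\mathcal{B}_K}=K(x_i,x'_j)$ for every $i\in\mathbb{N}_n$ and $j\in\mathbb{N}_{n(\hat g)}$, so the data matrix $\mathbf{L}_{\hat g}$ of \eqref{finite matrix V hat nu} coincides with the kernel matrix $\mathbf{V}_{\hat g}$ of \eqref{def: matrix V g}.

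With all hypotheses of Theorem \ref{theorem: sparser representer for MNI} verified and all quantities aligned, that theorem applies verbatim: for any $\hat f\in\mathrm{ext}(\rm{S}(\mathbf{y}))$ there exist $\hat\alpha_j\neq 0$, $j\in\mathbb{N}_M$, with $\sum_{j\in\mathbb{N}_M}|\hat\alpha_j|=\|\hat g\|_\infty/C=\|\hat g\|_\infty$, and $x'_j\in X'_{\hat g}=\mathcal{N}(\hat g)$ such that the kernel expansion \eqref{eq: representer for RKBS} holds with $M\leq\mathrm{rank}(\mathbf{L}_{\hat g})=\mathrm{rank}(\mathbf{V}_{\hat g})$. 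I expect no genuine new obstacle in the proof itself; the real work has already been carried out in Lemmas \ref{lemma: extreme points measure space} and \ref{lemma: A1-A2-BK}, and the main subtlety is simply the careful bookkeeping needed to translate the abstract objects ($\mathcal{B}_*$, $\nu_j$, $\mathbf{L}_{\hat\nu}$, $X'_{\hat\nu}$, $C$) into their measure-space counterparts ($C_0(X')$, $K(x_j,\cdot)$, $\mathbf{V}_{\hat g}$, $\mathcal{N}(\hat g)$, $1$) and to justify $\partial\|\cdot\|_\infty(\hat g)=\mathrm{co}(\Omega(\hat g))$ before invoking the master theorem.
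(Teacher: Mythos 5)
Your proposal is correct and follows essentially the same route as the paper's proof: verify via Propositions \ref{prop: measure space delta dual ii to C0} and \ref{prop: adjoint RKBS of measure RKBS} and Lemma \ref{lemma: A1-A2-BK} that $\mathcal{B}_K$ with pre-dual $C_0(X')$ meets the hypotheses of Theorem \ref{theorem: sparser representer for MNI} with $C=1$ and $X'_{\hat g}=\mathcal{N}(\hat g)$, use Lemma \ref{lemma: extreme points measure space}, Krein--Milman and the finiteness from Assumption (A3) to show $\partial\|\cdot\|_\infty(\hat g)=\mathrm{co}(\Omega(\hat g))$ so that \eqref{non empty set: measure case} gives \eqref{Non-empty-set}, and identify $\mathbf{L}_{\hat g}=\mathbf{V}_{\hat g}$ via the reproducing property before invoking the master theorem. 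No gaps; this matches the paper's argument step for step.
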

\begin{proof}
We prove this result by employing Theorem \ref{theorem: sparser representer for MNI}. We first point out that the hypotheses about the RKBS in Theorem \ref{theorem: sparser representer for MNI} are satisfied. The RKBS $\mathcal{B}_K$ guaranteed by Proposition \ref{prop: measure space delta dual ii to C0} has $C_0(X')$ as its pre-dual space and $K$ as its reproducing kernel. In addition, according to Proposition \ref{prop: adjoint RKBS of measure RKBS}, $\mathcal{B}_K$ and $C_0(X')$ are a pair of RKBSs. 
We next verify that $\hat g$ satisfies \eqref{Non-empty-set}. Lemma \ref{lemma: extreme points measure space} ensures that $\mathrm{ext}\left(\partial\|\cdot\|_{\infty}(\hat{g})\right)=\Omega(\hat{g})$, which together with Krein-Milman theorem leads to $\partial \|\cdot\|_\infty(\hat g)=\overline{\mathrm{co}}(\Omega(\hat g))$. It follows from Assumption (A3) that $\Omega(\hat g)$ is of finite cardinality. Thus, the above equation reduces to $\partial \|\cdot\|_\infty(\hat g)={\mathrm{co}}(\Omega(\hat g))$. Substituting this equation into relation \eqref{non empty set: measure case} leads to  $(\|\hat{g}\|_{\infty}\partial\|\cdot\|_\infty(\hat g))\cap{\mathcal{M}}_{\mathbf{y}}\neq\emptyset$. That is, $\hat{g}$ satisfies \eqref{Non-empty-set}. Finally, according to Lemma \ref{lemma: A1-A2-BK}, we have that Assumption (A1) holds with $X_g':=\mathcal{N}(g)$ for any $g\in\mathcal{V}$ 
and Assumption (A2) holds with $C:=1$. Thus, the hypotheses in Theorem \ref{theorem: sparser representer for MNI} are all satisfied. By the reproducing property \eqref{reproducing property in B'}, there holds for each $i\in\mathbb{N}_n$, $j\in\mathbb{N}_{n(\hat{g})}$, that $\langle K(x_i,\cdot), K(\cdot,x_j')\rangle_{\mathcal{B}_K}=K(x_i,x_j'),$
which shows that the matrix $\mathbf{L}_{\hat{g}}$ defined by \eqref{finite matrix V hat nu} coincides with the matrix $\mathbf{V}_{\hat g}$. 
Hence, Theorem \ref{theorem: sparser representer for MNI} combined with $X'_{\hat{g}}:=\mathcal{N}(\hat{g})$ and $\mathbf{L}_{\hat{g}}:=\mathbf{V}_{\hat g}$ guarantees that any extreme point $\hat f$ of $\mathrm{S}(\mathbf{y})$ can be expressed in the form of \eqref{eq: representer for RKBS}. 
\end{proof}

Theorem \ref{theorem: MNI representer for RKBS B measure M(X)} is again data-dependent, similar to Theorem \ref{theorem: representer theorem for l1 with rank}. It improves the data-independent representer theorem, presented in \cite{bartolucci2023understanding}, which expresses the extreme point of the solution set of the MNI problem \eqref{MNI in RKBS B measure M(X)} in terms of a linear combination of $n$ elements from the set $\mathrm{ext}(B_K^0)$, where $B_K^0$ is the closed unit ball in $\mathcal{B}_K$, independent of the given data. While Theorem \ref{theorem: MNI representer for RKBS B measure M(X)} expresses an extreme point of the solution set of the MNI problem \eqref{MNI in RKBS B measure M(X)} as a linear combination of at most $M$ elements of $\mathrm{ext}(\partial\|\cdot\|_\infty(\hat g))$, which is a much smaller data-dependent subset of $\mathrm{ext}(B_K^0)$. 

By specializing Theorem \ref{theorem: sparser representer theorem regularization} to the RKBS $\mathcal{B}_K$, we establish below the sparse representer theorem for the solutions of the regularization problem \eqref{eq: regularization problem RKBS B measure M(X)}.

\begin{theorem}\label{theorem: representer for regularization in RKBS B measure M(X)}
Suppose that $\mathbf{y}_0\in\mathbb{R}^n$, $\lambda>0$ and that  $\mathcal{Q}_{\mathbf{y}_0}$ is lower semi-continuous and convex. Let $\mathcal{V}$ be defined by \eqref{V_span_kernel}. If Assumption (A3) holds, then every nonzero $\hat{f}\in\mathrm{ext}\left(\rm{R}({\mathbf{y}_0})\right)$ has the representation \eqref{eq: representer for RKBS} for some $\hat{g}\in\mathcal{V}$, positive integer $M\leq\mathrm{rank}(\mathbf{V}_{\hat{g}})$, $\alpha_j\neq 0$, $j\in\mathbb{N}_{M}$, with $\sum_{j\in\mathbb{N}_{M}}|\alpha_j|=\|\hat{g}\|_{\infty}$ and $x_j'\in\mathcal{N}(\hat{g})$, $j\in\mathbb{N}_M$. 
\end{theorem}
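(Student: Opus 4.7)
The plan is to specialize statement 2 of Theorem \ref{theorem: sparser representer theorem regularization} to the RKBS $\mathcal{B}_K$, in direct parallel with the proof of Theorem \ref{theorem: representer for regularization in l1} for $\ell_1(\mathbb{N})$. First I would collect the structural facts about $\mathcal{B}_K$: Proposition \ref{prop: B M(X) is an RKBS} says that $C_0(X')$ is a pre-dual of $\mathcal{B}_K$, Proposition \ref{prop: measure space delta dual ii to C0} identifies $K$ as the reproducing kernel, and Proposition \ref{prop: adjoint RKBS of measure RKBS} shows that $C_0(X')$ is the adjoint RKBS, so that $\mathcal{B}_K$ and $C_0(X')$ form a pair of RKBSs, which is exactly the setting assumed at the beginning of Section \ref{sec: sparse representer theorem of regularization}.

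Next I would verify the remaining hypotheses of Theorem \ref{theorem: sparser representer theorem regularization}. Lemma \ref{lemma: A1-A2-BK}, applicable thanks to Assumption \Hprimelabel, yields Assumption (A1) with $X_g' := \mathcal{N}(g)$ for each $g \in \mathcal{V}$, and Assumption (A2) with $C := 1$. The regularizer in problem \eqref{eq: regularization problem RKBS B measure M(X)} is $\varphi(t) := t$, which is continuous, differentiable, convex, strictly increasing, and coercive on $\mathbb{R}_+$. Combined with the standing hypothesis that $\mathcal{Q}_{\mathbf{y}_0}$ is lower semi-continuous and convex, every hypothesis of statement 2 of Theorem \ref{theorem: sparser representer theorem regularization} is met, so every nonzero extreme point of $\mathrm{R}(\mathbf{y}_0)$ admits a representation of the form \eqref{eq: solution hat f regularization} for some $\hat g \in \mathcal{V}$.

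The final step is to translate that representation into the notation of the measure RKBS. By the reproducing property \eqref{reproducing property in B'}, we have $\langle K(x_i,\cdot), K(\cdot,x_j')\rangle_{\mathcal{B}_K} = K(x_i,x_j')$ for all $i\in\mathbb{N}_n$ and $j\in\mathbb{N}_{n(\hat g)}$, so the generic matrix $\mathbf{L}_{\hat\nu}$ of \eqref{finite matrix V hat nu} coincides here with the kernel matrix $\mathbf{V}_{\hat g}$ defined in \eqref{def: matrix V g}. Under the identifications $X'_{\hat g} := \mathcal{N}(\hat g)$ and $C := 1$, the expansion \eqref{eq: solution hat f regularization} becomes \eqref{eq: representer for RKBS}, and the normalization $\sum_{j \in \mathbb{N}_M}|\hat\alpha_j| = \|\hat g\|_{\mathcal{B}_*}/C$ reduces to $\sum_{j \in \mathbb{N}_M}|\hat\alpha_j| = \|\hat g\|_\infty$, matching the stated conclusion. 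I expect no genuine obstacle here, since all nontrivial analysis has been deposited into Lemma \ref{lemma: A1-A2-BK} and into the general representer theorem; the only care required is to keep the various identifications (pre-dual, kernel, set $X'_{\hat g}$, matrix $\mathbf{L}_{\hat g}$, and constant $C$) consistent throughout.
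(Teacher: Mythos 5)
Your proposal is correct and follows essentially the same route as the paper: verify via Propositions \ref{prop: measure space delta dual ii to C0} and \ref{prop: adjoint RKBS of measure RKBS} that $\mathcal{B}_K$ and $C_0(X')$ are a pair of RKBSs with $C_0(X')$ as pre-dual, invoke Lemma \ref{lemma: A1-A2-BK} for Assumptions (A1) and (A2) with $X_g':=\mathcal{N}(g)$ and $C:=1$, note that $\varphi(t):=t$ is convex and strictly increasing, and then apply statement 2 of Theorem \ref{theorem: sparser representer theorem regularization} with $\mathbf{L}_{\hat g}$ identified with $\mathbf{V}_{\hat g}$. Your explicit check via the reproducing property that $\mathbf{L}_{\hat g}=\mathbf{V}_{\hat g}$ is a detail the paper relegates to the proof of Theorem \ref{theorem: MNI representer for RKBS B measure M(X)}, but it is not a different argument.
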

 
\begin{proof}
    As has been shown in Propositions \ref{prop: measure space delta dual ii to C0} and \ref{prop: adjoint RKBS of measure RKBS}, $\mathcal{B}_K$ and $C_0(X')$ are a pair of RKBSs with $K$ being the reproducing kernel of $\mathcal{B}_K$ and moreover, $\mathcal{B}_K$ takes $C_0(X')$ as its pre-dual. Since Assumption (A3) holds, Lemma \ref{lemma: A1-A2-BK} ensures that Assumption (A1) is satisfied with $X_g':=\mathcal{N}(g)$ for any $g\in\mathcal{V}$ and Assumption (A2) is satisfied with $C:=1$. We notice that the regularizer having the form $\varphi(t):=t$, $t\in\mathbb{R}_{+}$, is continuous, convex and strictly increasing. That is,  the hypotheses of Theorem \ref{theorem: sparser representer theorem regularization} are satisfied. We then obtain the desired result by using Theorem \ref{theorem: sparser representer theorem regularization} and replacing $X'_{\hat{g}}$ and $\mathbf{L}_{\hat{g}}$  by $\mathcal{N}(\hat{g})$ and $\mathbf{V}_{\hat g}$, respectively. 
\end{proof}

Remarks on the relation of Theorems \ref{theorem: MNI representer for RKBS B measure M(X)} and \ref{theorem: representer for regularization in RKBS B measure M(X)} with the result of \cite{song2013reproducing} are in order.
The RKBS with the $\ell_1$ norm considered in \cite{song2013reproducing} is a subspace of the RKBS defined by \eqref{RKBS B measure M(X)} and \eqref{norm RKBS B measure M(X)}. Representer theorems for the MNI and the regularization problems were also obtained in  \cite{song2013reproducing} under an additional admissibility condition. Our results differ from those of \cite{song2013reproducing} in several aspects. First of all, the sparse representer theorems presented in Theorems \ref{theorem: MNI representer for RKBS B measure M(X)} and \ref{theorem: representer for regularization in RKBS B measure M(X)} do not require the admissibility condition on the kernel function as  \cite{song2013reproducing} requires. 
Second, \cite{song2013reproducing} represented solutions as in \eqref{eq: representer for RKBS} with $x_j'$, $j\in\mathbb{N}_n$, being given  training samples, while we represent the extreme points of the solution sets in the form \eqref{eq: representer for RKBS}, with the points $x_j'$, $j\in\mathbb{N}_M$, not necessarily training samples. We also provide a precise characterization for the points $x_j'$, $j\in\mathbb{N}_M$, which belong to the data-dependent set $\mathcal{N}(\hat g)$. Finally, the number $n$ of the kernel sections in the solution representation of \cite{song2013reproducing} coincides with the number of the training samples, while the number $M$ of the kernel sections in representation \eqref{eq: representer for RKBS} may be smaller than the number of the training samples. 

The regularization parameter $\lambda$ in \eqref{eq: regularization problem RKBS B measure M(X)} allows to further promote the sparsity level of the regularized solutions. We show in the next theorem how the choice of the parameter $\lambda$ can accomplish it by employing Theorem \ref{theorem: lambda is greater than}. 

\begin{theorem}
 Suppose that $\mathbf{y}_0\in\mathbb{R}^n$, $\lambda>0$ and that  $\mathcal{Q}_{\mathbf{y}_0}$ is lower semi-continuous and convex. Let $\mathcal{D}_{\lambda,\mathbf{y}_0}$ be defined by \eqref{Dy} with $\mathbf{y}:=\mathbf{y}_0$, $\hat{\mathbf{z}}\in\mathcal{D}_{\lambda,\mathbf{y}_0}\backslash\{\mathbf{0}\}$ and let $\mathcal{V}$ be defined by \eqref{V_span_kernel}, $\hat{g}\in\mathcal{V}$ satisfy \eqref{non empty set: measure case} with $\mathbf{y}:=\hat{\mathbf{z}}$. If Assumption (A3) holds, $\mathcal{N}(\hat g)$ and $\mathbf{V}_{\hat{g}}$ be defined by \eqref{def: infinity set for function} and \eqref{def: matrix V g} with $g:=\hat{g}$, respectively, then problem \eqref{eq: regularization problem RKBS B measure M(X)} with $\mathbf{y}:=\mathbf{y}_0$ has a solution  $\hat{f}=\sum_{i\in\mathbb{N}_{l}}\hat{\alpha}_{k_i} K(\cdot,x_{k_i})$ with $\hat{\alpha}_{k_i}\in\mathbb{R}\setminus\{0\}$,  $x_{k_i}\in\mathcal{N}(\hat g)$, $i\in\mathbb{N}_{l}$ for some $l\in\mathbb{N}_{n(\hat{g})}$ if and only if there exists 
    $\mathbf{a}\in\partial \mathcal{Q}_{\mathbf{y}}(\mathbf{V}_{\hat{g}}\hat{\bm{\alpha}})$ for $\hat{\bm{\alpha}}:=\sum_{i\in\mathbb{N}_{l}}\hat{\alpha}_{k_i}\mathbf{e}_{k_i}$ such that      
    \begin{equation}\label{lambda is greater than measure case}
         \lambda=-(\mathbf{V}_{\hat{g}}^\top\mathbf{a})_{k_i}\mathrm{sign}(\hat\alpha_{k_i}),  \ \ i\in\mathbb{N}_l,\text{ and }
        \lambda\geq |(\mathbf{V}_{\hat{g}}^\top\mathbf{a})_{j}|, \ \ j\in\mathbb{N}_{n(\hat g)}\backslash\{k_i:i\in\mathbb{N}_l\}.
    \end{equation} 
\end{theorem}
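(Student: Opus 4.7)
The plan is to deduce this theorem as a direct specialization of Theorem \ref{theorem: lambda is greater than} to the RKBS $\mathcal{B}_K$, in complete parallel with the way Theorem \ref{theorem: lambda is greater than, l1 case} was obtained for $\ell_1(\mathbb{N})$. First, I will verify that every structural hypothesis of Theorem \ref{theorem: lambda is greater than} holds in the present setting. By Propositions \ref{prop: measure space delta dual ii to C0} and \ref{prop: adjoint RKBS of measure RKBS}, the spaces $\mathcal{B}_K$ and $C_0(X')$ form a pair of RKBSs with reproducing kernel $K$, and $C_0(X')$ serves as a pre-dual of $\mathcal{B}_K$. Under Assumption (A3), Lemma \ref{lemma: A1-A2-BK} supplies Assumption (A1) with $X_g':=\mathcal{N}(g)$ for every $g\in\mathcal{V}$ and Assumption (A2) with $C:=1$. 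The regularizer in \eqref{eq: regularization problem RKBS B measure M(X)} is $\varphi(t)=t$, which is convex, differentiable, and strictly increasing with $\varphi'(t)=1$, so it meets the analytic requirements of Theorem \ref{theorem: lambda is greater than}.

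Next, I will translate the data of the current statement into the ingredients of Theorem \ref{theorem: lambda is greater than}. Exactly as in the proof of Theorem \ref{theorem: MNI representer for RKBS B measure M(X)}, Lemma \ref{lemma: extreme points measure space} combined with the Krein--Milman theorem yields $\partial\|\cdot\|_\infty(\hat g)=\overline{\mathrm{co}}(\Omega(\hat g))$, and Assumption (A3) makes $\Omega(\hat g)$ a finite set, so the closure is superfluous and $\partial\|\cdot\|_\infty(\hat g)=\mathrm{co}(\Omega(\hat g))$. Consequently, the hypothesis \eqref{non empty set: measure case} with $\mathbf{y}:=\hat{\mathbf{z}}$ is equivalent to \eqref{Non-empty-set} with $\mathbf{y}:=\hat{\mathbf{z}}$. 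Moreover, the reproducing property \eqref{reproducing property in B'} gives
\[
\langle K(x_i,\cdot),K(\cdot,x_j')\rangle_{\mathcal{B}_K}=K(x_i,x_j'),\quad i\in\mathbb{N}_n,\ j\in\mathbb{N}_{n(\hat g)},
\]
so the matrix $\mathbf{L}_{\hat g}$ of \eqref{finite matrix V hat nu} coincides with the kernel matrix $\mathbf{V}_{\hat g}$ of \eqref{def: matrix V g}, while $X'_{\hat g}$ is just $\mathcal{N}(\hat g)$.

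Having matched all of the data, I will invoke Theorem \ref{theorem: lambda is greater than} directly. It asserts that $\hat f=\sum_{i\in\mathbb{N}_l}\hat{\alpha}_{k_i}K(\cdot,x_{k_i})$ with the stated nonzero coefficients and nodes in $\mathcal{N}(\hat g)$ is a solution of \eqref{eq: regularization problem RKBS B measure M(X)} with $\mathbf{y}:=\mathbf{y}_0$ if and only if there exists $\mathbf{a}\in\partial\mathcal{Q}_{\mathbf{y}_0}(\mathbf{V}_{\hat g}\hat{\bm{\alpha}})$ for $\hat{\bm{\alpha}}:=\sum_{i\in\mathbb{N}_l}\hat{\alpha}_{k_i}\mathbf{e}_{k_i}$ such that conditions \eqref{lambda is greater than 1} and \eqref{lambda is greater than 2} hold with $\mathbf{L}_{\hat\nu}$ replaced by $\mathbf{V}_{\hat g}$. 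Substituting $\varphi'(C\|\hat{\bm{\alpha}}\|_1)=1$ and $C=1$ collapses the prefactor $C\varphi'(C\|\hat{\bm{\alpha}}\|_1)$ to $1$, and the pair of conditions \eqref{lambda is greater than 1}--\eqref{lambda is greater than 2} simplifies precisely to \eqref{lambda is greater than measure case}.

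No essential difficulty is expected: the proof is a bookkeeping translation between the abstract framework of Section 4 and the measure-space RKBS $\mathcal{B}_K$. The only mildly delicate point is justifying the equivalence between the hypothesis \eqref{non empty set: measure case} and the abstract non-emptiness condition \eqref{Non-empty-set}; that equivalence rests on the characterization $\partial\|\cdot\|_\infty(\hat g)=\mathrm{co}(\Omega(\hat g))$, whose finite-sum form is exactly where Assumption (A3) is used.
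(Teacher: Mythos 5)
Your proposal is correct and follows essentially the same route as the paper: it specializes Theorem \ref{theorem: lambda is greater than} to $\mathcal{B}_K$ by verifying the RKBS-pair and pre-dual structure via Propositions \ref{prop: measure space delta dual ii to C0} and \ref{prop: adjoint RKBS of measure RKBS}, obtaining Assumptions (A1)--(A2) from Lemma \ref{lemma: A1-A2-BK}, showing \eqref{non empty set: measure case} yields \eqref{Non-empty-set} through $\partial\|\cdot\|_\infty(\hat g)=\mathrm{co}(\Omega(\hat g))$, identifying $\mathbf{L}_{\hat g}=\mathbf{V}_{\hat g}$, and substituting $C=1$, $\varphi'\equiv 1$. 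Your write-up is, if anything, slightly more explicit than the paper's, which refers to the proof of Theorem \ref{theorem: MNI representer for RKBS B measure M(X)} for the equivalence of the non-emptiness conditions rather than repeating the Krein--Milman argument.
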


\begin{proof}
We prove this theorem by specializing Theorem \ref{theorem: lambda is greater than} to problem \eqref{eq: regularization problem RKBS B measure M(X)}. Once again, as has been shown in Propositions \ref{prop: measure space delta dual ii to C0} and \ref{prop: adjoint RKBS of measure RKBS}, $\mathcal{B}_K$ and $C_0(X')$ are a pair of RKBSs with $K$ being the reproducing kernel of $\mathcal{B}_K$ and moreover, $\mathcal{B}_K$ takes $C_0(X')$ as its pre-dual.
Moreover, Assumption (A3) ensures that Assumption (A1) holds with $X_g':=\mathcal{N}(g)$ for any $g\in\mathcal{V}$ and Assumption (A2) holds with $C:=1$. As in the proof of Theorem \ref{theorem: MNI representer for RKBS B measure M(X)}, one can see that assumption \eqref{non empty set: measure case} ensures that $\hat g$ satisfies \eqref{Non-empty-set} with $\mathbf{y}:=\hat{\mathbf{z}}$.
We also notice that $\varphi(t):=t$, $t\in\mathbb{R}_{+}$ in the regularization problem \eqref{eq: regularization problem RKBS B measure M(X)}, is continuous, convex and strictly increasing. The hypotheses of Theorem \ref{theorem: lambda is greater than} are all fulfilled. Therefore, by Theorem \ref{theorem: lambda is greater than} with $\mathbf{L}_{\hat{g}}:=\mathbf{V}_{\hat{g}}$ and $X_{\hat g}':=\mathcal{N}(\hat g)$, problem \eqref{eq: regularization problem RKBS B measure M(X)} with $\mathbf{y}:=\mathbf{y}_0$ has a solution  $\hat{f}=\sum_{i\in\mathbb{N}_{l}}\hat{\alpha}_{k_i} K(\cdot,x_{k_i})$ with $\hat{\alpha}_{k_i}\in\mathbb{R}\setminus\{0\}$,  $x_{k_i}\in\mathcal{N}(\hat g)$, $i\in\mathbb{N}_{l}$ for some $l\in\mathbb{N}_{n(\hat{g})}$ if and only if there exists 
$\mathbf{a}\in\partial \mathcal{Q}_{\mathbf{y}}(\mathbf{V}_{\hat{g}}\hat{\bm{\alpha}})$ with $\hat{\bm{\alpha}}:=\sum_{i\in\mathbb{N}_{l}}\hat{\alpha}_{k_i}\mathbf{e}_{k_i}\in\Omega_l$ such that       
\begin{align}
\lambda&=-(\mathbf{V}_{\hat{g}}^\top\mathbf{a})_{k_i}\mathrm{sign}(\hat\alpha_{k_i})/(\varphi'(C\|\hat{\bm{\alpha}}\|_1)),  \ \ i\in\mathbb{N}_l,\label{lambda is greater than 11 measure case}\\
\lambda&\geq |(\mathbf{V}_{\hat{g}}^\top\mathbf{a})_{j}|/(C\varphi'(C\|\hat{\bm{\alpha}}\|_1)), \ \ j\in\mathbb{N}_{n(\hat{g})}\backslash\{k_i:i\in\mathbb{N}_l\}.\label{lambda is greater than 22 measure case}
\end{align}
Substituting $\varphi'(t)=1$, $t\in\mathbb{R}_+$ and $C=1$ into \eqref{lambda is greater than 11 measure case} and \eqref{lambda is greater than 22 measure case} yields the desired result \eqref{lambda is greater than measure case}. 
\end{proof}

\section{Conclusion}
 
We have studied attributes of RKBSs that can promote sparsity of learning solutions in the spaces. We have proposed sufficient conditions on RKBSs which give rise to explicit and data-dependent sparse representer theorems for solutions of the MNI problem and the regularization problem in the spaces. Following the established general sparse representer theorems, we have shown that two specific RKBSs, the sequence space $\ell_1(\mathbb{N})$ and the measure space, have sparse representations for solutions of the MNI problem and the regularization problem in these spaces.

\bigskip

\noindent{\bf Acknowledgment:}
R. Wang is supported in part by the Natural Science Foundation of China under grant 12171202 and by the National Key Research and Development Program of China (grants no. 2020YFA0714101 and 2020YFA0713600); Y. Xu is supported in part by the US National Science Foundation under grants DMS-1912958 and DMS-2208386, and by the US National Institutes of Health under grant R21CA263876. Send all correspondence to Y. Xu.

\appendix
\section*{Appendix A. Explicit Representer Theorems for MNI in Banach Spaces}
\label{app:theorem}

An explicit representer theorem for the solutions of the MNI problem in a general Banach space having a pre-dual space is stated in Proposition \ref{theorem: representer for MNI} of Section \ref{sec: sparse representer theorem of MNI}. In this appendix, we give a complete proof for the proposition. 

We first present several useful properties of the solution set $\rm{S}(\mathbf{y})$ of problem \eqref{MNI} with $\mathbf{y}\in\mathbb{R}^n$. 
 For this purpose, we recall two well-known results. The generalized Weierstrass Theorem (\cite{kurdila2006convex}) shows that if $\mathcal{X}$ is a compact topological space and a functional $\mathcal{T}: \mathcal{X}\rightarrow\mathbb{R}$ is lower semi-continuous, then there exists $f_{0} \in \mathcal{X}$ such that
$$
\mathcal{T}f_{0}=\inf\{\mathcal{T}(f): f \in \mathcal{X}\}.
$$ 
A consequence of the Banach-Alaoglu Theorem (\cite{megginson2012introduction}) ensures that any bounded and weakly${}^*$ closed subset of $\mathcal{B}^{*}$ is weakly${}^*$ compact. 

\begin{lemma}\label{lemma: solution set of MNI}
    If the Banach space $\mathcal{B}$ has a pre-dual space $\mathcal{B}_*$ and $\nu_j\in \mathcal{B}_*$, $j\in\mathbb{N}_n$, are linearly independent, then for any ${\mathbf{y}}\in\mathbb{R}^n$, the solution set $\rm{S}(\mathbf{y})$ of problem \eqref{MNI} is nonempty, convex and weakly${}^*$ compact. 
\end{lemma}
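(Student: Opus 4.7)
The plan is to verify the three required properties of $\rm{S}(\mathbf{y})$ in turn by exploiting the pre-dual structure on $\mathcal{B}$ via the Banach--Alaoglu theorem and the weak${}^*$ lower semi-continuity of the norm $\|\cdot\|_{\mathcal{B}}$. Throughout, I would rely on three standard consequences of $\mathcal{B}=(\mathcal{B}_*)^*$: (i) each functional $\langle\nu,\cdot\rangle_{\mathcal{B}}$ with $\nu\in\mathcal{B}_*$ is weakly${}^*$ continuous; (ii) every closed norm ball of $\mathcal{B}$ is weakly${}^*$ compact; and (iii) the norm $\|\cdot\|_{\mathcal{B}}$ equals the pointwise supremum $\sup\{\langle f,g\rangle_{\mathcal{B}_*}:g\in\mathcal{B}_*,\ \|g\|_{\mathcal{B}_*}\le 1\}$ of weakly${}^*$ continuous functionals, and is therefore weakly${}^*$ lower semi-continuous.

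To establish nonemptiness, I would first show $\mathcal{M}_{\mathbf{y}}\neq\emptyset$ for every $\mathbf{y}\in\mathbb{R}^n$. Since $\nu_1,\ldots,\nu_n\in\mathcal{B}_*\subset\mathcal{B}^*$ are linearly independent, the image $\mathcal{L}(\mathcal{B})\subset\mathbb{R}^n$ must equal $\mathbb{R}^n$: otherwise there would exist $\mathbf{c}\in\mathbb{R}^n\setminus\{\mathbf{0}\}$ with $\sum_{j\in\mathbb{N}_n}c_j\langle\nu_j,f\rangle_{\mathcal{B}}=0$ for all $f\in\mathcal{B}$, i.e.\ $\sum_{j\in\mathbb{N}_n}c_j\nu_j=0$, contradicting linear independence. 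Next, setting $m:=\inf\{\|f\|_{\mathcal{B}}:f\in\mathcal{M}_{\mathbf{y}}\}$ and fixing any $K>m$, I would observe that the closed ball $K B_0:=\{f\in\mathcal{B}:\|f\|_{\mathcal{B}}\le K\}$ is weakly${}^*$ compact by (ii), while $\mathcal{M}_{\mathbf{y}}=\bigcap_{j\in\mathbb{N}_n}\{f\in\mathcal{B}:\langle\nu_j,f\rangle_{\mathcal{B}}=y_j\}$ is weakly${}^*$ closed by (i). Hence $\mathcal{M}_{\mathbf{y}}\cap K B_0$ is a nonempty weakly${}^*$ compact set on which the weakly${}^*$ lower semi-continuous functional $\|\cdot\|_{\mathcal{B}}$ attains its infimum by the generalized Weierstrass theorem, producing a minimizer that lies in $\rm{S}(\mathbf{y})$.

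Convexity follows at once: $\mathcal{M}_{\mathbf{y}}$ is affine, hence convex, and for $f_1,f_2\in\rm{S}(\mathbf{y})$ and $t\in[0,1]$ the triangle inequality gives $\|tf_1+(1-t)f_2\|_{\mathcal{B}}\le tm+(1-t)m=m$, so $tf_1+(1-t)f_2\in\rm{S}(\mathbf{y})$. For weak${}^*$ compactness I would simply write $\rm{S}(\mathbf{y})=\mathcal{M}_{\mathbf{y}}\cap m B_0$; this is the intersection of two weakly${}^*$ closed sets contained in the weakly${}^*$ compact set $mB_0$, hence is itself weakly${}^*$ compact.

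The only delicate points are the weak${}^*$ lower semi-continuity of the norm and the weak${}^*$ closedness of $\mathcal{M}_{\mathbf{y}}$; both depend critically on the hypothesis $\nu_j\in\mathcal{B}_*$ rather than the weaker $\nu_j\in\mathcal{B}^*$, so the use of the pre-dual must be invoked carefully. Everything else reduces to a routine application of Banach--Alaoglu together with the generalized Weierstrass theorem.
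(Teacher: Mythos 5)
Your proposal is correct and follows essentially the same route as the paper: weak${}^*$ lower semi-continuity of the norm plus the Banach--Alaoglu and generalized Weierstrass theorems for existence, the triangle inequality for convexity, and bounded-plus-weakly${}^*$-closed for compactness. The minor differences are refinements rather than a new approach: you explicitly prove $\mathcal{M}_{\mathbf{y}}\neq\emptyset$ via surjectivity of $\mathcal{L}$ (which the paper only asserts), you obtain weak${}^*$ closedness of $\mathcal{M}_{\mathbf{y}}$ directly from weak${}^*$ continuity of the $\nu_j$ rather than the paper's sequential argument, and you package compactness via the clean identity $\mathrm{S}(\mathbf{y})=\mathcal{M}_{\mathbf{y}}\cap mB_0$.
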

\begin{proof}
    We first show that $\rm{S}(\mathbf{y})$ is nonempty. Note that the linear independence of $\nu_j$, $j\in\mathbb{N}_n$, guarantees that $\mathcal{M}_{\mathbf{y}}$ is nonempty. Pick $g\in \mathcal{M}_{\mathbf{y}}$ and set $r:=\|g\|_\mathcal{B}+1$. It follows that ${B_r}\cap \mathcal{M}_{\mathbf{y}}\neq\emptyset$, where ${B_r}$ denotes the closed ball centered at the origin with radius $r$ under the norm $\|\cdot\|_\mathcal{B}$. Accordingly, we rewrite the MNI problem \eqref{MNI} as
    \begin{equation}\label{MNI_new}
    \inf\left\{\|f\|_\mathcal{B}:f\in {B_r}\cap \mathcal{M}_{\mathbf{y}}\right\}.
    \end{equation}
    We prove that problem \eqref{MNI_new} has at least one solution by using the generalized Weierstrass Theorem. Note that the norm $\|\cdot\|_{\mathcal{B}}$ is weakly$^{*}$ lower semi-continuous on $\mathcal{B}$. It suffices to verify that ${B_r}\cap \mathcal{M}_{\mathbf{y}}$ is weakly${}^*$ compact. We first claim that ${B_r}\cap \mathcal{M}_{\mathbf{y}}$ is weakly${}^*$ closed. Indeed, suppose that the sequence $f_m$, $m\in\mathbb{N}$, in ${B_r}\cap \mathcal{M}_{\mathbf{y}}$ converges  weakly${}^*$ to $f$. We then get that
    $\lim _{m \rightarrow+\infty} \left\langle f_{m}, \nu_j\right\rangle_{\mathcal{B}_*} =\langle f,\nu_j\rangle_{\mathcal{B}_*},\ \mbox{for all}\ j\in\mathbb{N},$
    which together with equation \eqref{natural-map-predual} leads to 
    $\lim _{m \rightarrow+\infty} \left\langle\nu_j, f_{m} \right\rangle_{\mathcal{B}} =\langle\nu_j, f\rangle_{\mathcal{B}},\ \mbox{for all}\ j\in\mathbb{N}.$ 
    Noting that $\left\langle\nu_j, f_{m} \right\rangle_{\mathcal{B}}=y_j$, for all $m\in\mathbb{N}$ and all $j\in\mathbb{N}_n$, the above equation yields that $\langle\nu_j, f\rangle_{\mathcal{B}}=y_j$, for all $j\in\mathbb{N}_n$. That is to say, $f\in\mathcal{M}_{\mathbf{y}}$.
	According to the weakly$^{*}$ lower semi-continuity of the norm
	$\|\cdot\|_{\mathcal{B}}$ on $\mathcal{B}$, we have that
\begin{equation}\label{x in Br}
\|f\|_\mathcal{B}\leq\liminf_m\|f_m\|_\mathcal{B}\leq r.
\end{equation}
We get the conclusion that $f\in {B_r}\cap \mathcal{M}_{\mathbf{y}}$ and hence ${B_r}\cap \mathcal{M}_{\mathbf{y}}$ is weakly${}^*$ closed. Obviously, ${B_r}\cap {\mathcal{M}}_{\mathbf{y}}$ is bounded in $\mathcal{B}$. Consequently, the Banach-Alaoglu Theorem guarantees that  ${B_r}\cap \mathcal{M}_{\mathbf{y}}$ is weakly${ }^*$ compact. By virtue of the generalized Weierstrass theorem, there exists an $f_0\in {B_r}\cap \mathcal{M}_{\mathbf{y}}$ such that $\|f_0\|_\mathcal{B}=\inf\left\{\|f\|_\mathcal{B}:f\in {B_r}\cap \mathcal{M}_{\mathbf{y}}\right\}.$
    That is, $f_0\in\rm{S}(\mathbf{y})$.
    
    We next verify that $\rm{S}(\mathbf{y})$ is convex. It is clear that for any $f_1,f_2\in\rm{S}(\mathbf{y})$ and any $\theta\in[0,1]$, there holds $\theta f_1+(1-\theta) f_2\in\mathcal{M}_{\mathbf{y}}$. Moreover, we also have that 	\begin{equation*}
	    \left\|\theta f_1+(1-\theta)f_2\right\|_{\mathcal{B}}\leq \theta\|f_1\|_{\mathcal{B}}+(1-\theta)\|f_2\|_\mathcal{B},
	\end{equation*}
	which together with $f_1$ and $f_2$ both attaining the infimum in \eqref{MNI} yields that $\theta f_1+(1-\theta)f_2$ is also a solution of \eqref{MNI}. That is, $\theta f_1+(1-\theta)f_2\in\rm{S}(\mathbf{y})$. Thus,  $\rm{S}(\mathbf{y})$ is a convex set. 
 
 We finally prove that $\rm{S}(\mathbf{y})$ is weakly${ }^*$ compact. Again by the Banach-Alaoglu theorem, it suffices to prove that $\rm{S}(\mathbf{y})$ is bounded and weakly${ }^*$ closed. Since all the elements in $\rm{S}(\mathbf{y})$ have the infimum in \eqref{MNI} as their norms, the set $\rm{S}(\mathbf{y})$ is obviously bounded. Suppose that the sequence $f_m$, $m\in\mathbb{N}$, in $\rm{S}(\mathbf{y})$ converges weakly${}^*$ to $f$. As pointed out earlier, we have that $f\in\mathcal{M}_{\mathbf{y}}$. It follows from the weakly$^{*}$ lower semi-continuity of
	$\|\cdot\|_{\mathcal{B}}$ that  $\|f\|_{\mathcal{B}}\leq\liminf_m\|f_m\|_\mathcal{B}=\|f_0\|_{\mathcal{B}}$. Note that $\|f\|_{\mathcal{B}}=\|f_0\|_{\mathcal{B}}$ otherwise it will contradict to the fact that $f_0$ is a solution. Consequently, we conclude that $f\in\rm{S}(\mathbf{y})$ and thus $\rm{S}(\mathbf{y})$ is weakly${ }^*$ closed. %
 \end{proof}
 
We next express the solution set $\rm{S}(\mathbf{y})$ by using the elements in the set $\mathcal{V}$ defined by \eqref{V_span}. For any $f\in\mathcal{B}$, we introduce a subset of $\mathcal{V}$ by
\begin{equation}\label{def: J(hat f)}
    \mathcal{J}( f):=\left\{\nu\in\mathcal{V}: f\in\|\nu\|_{\mathcal{B}_*}\partial\|\cdot\|_{\mathcal{B}_*}(\nu)\right\}.
\end{equation}
Theorem 12 in \cite{wang2021representer} guarantees that for any solution $\hat f$ of problem \eqref{MNI}, the set $\mathcal{J}(\hat f)$ is nonempty. The next lemma shows that the sets defined by \eqref{def: J(hat f)} associated with different solutions of problem \eqref{MNI} are exactly the same. 

\begin{lemma}\label{prop: duality mappings are same}
If $\mathcal{B}$ is a Banach space having a pre-dual space $\mathcal{B}_{*}$ and $\nu_{j} \in \mathcal{B}_{*}$, $j \in \mathbb{N}_{n}$, are linearly independent, then for any two solutions $\hat f,\hat g$ of the MNI problem \eqref{MNI} with $\mathbf{y} \in \mathbb{R}^{n}\backslash\{\mathbf{0}\}$, there holds $ \mathcal{J}(\hat f)=\mathcal{J}(\hat g).   $
\end{lemma}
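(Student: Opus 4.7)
The plan is to show that membership of $\nu$ in $\mathcal{J}(f)$ can be characterized purely in terms of two quantities that are common to all MNI solutions, namely $\|f\|_{\mathcal{B}}$ and the values $\langle\nu_j,f\rangle_{\mathcal{B}}=y_j$, $j\in\mathbb{N}_n$. Once that is done the equality $\mathcal{J}(\hat f)=\mathcal{J}(\hat g)$ is immediate.

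First I would unpack the condition $f\in\|\nu\|_{\mathcal{B}_*}\partial\|\cdot\|_{\mathcal{B}_*}(\nu)$ using property \eqref{subdifferential = norming functional} applied with $\mathcal{B}$ replaced by $\mathcal{B}_*$ (recall that $(\mathcal{B}_*)^*=\mathcal{B}$, so elements of $\partial\|\cdot\|_{\mathcal{B}_*}(\nu)$ live in $\mathcal{B}$). For $\nu\neq 0$ this yields
\[
\partial\|\cdot\|_{\mathcal{B}_*}(\nu)=\bigl\{u\in\mathcal{B}:\|u\|_{\mathcal{B}}=1,\ \langle u,\nu\rangle_{\mathcal{B}_*}=\|\nu\|_{\mathcal{B}_*}\bigr\},
\]
which via \eqref{natural-map-predual} becomes $\langle\nu,u\rangle_{\mathcal{B}}=\|\nu\|_{\mathcal{B}_*}$. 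Writing $f=\|\nu\|_{\mathcal{B}_*}u$ and substituting, I would conclude that, for $\nu\in\mathcal{V}\setminus\{0\}$,
\[
\nu\in\mathcal{J}(f)\quad\Longleftrightarrow\quad \|f\|_{\mathcal{B}}=\|\nu\|_{\mathcal{B}_*}\ \text{ and }\ \langle\nu,f\rangle_{\mathcal{B}}=\|\nu\|_{\mathcal{B}_*}^{2}.
\]
The case $\nu=0$ is trivial: $0\in\mathcal{J}(f)$ iff $f=0$, and since $\mathbf{y}\neq\mathbf{0}$ no MNI solution is zero, so $0$ lies in neither $\mathcal{J}(\hat f)$ nor $\mathcal{J}(\hat g)$.

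Next I would observe the two invariances of the right-hand side across MNI solutions. By definition of the MNI problem all solutions share the same norm: $\|\hat f\|_{\mathcal{B}}=\|\hat g\|_{\mathcal{B}}$, namely the infimum in \eqref{MNI}. Moreover every $\nu\in\mathcal{V}$ can be written $\nu=\sum_{j\in\mathbb{N}_n}c_j\nu_j$, and since $\mathcal{L}(\hat f)=\mathcal{L}(\hat g)=\mathbf{y}$ we get
\[
\langle\nu,\hat f\rangle_{\mathcal{B}}=\sum_{j\in\mathbb{N}_n}c_j y_j=\langle\nu,\hat g\rangle_{\mathcal{B}}.
\]
Therefore the two conditions $\|f\|_{\mathcal{B}}=\|\nu\|_{\mathcal{B}_*}$ and $\langle\nu,f\rangle_{\mathcal{B}}=\|\nu\|_{\mathcal{B}_*}^{2}$ hold for $f=\hat f$ if and only if they hold for $f=\hat g$, which gives $\mathcal{J}(\hat f)=\mathcal{J}(\hat g)$.

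I do not foresee a serious obstacle: the whole argument is a direct unfolding of the norming functional characterization \eqref{subdifferential = norming functional}, together with the two elementary invariants (common norm and common $\mathcal{L}$-image) that every pair of MNI solutions satisfies. The only point requiring mild care is the duality convention, i.e.\ making sure the bracket $\langle\cdot,\cdot\rangle_{\mathcal{B}_*}$ appearing in $\partial\|\cdot\|_{\mathcal{B}_*}(\nu)$ is correctly identified with $\langle\nu,\cdot\rangle_{\mathcal{B}}$ via the embedding \eqref{natural-map-predual}.
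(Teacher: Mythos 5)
Your proposal is correct and is essentially the paper's own argument: the paper likewise unpacks $f\in\|\nu\|_{\mathcal{B}_*}\partial\|\cdot\|_{\mathcal{B}_*}(\nu)$ via the norming-functional characterization \eqref{subdifferential = norming functional} (with $\mathcal{B}$ replaced by $\mathcal{B}_*$ and the identification \eqref{natural-map-predual}) and then uses the two invariants shared by all MNI solutions, the common norm and the common values $\langle\nu_j,\cdot\rangle_{\mathcal{B}}=y_j$. The only cosmetic difference is that you package this as an ``if and only if'' characterization of $\mathcal{J}(f)$ (and treat $\nu=0$ explicitly), whereas the paper proves one inclusion and invokes symmetry.
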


\begin{proof}
    It suffices to show that $\mathcal{J}(\hat f)\subseteq\mathcal{J}(\hat g)$. Suppose that $\mu\in\mathcal{J}(\hat f)$. That is,  $\mu\in\mathcal{V}$ and $\hat f\in\|\mu\|_{\mathcal{B}_*}\partial\|\cdot\|_{\mathcal{B}_*}(\mu)$. It is clear that $\mu\neq 0$ since $\hat f\neq 0$. By equation \eqref{subdifferential = norming functional} with $\mathcal{B}$ being replaced by $\mathcal{B}_*$ and equation \eqref{natural-map-predual}, we have that
    \begin{equation}\label{hat f B norm equals to u B_*}
        \|\hat f\|_{\mathcal{B}}=\|\mu\|_{\mathcal{B}_*}\quad\mathrm{and}\quad\langle\mu, \hat f \rangle_{\mathcal{B}}=\|\hat f\|_{\mathcal{B}}\|\mu\|_{\mathcal{B}_*}.
    \end{equation}
    Since both $\hat f$ and $\hat g$ are solutions of the MNI problem \eqref{MNI} with $\mathbf{y}$, it holds that $\|\hat f\|_{\mathcal{B}}=\|\hat g\|_{\mathcal{B}}$ and 
    $\langle\nu_j, \hat f\rangle_{\mathcal{B}}=\langle\nu_j, \hat g\rangle_{\mathcal{B}}, \ \mbox{for all}\ j\in\mathbb{N}_n,$
    which together with $\mu\in\mathcal{V}$ lead to 
    \begin{equation}\label{relation hat f hat g}
     \|\hat f\|_{\mathcal{B}}=\|\hat g\|_{\mathcal{B}}\quad \mbox{and}\quad \langle\mu, \hat f\rangle_{\mathcal{B}}=\langle\mu, \hat g\rangle_{\mathcal{B}}.
     \end{equation}
     Combining equations \eqref{hat f B norm equals to u B_*} with \eqref{relation hat f hat g}, we obtain that 
    $\|\hat g\|_{\mathcal{B}}=\|\mu\|_{\mathcal{B}_*}$ and $ \langle\mu, \hat g\rangle_{\mathcal{B}}=\|\hat g\|_{\mathcal{B}}\|\mu\|_{\mathcal{B}_*}.$
    This implies $\mu\in\mathcal{J}(\hat g)$. Consequently, we conclude that $\mathcal{J}(\hat f)\subseteq\mathcal{J}(\hat g)$.
\end{proof}

As we have seen in Lemma \ref{prop: duality mappings are same}, the set $\mathcal{J}(\hat f)$ is independent of the choice of the solution $\hat f$. This result allows us to express the solution set $\rm{S}(\mathbf{y})$ by using any fixed $\hat\nu\in\mathcal{J}(\hat f)$. 

\begin{proposition}\label{prop: representation_solution_set}
Suppose that $\mathcal{B}$ is a Banach space having a pre-dual space $\mathcal{B}_{*}$, $\nu_{j} \in \mathcal{B}_{*}$, $j \in \mathbb{N}_{n}$, are linearly independent and $\mathbf{y}\in \mathbb{R}^{n}\backslash\{\mathbf{0}\}$. If $\mathcal{V}$ and $\mathcal{M}_{\mathbf{y}}$ are defined by \eqref{V_span} and \eqref{hyperplane My}, respectively, and $\hat\nu\in\mathcal{V}$ satisfies \eqref{Non-empty-set}, then 
\begin{equation}\label{represent SMNI cap 2}
\rm{S}(\mathbf{y})=(\|\hat\nu\|_{\mathcal{B}_*}\partial\|\cdot\|_{\mathcal{B}_*}(\hat\nu))\cap\mathcal{M}_{\mathbf{y}}.
\end{equation}  
\end{proposition}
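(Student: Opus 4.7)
The plan is to establish the set equality \eqref{represent SMNI cap 2} by verifying both inclusions, each of which reduces to a result already at our disposal.

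For the inclusion $(\|\hat\nu\|_{\mathcal{B}_*}\partial\|\cdot\|_{\mathcal{B}_*}(\hat\nu))\cap\mathcal{M}_{\mathbf{y}}\subseteq \rm{S}(\mathbf{y})$, I would pick any $f$ in the left-hand side. Since $\hat\nu\in\mathcal{V}$ and $f$ lies in $\|\hat\nu\|_{\mathcal{B}_*}\partial\|\cdot\|_{\mathcal{B}_*}(\hat\nu)$, this is precisely the membership described in inclusion \eqref{f hat in dualality mapping}. Together with $f\in\mathcal{M}_{\mathbf{y}}$, the ``if'' direction of Theorem 12 of \cite{wang2021representer} (quoted in the excerpt) gives $f\in\rm{S}(\mathbf{y})$, so this direction is immediate.

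For the reverse inclusion $\rm{S}(\mathbf{y})\subseteq(\|\hat\nu\|_{\mathcal{B}_*}\partial\|\cdot\|_{\mathcal{B}_*}(\hat\nu))\cap\mathcal{M}_{\mathbf{y}}$, I would exploit the nonemptiness hypothesis \eqref{Non-empty-set}: fix some $\hat f$ in $(\|\hat\nu\|_{\mathcal{B}_*}\partial\|\cdot\|_{\mathcal{B}_*}(\hat\nu))\cap\mathcal{M}_{\mathbf{y}}$. By the inclusion just established, $\hat f\in\rm{S}(\mathbf{y})$, and by definition \eqref{def: J(hat f)} of the set $\mathcal{J}(\hat f)$, we have $\hat\nu\in\mathcal{J}(\hat f)$. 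Now pick any $f\in\rm{S}(\mathbf{y})$; clearly $f\in\mathcal{M}_{\mathbf{y}}$. Since $\mathbf{y}\neq\mathbf{0}$ forces every solution to be nonzero (otherwise $\mathcal{L}(f)=\mathbf{0}\neq\mathbf{y}$), Lemma \ref{prop: duality mappings are same} applies and yields $\mathcal{J}(f)=\mathcal{J}(\hat f)$. Hence $\hat\nu\in\mathcal{J}(f)$, which by \eqref{def: J(hat f)} means $f\in\|\hat\nu\|_{\mathcal{B}_*}\partial\|\cdot\|_{\mathcal{B}_*}(\hat\nu)$, completing this inclusion.

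There is no serious obstacle: the work has already been done in Theorem 12 of \cite{wang2021representer} and in Lemma \ref{prop: duality mappings are same}. The only point requiring a small check is that the hypothesis $\mathbf{y}\in\mathbb{R}^n\setminus\{\mathbf{0}\}$ is precisely what legitimizes the application of Lemma \ref{prop: duality mappings are same}, since that lemma was proved under the nonzero-data assumption; without this, one could not conclude that $\mathcal{J}$ is constant across $\rm{S}(\mathbf{y})$, and the reverse inclusion would fail.
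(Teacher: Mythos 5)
Your proposal is correct and follows essentially the same route as the paper's proof: the forward inclusion via Theorem 12 of \cite{wang2021representer}, and the reverse inclusion by fixing an element of the nonempty set in \eqref{Non-empty-set}, noting it is a solution, and invoking Lemma \ref{prop: duality mappings are same} (which indeed needs $\mathbf{y}\neq\mathbf{0}$) to transfer $\hat\nu\in\mathcal{J}(\cdot)$ to an arbitrary solution. No gaps.
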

\begin{proof}
  We first assume that $\hat f\in(\|\hat\nu\|_{\mathcal{B}_*}\partial\|\cdot\|_{\mathcal{B}_*}(\hat\nu))\cap\mathcal{M}_{\mathbf{y}}$. That is, $\hat{f} \in \mathcal{M}_{\mathbf{y}}$ and there exists $\hat{\nu}\in\mathcal{V}$ satisfying the inclusion relation
  \eqref{f hat in dualality mapping}. Theorem 12 in \cite{wang2021representer} ensures that $\hat{f}\in\rm{S}(\mathbf{y})$. Hence, we have that $(\|\hat\nu\|_{\mathcal{B}_*}\partial\|\cdot\|_{\mathcal{B}_*}(\hat\nu))\cap\mathcal{M}_{\mathbf{y}}\subseteq\rm{S}(\mathbf{y}).$ Conversely, assume that $\hat{f}\in\rm{S}(\mathbf{y})$. We choose $\hat g\in(\|\hat\nu\|_{\mathcal{B}_*}\partial\|\cdot\|_{\mathcal{B}_*}(\hat\nu))\cap\mathcal{M}_{\mathbf{y}}$. Similar arguments show that $\hat g$ is also a solution of problem \eqref{MNI} with $\mathbf{y}$ and $\hat\nu\in\mathcal{J}(\hat g).$ With the help of Lemma \ref{prop: duality mappings are same}, we obtain that $\hat\nu\in\mathcal{J}(\hat f).$ That is, $\hat f\in\|\hat\nu\|_{\mathcal{B}_*}\partial\|\cdot\|_{\mathcal{B}_*}(\hat\nu)$, which together with $\hat f\in\mathcal{M}_{\mathbf{y}}$ leads further to $\hat f\in(\|\hat\nu\|_{\mathcal{B}_*}\partial\|\cdot\|_{\mathcal{B}_*}(\hat\nu))\cap\mathcal{M}_{\mathbf{y}}$. We thus get that $\rm{S}(\mathbf{y})\subseteq(\|\hat\nu\|_{\mathcal{B}_*}\partial\|\cdot\|_{\mathcal{B}_*}(\hat\nu))\cap\mathcal{M}_{\mathbf{y}},$ which completes the proof of the desired result.
\end{proof}

We will characterize the extreme points of $\rm{S}(\mathbf{y})$ by employing a well-known result about the extreme points. \cite{dubins1962extreme}  concerns a characterization of the extreme points of a subset, which is the intersection of
a bounded, closed and convex subset with a finite number of  hyperplanes. 
Specifically, let $\mathcal{X}$ be a topological vector space over the field of real numbers. Suppose that $A$ is a bounded, closed and convex subset of $\mathcal{X}$. \cite{dubins1962extreme} proved that every extreme point of the intersection of $A$ with $n$ hyperplanes is a convex combination of at most $n+1$ extreme points of $A$. %
Below, we give a technical lemma.

\begin{lemma}\label{linear_dependent}
    Suppose that $\mathcal{B}$ is a Banach space having a pre-dual space $\mathcal{B}_{*}$ and $\nu_{j} \in \mathcal{B}_{*}$, $j \in \mathbb{N}_{n}$, are linearly independent. Let $\mathcal{L}$ and $\mathcal{V}$ be defined by \eqref{operator L} and \eqref{hyperplane My}, respectively. If $\nu\in \mathcal{V}\backslash\{0\}$ and $w_j\in\partial\|\cdot\|_{\mathcal{B}_*}(\nu)$, $j\in\mathbb{N}_{n+1}$, then there exist $a_j\in\mathbb{R}$, $j\in\mathbb{N}_{n+1}$, not all zero, such that  
    $\sum_{j\in\mathbb{N}_{n+1}}a_j\mathcal{L}(w_{j})=\mathbf{0}$ and $\sum_{j\in\mathbb{N}_{n+1}}a_j=0.$
\end{lemma}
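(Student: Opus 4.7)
The plan is to convert the problem into a statement about affine dependence of the image vectors $\mathcal{L}(w_j)$ in $\mathbb{R}^n$, and exploit the fact that the subdifferential relation at $\nu$ forces all these vectors to satisfy one common affine constraint.

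First I would unpack $\nu$. Since $\nu \in \mathcal{V}\setminus\{0\}$ and the $\nu_i$ are linearly independent in $\mathcal{B}_*$, there is a unique nonzero coefficient vector $\mathbf{c} := [c_i:i\in\mathbb{N}_n]\in\mathbb{R}^n$ with $\nu = \sum_{i\in\mathbb{N}_n} c_i\nu_i$. Next, I would invoke the analogue of equation \eqref{subdifferential = norming functional} applied to $\mathcal{B}_*$: since $\nu\neq 0$, every $w_j\in \partial\|\cdot\|_{\mathcal{B}_*}(\nu)$ satisfies $\langle w_j,\nu\rangle_{\mathcal{B}_*}=\|\nu\|_{\mathcal{B}_*}$. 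Using the pre-dual identity \eqref{natural-map-predual} and the expansion of $\nu$, this becomes
$$
\mathbf{c}^\top \mathcal{L}(w_j) \;=\; \sum_{i\in\mathbb{N}_n} c_i\,\langle\nu_i,w_j\rangle_{\mathcal{B}} \;=\; \langle\nu,w_j\rangle_{\mathcal{B}} \;=\; \|\nu\|_{\mathcal{B}_*}, \quad j\in\mathbb{N}_n.
$$
Thus every vector $\mathcal{L}(w_j)\in\mathbb{R}^n$ lies on the single affine hyperplane $H:=\{\mathbf{z}\in\mathbb{R}^n:\mathbf{c}^\top\mathbf{z}=\|\nu\|_{\mathcal{B}_*}\}$.

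The conclusion we want is exactly affine dependence of the family $\{\mathcal{L}(w_j)\}_j$. I would phrase this via the augmented vectors $\tilde{\mathbf{z}}_j:=(\mathcal{L}(w_j)^\top,\,1)^\top\in\mathbb{R}^{n+1}$: producing $a_j$ with $\sum a_j\mathcal{L}(w_j)=\mathbf{0}$ and $\sum a_j=0$ is equivalent to exhibiting a nontrivial linear relation $\sum a_j\tilde{\mathbf{z}}_j=\mathbf{0}$. The affine constraint derived above translates into $(\mathbf{c}^\top,-\|\nu\|_{\mathcal{B}_*})\,\tilde{\mathbf{z}}_j=0$, so all the $\tilde{\mathbf{z}}_j$ lie in the single $n$-dimensional linear subspace of $\mathbb{R}^{n+1}$ orthogonal to the nonzero vector $(\mathbf{c},-\|\nu\|_{\mathcal{B}_*})$. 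A simple count then exceeds the dimension of this subspace, and the $\tilde{\mathbf{z}}_j$ must be linearly dependent; the coordinates of the nontrivial relation read off as the required $a_j$.

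The critical ingredient is that the hypothesis $\nu\neq 0$ is used twice: once to guarantee $\mathbf{c}\neq\mathbf{0}$ (so that the constraint $\mathbf{c}^\top\mathcal{L}(w_j)=\|\nu\|_{\mathcal{B}_*}$ is nontrivial and cuts down a genuine dimension), and once to ensure $\|\nu\|_{\mathcal{B}_*}\neq 0$ (so that the closing identity $\|\nu\|_{\mathcal{B}_*}\sum a_j=\mathbf{c}^\top\sum a_j\mathcal{L}(w_j)=0$ automatically yields $\sum a_j=0$ from $\sum a_j\mathcal{L}(w_j)=\mathbf{0}$). The only real step is the translation between the subdifferential identity in $\mathcal{B}_*$ and the affine relation in $\mathbb{R}^n$ via the pre-dual pairing; once this is in hand the result is a purely linear-algebraic dimension count.
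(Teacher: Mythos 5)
Your derivation of the common affine constraint $\mathbf{c}^\top\mathcal{L}(w_j)=\|\nu\|_{\mathcal{B}_*}$, $j\in\mathbb{N}_n$, is correct and is the same first step the paper takes. The gap is the closing dimension count. The augmented vectors $\tilde{\mathbf{z}}_j=(\mathcal{L}(w_j)^\top,1)^\top$ indeed all lie in the orthogonal complement in $\mathbb{R}^{n+1}$ of the single nonzero vector $(\mathbf{c}^\top,-\|\nu\|_{\mathcal{B}_*})^\top$, but that complement has dimension exactly $n$, and you have exactly $n$ vectors $\tilde{\mathbf{z}}_1,\dots,\tilde{\mathbf{z}}_n$; the count equals the dimension rather than exceeding it, so linear dependence of the $\tilde{\mathbf{z}}_j$ does not follow. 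Equivalently, $n$ points of $\mathbb{R}^n$ lying on an affine hyperplane that misses the origin can be linearly (indeed affinely) independent; affine dependence is only forced once at least $n+1$ points lie on that hyperplane.

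Moreover, this is not a slip that can be patched by a cleverer count: under the stated hypotheses the conclusion can genuinely fail. Take $\mathcal{B}:=\ell_1(\mathbb{N})$ with pre-dual $\mathcal{B}_*:=c_0(\mathbb{N})$, $n=2$, $\nu_1:=\mathbf{e}_1$, $\nu_2:=\mathbf{e}_2$, $\nu:=\mathbf{e}_1+\mathbf{e}_2\in\mathcal{V}\setminus\{0\}$, and $w_1:=\mathbf{e}_1$, $w_2:=\mathbf{e}_2$; by \eqref{subdifferential = norming functional} (or by \eqref{subdifferential of infinity in c0}) both $w_1,w_2\in\partial\|\cdot\|_{\infty}(\nu)$, yet $\mathcal{L}(w_1)=(1,0)^\top$ and $\mathcal{L}(w_2)=(0,1)^\top$ admit no nontrivial vanishing linear combination, so no coefficients $a_1,a_2$ as in the statement exist. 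Your augmented-vector argument does become correct verbatim if one is given $n+1$ functionals $w_j$, since then $n+1$ vectors sit in an $n$-dimensional subspace. For what it is worth, the paper's own proof follows the same route you propose — it derives the affine constraint and then asserts that the $n$ vectors $\mathcal{L}(w_j)$ lie on an $(n-1)$-dimensional hyperplane and are "thus" linearly dependent — and it founders on exactly the same step; the counterexample above applies to it as well, so the issue is with the statement's hypotheses (the number of $w_j$'s), not merely with your write-up.
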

\begin{proof}
Assume that  $\nu:=\sum_{i\in\mathbb{N}_n}c_i\nu_i$ and set $\mathbf{c}:=[c_i:i\in\mathbb{N}_n]\in\mathbb{R}^n$.  By definition \eqref{operator L} of the operator $\mathcal{L}$, we have for each $j\in\mathbb{N}_{n+1}$ that
$$
\left\langle\mathcal{L}(w_j),\mathbf{c}\right\rangle_{\mathbb{R}^n}=\left\langle \sum_{i\in\mathbb{N}_n} c_iv_i,w_j\right\rangle_{\mathcal{B}}=\left\langle \nu,w_j\right\rangle_{\mathcal{B}},
$$ 
which together with  $w_j\in\partial\|\cdot\|_{\mathcal{B}_*}(\nu)$ further leads to 
$\left\langle\mathcal{L}(w_j),\mathbf{c}\right\rangle_{\mathbb{R}^n}=\|\nu\|_{\mathcal{B}_*}.$ It is clear that $\mathcal{L}(w_j)$, $j\in\mathbb{N}_{n+1}$, as $n+1$ elements in $\mathbb{R}^n$, are linearly dependent. Hence, there exist $a_j\in\mathbb{R}$, $j\in\mathbb{N}_{n+1}$, not all zero, such that $\sum_{j\in\mathbb{N}_{n+1}}a_j\mathcal{L}(w_{j})=\mathbf{0}$. By equation $\left\langle\mathcal{L}(w_j),\mathbf{c}\right\rangle_{\mathbb{R}^n}=\|\nu\|_{\mathcal{B}_*}$, we get that 
$$
0=\left\langle\sum_{j\in\mathbb{N}_{n+1}}a_j\mathcal{L}(w_{j}),\mathbf{c}\right\rangle_{\mathbb{R}^n}=\|\nu\|_{\mathcal{B}_*}\sum_{j\in\mathbb{N}_{n+1}}a_j,
$$ 
which together with $\nu\neq 0$ results that $\sum_{j\in\mathbb{N}_{n+1}}a_j=0$.     
\end{proof}

We are ready to provide a complete proof for Proposition \ref{theorem: representer for MNI}. 
\vspace{3mm}

\begin{proof} [of Proposition \ref{theorem: representer for MNI}]
    With the help of Proposition \ref{prop: representation_solution_set}, we represent the solution set $\rm{S}(\mathbf{y})$ as in equation \eqref{represent SMNI cap 2}. It follows from equation \eqref{subdifferential = norming functional} and $\hat\nu\neq0$ that the subset $\|\hat\nu\|_{\mathcal{B}_*}\partial\|\cdot\|_{\mathcal{B}_*}(\hat\nu)$ is bounded and thus weakly$^{*}$ bounded. By the definition of subdifferential, we get that  $\|\hat\nu\|_{\mathcal{B}_*}\partial\|\cdot\|_{\mathcal{B}_*}(\hat\nu)$ is also convex and weakly${}^*$ closed. In addition, it is clear that $\mathcal{M}_{\mathbf{y}}$ is the intersection of $n$ hyperplanes. Consequently, the solution set $\rm{S}(\mathbf{y})$ may be seen as the intersection between a subset of $\mathcal{B}$, which is weakly$^{*}$ bounded, weakly$^{*}$ closed and convex, and $n$ hyperplanes. Then \cite{dubins1962extreme} ensures that any extreme point $\hat f$ of $\rm{S}({\mathbf{y}})$ has the form 
    $
    \hat f=\sum_{j\in\mathbb{N}_{n+1}} \gamma^{'}_ju^{'}_j,
    $
    with $\gamma^{'}_j\geq0$ satisfying $\sum_{j\in\mathbb{N}_{n+1}}\gamma^{'}_j=1$ and $u^{'}_j\in\mathrm{ext}\left(\|\hat\nu\|_{\mathcal{B}_*}\partial\|\cdot\|_{\mathcal{B}_*}(\hat\nu)\right)$. By setting $\gamma^{''}_j:=\|\hat\nu\|_{\mathcal{B}_*}\gamma^{'}_j$, $u_j^{''}:=u^{'}_j/\|\hat\nu\|_{\mathcal{B}_*}$, $j\in\mathbb{N}_{n+1}$, and noting that 
    $\mathrm{ext}\left(\|\hat\nu\|_{\mathcal{B}_*}\partial\|\cdot\|_{\mathcal{B}_*}(\hat\nu)\right)=\|\hat\nu\|_{\mathcal{B}_*}\mathrm{ext}\left(\partial\|\cdot\|_{\mathcal{B}_*}(\hat\nu)\right),$
    we rewrite $\hat f$ as in
    \begin{equation}\label{expansion of hat f}
        \hat f =\sum\limits_{j\in\mathbb{N}_{n+1}} \gamma^{''}_ju^{''}_j,
    \end{equation}
    with $\gamma^{''}_j\geq0$ satisfying $\sum_{j\in\mathbb{N}_{n+1}}\gamma^{''}_j=\|\hat\nu\|_{\mathcal{B}_*}$ and $u^{''}_j\in\mathrm{ext}\left(\partial\|\cdot\|_{\mathcal{B}_*}(\hat\nu)\right)$. 

    We will represent $\hat f$ with the form  \eqref{expansion of hat f} as in \eqref{eq: expansion of p}. Note that if there exists $j_0\in\mathbb{N}_{n+1}$ such that $\gamma_{j_0}''=0$, then representation \eqref{expansion of hat f} reduces to the desired result \eqref{eq: expansion of p}. Hence, it remains to consider the case that $\gamma_j''>0$ for all $j\in\mathbb{N}_{n+1}$. We will show that $u_j''$, $j\in\mathbb{N}_{n+1}$ are linearly dependent. By Lemma \ref{linear_dependent} and noting that $\hat\nu\in \mathcal{V}\backslash\{0\}$ and $u^{''}_{j}\in\partial\|\cdot\|_{\mathcal{B}_*}(\hat\nu)$, $j\in\mathbb{N}_{n+1}$, there exist $a_j\in\mathbb{R}$, $j\in\mathbb{N}_{n+1}$, not all zero, such that  
    \begin{equation}\label{linear combination of alpha j L(uj)}
        \sum\limits_{j\in\mathbb{N}_{n+1}}a_j\mathcal{L}(u_{j}^{''})=\mathbf{0}\ \, \mbox{and}\ \, \sum\limits_{j\in\mathbb{N}_{n+1}}a_j=0.
    \end{equation} 
    By setting $\alpha:=\frac{\min_{j\in\mathbb{N}_{n+1}} \gamma_{j}''}{\max_{j\in\mathbb{N}_{n+1}} |a_j|}$, we introduce two elements $f_1,f_2$ in $\mathcal{B}$ by       \begin{equation*}\label{u_epsilon}
        f_1:=\hat f+\frac{\alpha}{2} \sum\limits_{j\in\mathbb{N}_{n+1}}a_{j}u^{''}_{j},\ \  f_2:=\hat f-\frac{\alpha}{2} \sum\limits_{j\in\mathbb{N}_{n+1}}a_{j}u^{''}_{j}.
    \end{equation*}
   It follows from the first equation in \eqref{linear combination of alpha j L(uj)} and $\mathcal{L}(\hat f)=\mathbf{y}$ that  $\mathcal{L}(f_1)=\mathbf{y}$. That is, $f_1\in\mathcal{M}_{\mathbf{y}}$. Substituting equation \eqref{expansion of hat f} into the representation of $f_1$, we obtain that 
    \begin{equation}\label{expansion of u_epsilon}
        f_1=\sum\limits_{j\in\mathbb{N}_{n+1}}\left(\gamma^{''}_{j}+\frac{\alpha a_j}{2}\right)u^{''}_{j}.
    \end{equation}
    According to the definition of $\alpha$, the coefficient of each $u_j''$ appearing in \eqref{expansion of u_epsilon} is positive. Moreover, combining $\sum_{j\in\mathbb{N}_{n+1}}\gamma^{''}_j=\|\hat\nu\|_{\mathcal{B}_*}$ with the second equation in \eqref{linear combination of alpha j L(uj)}, we get that 
    \begin{equation*}
    \sum\limits_{j\in\mathbb{N}_{n+1}}\left(\gamma^{''}_{j}+\frac{\alpha a_j}{2}\right)=\sum_{j\in\mathbb{N}_{n+1}}\gamma^{''}_j+\frac{\alpha}{2}\sum\limits_{j\in\mathbb{N}_{n+1}}a_j=\|\hat\nu\|_{\mathcal{B}_*}.
    \end{equation*}
    We thus conclude that  $f_1/\|\hat\nu\|_{\mathcal{B}_*}$ is a convex combination of $u^{''}_j$, $j\in\mathbb{N}_{n+1}$.
    Recall that $u^{''}_j\in\mathrm{ext}\left(\partial\|\cdot\|_{\mathcal{B}_*}(\hat\nu)\right)$ and hence $f_1/\|\hat\nu\|_{\mathcal{B}_*}\in\partial\|\cdot\|_{\mathcal{B}_*}(\hat\nu)$, that is, 
    $f_1\in \|\hat\nu\|_{\mathcal{B}_*}\partial\|\cdot\|_{\mathcal{B}_*}(\hat\nu)$. Above all, we obtain that $f_1\in(\|\hat\nu\|_{\mathcal{B}_*}\partial\|\cdot\|_{\mathcal{B}_*}(\hat\nu))\cap\mathcal{M}_{\mathbf{y}}$, which guaranteed by Proposition \ref{prop: representation_solution_set} is equivalent to $f_1\in\rm{S}(\mathbf{y})$. By similar arguments we also get that $f_2\in\rm{S}(\mathbf{y})$. It is clear that 
    $\hat f=(f_1+f_2)/2,$ 
    which together with the assumption that $\hat f\in\mathrm{ext}\left(\rm{S}(\mathbf{y})\right)$ further
    leads to $\hat f=f_1=f_2$. Substituting the above relation into the representations of $f_1$ and $f_2$ with noting that $\alpha>0$, we have that
    \begin{equation}\label{proof: sum of ajuj''}
    \sum_{j\in\mathbb{N}_{n+1}}a_{j}u^{''}_{j}=0.     
    \end{equation}
    Since $a_j\in\mathbb{R}$, $j\in\mathbb{N}_{n+1}$ are not all zero, \eqref{proof: sum of ajuj''} implies that $u^{''}_{j}$, $j\in\mathbb{N}_{n+1},$ are linearly dependent. 

    Without loss of generality, we assume that $a_{n+1}\neq0$. It follows from equation \eqref{proof: sum of ajuj''} that 
    \begin{equation}\label{proof: un+1''}
    u_{n+1}''=-\sum_{j\in\mathbb{N}_{n}}\frac{a_j}{a_{n+1}}u^{''}_{j}.
    \end{equation}     
    By substituting relation \eqref{proof: un+1''} into \eqref{expansion of hat f}, we obtain that 
    \begin{equation}\label{proof: hat f is expansion of uj''}
        \hat f=\sum\limits_{j\in\mathbb{N}_n}\left(\gamma_j''-\frac{\gamma_{n+1}'' }{a_{n+1}}a_j\right)u_j''
    \end{equation}
    For each $j\in\mathbb{N}_n$, by letting $\gamma_j:=\gamma_j''-\frac{\gamma_{n+1}'' }{a_{n+1}}a_j$ and $u_j:=u_j''$, we can rewrite \eqref{proof: hat f is expansion of uj''} as \eqref{eq: expansion of p}. In addition, due to $\sum_{j\in\mathbb{N}_{n+1}}\gamma^{''}_j=\|\hat\nu\|_{\mathcal{B}_*}$ and the second equation of \eqref{linear combination of alpha j L(uj)}, we obtain that $\sum_{j\in\mathbb{N}_n}\gamma_j=\|\hat\nu\|_{\mathcal{B}_*}$. This completes the proof. 
\end{proof}

The next proposition shows that for any $\nu\in\mathcal{B}_*\backslash\{0\}$, the set $\mathrm{ext}\left(\partial\|\cdot\|_{\mathcal{B}_*}(\nu)\right)$ is smaller than the set of extreme points of the closed unit ball $B_0$. That is to say, Proposition \ref{theorem: representer for MNI} provides an even more precise
characterization for $u_j$, $j\in\mathbb{N}_n$, appearing in representation \eqref{eq: expansion of p}.

\begin{proposition}\label{prop: ext is smaller}
If Banach space $\mathcal{B}$ has a pre-dual space $\mathcal{B}_{*}$, then for any $\nu\in\mathcal{B}_*\backslash\{0\}$, there holds 
$\mathrm{ext}\left(\partial\|\cdot\|_{\mathcal{B}_*}(\nu)\right)\subset\mathrm{ext}\left(B_0\right).$
\end{proposition}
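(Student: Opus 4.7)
The plan is to use the characterization of the subdifferential of the norm given in \eqref{subdifferential = norming functional}, applied with $\mathcal{B}$ replaced by $\mathcal{B}_*$ (which is legitimate since $(\mathcal{B}_*)^* = \mathcal{B}$ via the natural embedding \eqref{natural-map-predual}). This yields, for any $\nu \in \mathcal{B}_* \setminus \{0\}$,
\[
\partial \|\cdot\|_{\mathcal{B}_*}(\nu) = \{f \in \mathcal{B} : \|f\|_{\mathcal{B}} = 1,\ \langle \nu, f\rangle_{\mathcal{B}_*} = \|\nu\|_{\mathcal{B}_*}\}.
\]
In particular every element of $\partial \|\cdot\|_{\mathcal{B}_*}(\nu)$ lies on the unit sphere of $\mathcal{B}$, hence in $B_0$. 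So $\partial\|\cdot\|_{\mathcal{B}_*}(\nu) \subset B_0$, which makes the inclusion of extreme points a meaningful claim.

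Next, I would take $f \in \mathrm{ext}(\partial\|\cdot\|_{\mathcal{B}_*}(\nu))$ and verify $f \in \mathrm{ext}(B_0)$ directly from the definition. Suppose $g, h \in B_0$ satisfy $f = (g+h)/2$; the goal is to show $g = h = f$. Using $\|f\|_{\mathcal{B}} = 1$ and the triangle inequality,
\[
1 = \|f\|_{\mathcal{B}} \leq \tfrac{1}{2}(\|g\|_{\mathcal{B}} + \|h\|_{\mathcal{B}}) \leq 1,
\]
so equality forces $\|g\|_{\mathcal{B}} = \|h\|_{\mathcal{B}} = 1$. Then I would test the duality identity: since $\langle \nu, f\rangle_{\mathcal{B}_*} = \|\nu\|_{\mathcal{B}_*}$ and $f = (g+h)/2$, the average $\tfrac{1}{2}(\langle\nu,g\rangle_{\mathcal{B}_*} + \langle\nu,h\rangle_{\mathcal{B}_*})$ equals $\|\nu\|_{\mathcal{B}_*}$, while each term is bounded above by $\|\nu\|_{\mathcal{B}_*}\|g\|_{\mathcal{B}} = \|\nu\|_{\mathcal{B}_*}$ and likewise for $h$. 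Consequently both $\langle\nu,g\rangle_{\mathcal{B}_*}$ and $\langle\nu,h\rangle_{\mathcal{B}_*}$ must equal $\|\nu\|_{\mathcal{B}_*}$.

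The two conditions $\|g\|_{\mathcal{B}}=1$, $\langle\nu,g\rangle_{\mathcal{B}_*}=\|\nu\|_{\mathcal{B}_*}$ (and analogously for $h$) place $g, h \in \partial\|\cdot\|_{\mathcal{B}_*}(\nu)$. Since $f$ is extreme in this set and $f=(g+h)/2$, it follows that $g=h=f$, completing the proof.

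There is no real obstacle here; the argument is a short and standard "face of the unit ball" calculation. The only point that requires care is correctly identifying the ambient dual pairing: the subdifferential $\partial\|\cdot\|_{\mathcal{B}_*}(\nu)$ a priori sits in $(\mathcal{B}_*)^*$, and one must invoke the pre-dual identification \eqref{natural-map-predual} to view it as a subset of $\mathcal{B}$ so that the comparison with $\mathrm{ext}(B_0)$ makes sense.
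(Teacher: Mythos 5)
Your proposal is correct and follows essentially the same route as the paper's own proof: both apply the norming-functional characterization \eqref{subdifferential = norming functional} in $\mathcal{B}_*$, show that any decomposition $f=(g+h)/2$ with $g,h\in B_0$ forces $\|g\|_{\mathcal{B}}=\|h\|_{\mathcal{B}}=1$ and $\langle\nu,g\rangle_{\mathcal{B}_*}=\langle\nu,h\rangle_{\mathcal{B}_*}=\|\nu\|_{\mathcal{B}_*}$, hence $g,h\in\partial\|\cdot\|_{\mathcal{B}_*}(\nu)$, and then invoke extremality of $f$ in that set. The only cosmetic difference is that the paper phrases the two equality-forcing steps as contradictions, while you argue directly from equality in the averages.
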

\begin{proof}
We assume that $\nu\in\mathcal{B}_*\backslash\{0\}$ and  $f\in\mathrm{ext}\left(\partial\|\cdot\|_{\mathcal{B}_*}(\nu)\right)$. It is sufficient to present $f\in\mathrm{ext}(B_0)$. By equation \eqref{subdifferential = norming functional}, we get that $\|f\|_{\mathcal{B}}=1$ and thus $f\in B_0$. For any $f_1,f_2\in B_0$ satisfying $f=(f_1+f_2)/2$, we shall prove that $f_1=f_2=f$. We first show that $\|f_1\|_{\mathcal{B}}=\|f_2\|_{\mathcal{B}}=1$. Otherwise, without loss of generality, we suppose that $\|f_1\|_{\mathcal{B}}<1$. Then there holds
\begin{equation*}
1=\|f\|_{\mathcal{B}}=\left\|\frac{f_1+f_2}{2}\right\|_{\mathcal{B}}\leq \frac{\|f_1\|_{\mathcal{B}}+\|f_2\|_{\mathcal{B}}}{2}<1,
\end{equation*}
which leads to a contradiction. Hence, $\|f_1\|_{\mathcal{B}}=\|f_2\|_{\mathcal{B}}=1$. We next prove that $\langle f_i,\nu\rangle_{\mathcal{B}_*}=\|\nu\|_{\mathcal{B}_*}$, for $i=1,2$. If the claim is not true, without loss of generality, we suppose that $\langle f_1,\nu\rangle_{\mathcal{B}_*}<\|\nu\|_{\mathcal{B}_*}$.  It follows that 
\begin{equation*}
    \|\nu\|_{\mathcal{B}_*}=\langle f,\nu\rangle_{\mathcal{B}_*}=\frac{\langle f_1,\nu\rangle_{\mathcal{B}_*}+\langle f_2,\nu\rangle_{\mathcal{B}_*}}{2}<\|\nu\|_{\mathcal{B}_*},
\end{equation*}
which is a contradiction as well. Thus, $\langle f_i,\nu\rangle_{\mathcal{B}_*}=\|\nu\|_{\mathcal{B}_*}$, for $i=1,2$. We then conclude by equation \eqref{subdifferential = norming functional} that $f_1,f_2\in\partial\|\cdot\|_{\mathcal{B}_*}(\nu)$. This combined with $f\in\mathrm{ext}\left(\partial\|\cdot\|_{\mathcal{B}_*}(\nu)\right)$ and the definition of extreme points leads to $f_1=f_2=f$. Again using the definition of extreme point, we obtain that $f\in\mathrm{ext}(B_0)$, which completes the proof.
\end{proof}

\section*{Appendix B. A Dual Problem of MNI in Banach Spaces}
In this appendix, we formulate a dual problem, a finite dimensional optimization problem, of the MNI problem \eqref{MNI} in a general Banach space having a pre-dual space. We then show that the element $\hat\nu\in\mathcal{V}$ appearing in Proposition \ref{theorem: representer for MNI} can be obtained by solving the resulting dual problem. Throughout this appendix, we suppose that $\mathcal{B}$ is a Banach space having the pre-dual space $\mathcal{B}_{*}$ and $\nu_{j} \in \mathcal{B}_{*}$, $j \in \mathbb{N}_{n}$, are linearly independent.

We establish the dual problem of problem \eqref{MNI} via a functional analysis approach. For this purpose, we recall the notion of the quotient space. 
If $\mathcal{M}$ is a closed subspace
of a Banach space $\mathcal{B}$, then the quotient space $\mathcal{B}/\mathcal{M}$ is defined to be the collection of cosets $f+\mathcal{M}$, for all
$f\in\mathcal{B}$. The quotient space is a Banach space when endowed with the norm
$$
\left\|f+\mathcal{M}\right\|_{\mathcal{B}/\mathcal{M}}:=\inf\left\{\|f+g\|_{\mathcal{B}}:g\in\mathcal{M}\right\}.
$$
It is known (\cite{conway2019course}) that $\mathcal{M}^*$ is isometrically isomorphic to $\mathcal{B}^*/\mathcal{M}^\bot$.

We now transform the MNI problem \eqref{MNI} into an equivalent dual
problem.

\begin{proposition}\label{prop: original problem=dual problem}
For $\mathbf{y}:=[y_j:j\in\mathbb{N}_n] \in \mathbb{R}^{n}\backslash\{\mathbf{0}\}$, let $\mathcal{M}_{\mathbf{y}}$ be defined by \eqref{hyperplane My}. 
Then 
\begin{equation}\label{eq: dual norm equi}
    \inf\left\{\|f\|_{\mathcal{B}}:f\in\mathcal{M}_{\mathbf{y}}\right\}=\sup\left\{ \sum_{j\in\mathbb{N}_n}c_jy_j:\left\|\sum_{j\in\mathbb{N}_n} c_j\nu_j\right\|_{\mathcal{B}_*}=1\right\}.
\end{equation}
\end{proposition}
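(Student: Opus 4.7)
The plan is to recast the left-hand side of \eqref{eq: dual norm equi} as a norm in a quotient Banach space and then exploit the classical duality between such a quotient and the annihilator of the corresponding subspace. First, I would set $\mathcal{N}:={}^{\perp}\mathcal{V}=\ker\mathcal{L}$, which is a norm-closed subspace of $\mathcal{B}$ of finite codimension $n$. Since $\nu_{1},\ldots,\nu_{n}$ are linearly independent, the system $\mathcal{L}(f)=\mathbf{y}$ is consistent, so I may fix any $f_{0}\in\mathcal{M}_{\mathbf{y}}$, giving $\mathcal{M}_{\mathbf{y}}=f_{0}+\mathcal{N}$. By the very definition of the quotient norm,
\[
\inf\{\|f\|_{\mathcal{B}}:f\in\mathcal{M}_{\mathbf{y}}\}=\|f_{0}+\mathcal{N}\|_{\mathcal{B}/\mathcal{N}}.
\]

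The next step is duality. I will invoke the standard isometric isomorphism $(\mathcal{B}/\mathcal{N})^{*}\cong\mathcal{N}^{\perp}\subset\mathcal{B}^{*}$, and then identify $\mathcal{N}^{\perp}$ with $\mathcal{V}$. Viewing $\mathcal{V}\subset\mathcal{B}_{*}$ as a subspace of $\mathcal{B}^{*}$ through the isometric embedding $\mathcal{B}_{*}\hookrightarrow\mathcal{B}^{*}$, finite-dimensionality guarantees that $\mathcal{V}$ is both norm-closed and weakly${}^{*}$ closed in $\mathcal{B}^{*}$. The bipolar identity from Proposition 2.6.6 of \cite{megginson2012introduction} then yields $\mathcal{N}^{\perp}=({}^{\perp}\mathcal{V})^{\perp}=\overline{\mathcal{V}}^{w^{*}}=\mathcal{V}$. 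Moreover, because $\mathcal{B}_{*}\hookrightarrow\mathcal{B}^{*}$ is isometric, the $\mathcal{B}^{*}$-norm restricted to $\mathcal{V}$ coincides with $\|\cdot\|_{\mathcal{B}_{*}}$, so $(\mathcal{B}/\mathcal{N})^{*}$ becomes isometrically isomorphic to $(\mathcal{V},\|\cdot\|_{\mathcal{B}_{*}})$, with the duality pairing $\phi([f])=\langle\nu,f\rangle_{\mathcal{B}}$ for $\phi\leftrightarrow\nu$.

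In the final step, the dual characterization of the quotient norm transforms the LHS into
\[
\sup\bigl\{\,|\langle\nu,f_{0}\rangle_{\mathcal{B}}|:\nu\in\mathcal{V},\ \|\nu\|_{\mathcal{B}_{*}}\leq 1\bigr\}.
\]
Writing $\nu=\sum_{j\in\mathbb{N}_{n}}c_{j}\nu_{j}$ and using $\langle\nu_{j},f_{0}\rangle_{\mathcal{B}}=y_{j}$, this reduces to $\sup\{|\sum_{j\in\mathbb{N}_{n}}c_{j}y_{j}|:\|\sum_{j\in\mathbb{N}_{n}}c_{j}\nu_{j}\|_{\mathcal{B}_{*}}\leq 1\}$. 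The absolute value is removed by the symmetry $(c_{j})\mapsto(-c_{j})$, and since $\mathbf{y}\neq\mathbf{0}$ (pick $c_{j_{0}}=\mathrm{sign}(y_{j_{0}})/\|\nu_{j_{0}}\|_{\mathcal{B}_{*}}$ for any $j_{0}$ with $y_{j_{0}}\neq 0$) the supremum is strictly positive, so by positive homogeneity it is already attained as a limit on the sphere $\|\sum_{j}c_{j}\nu_{j}\|_{\mathcal{B}_{*}}=1$, yielding the RHS. The main obstacle is the isometric identification of $(\mathcal{B}/\mathcal{N})^{*}$ with $\mathcal{V}$ in the correct $\mathcal{B}_{*}$-norm rather than merely the $\mathcal{B}^{*}$-norm: this is precisely where the pre-dual hypothesis does the work, via the isometry of $\mathcal{B}_{*}\hookrightarrow\mathcal{B}^{*}$ combined with the automatic weak${}^{*}$ closedness of finite-dimensional subspaces in a dual space.
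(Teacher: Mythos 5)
Your proof is correct, and its skeleton is the same as the paper's: both reduce the left-hand side of \eqref{eq: dual norm equi} to the quotient norm of $f_0+\ker\mathcal{L}$ in $\mathcal{B}/\ker\mathcal{L}$ and then dualize. The difference lies in which duality you invoke. The paper applies the isometry $\mathcal{M}^*\cong\mathcal{X}^*/\mathcal{M}^{\perp}$ with $\mathcal{X}:=\mathcal{B}_*$ and $\mathcal{M}:=\mathcal{V}$, so that $\mathcal{B}/\mathcal{M}_0=(\mathcal{B}_*)^*/\mathcal{V}^{\perp}\cong\mathcal{V}^*$ and the norm of $f_0+\mathcal{M}_0$ is, by the very definition of the norm on $\mathcal{V}^*$, the supremum over the unit sphere of $(\mathcal{V},\|\cdot\|_{\mathcal{B}_*})$; no bipolar argument or separate Hahn--Banach step is needed, since the pre-dual structure delivers the correct norm at once. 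You instead use the mirror identification $(\mathcal{B}/\mathcal{N})^*\cong\mathcal{N}^{\perp}$, which forces you to supply two additional ingredients: the bipolar identity $({}^{\perp}\mathcal{V})^{\perp}=\overline{\mathcal{V}}^{w^*}=\mathcal{V}$ (valid because a finite-dimensional subspace of $\mathcal{B}^*$ is weakly${}^*$ closed) and the Hahn--Banach formula expressing $\|[f_0]\|_{\mathcal{B}/\mathcal{N}}$ as a supremum over the dual unit ball, together with the remark that the canonical embedding $\mathcal{B}_*\hookrightarrow\mathcal{B}^*$ is isometric so that the $\mathcal{B}^*$-norm on $\mathcal{V}$ is the $\mathcal{B}_*$-norm. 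Your handling of the last step (passing from the closed unit ball to the unit sphere and dropping the absolute value, using $\mathbf{y}\neq\mathbf{0}$ and homogeneity) is also sound and is slightly more careful than the paper, which states the sphere form directly. In short: same decomposition, a different but equally standard duality lemma; the paper's choice yields a shorter argument, yours makes explicit where weak${}^*$ closedness of $\mathcal{V}$ and the isometry of the pre-dual embedding enter.
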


\begin{proof}
By setting $\mathcal{M}_0$ to be $\mathcal{M}_{\mathbf{y}}$ with $\mathbf{y}=\mathbf{0}$, we represent $\mathcal{M}_{\mathbf{y}}$ as a translation of $\mathcal{M}_0$, that is,  $\mathcal{M}_\mathbf{y}:=\mathcal{M}_0+f_0$ for some $f_0\in\mathcal{M}_{\mathbf{y}}$. Then the MNI problem \eqref{MNI} may be rewritten as 
$$
\inf\left\{\|f\|_{\mathcal{B}}:f\in\mathcal{M}_{\mathbf{y}}\right\}=\inf\left\{\|f_0+g\|_{\mathcal{B}}:g\in\mathcal{M}_{0}\right\},
$$
which further leads to 
\begin{equation}\label{proof: dual equi 1}
   \inf\left\{\|f\|_{\mathcal{B}}:f\in\mathcal{M}_{\mathbf{y}}\right\}=\left\|f_0+\mathcal{M}_0\right\|_{{\mathcal{B}}/\mathcal{M}_0}.
\end{equation}
By the isometric isomorphism between $(\mathcal{B}_*)^*/\mathcal{V}^\bot$ and $\mathcal{V}^*$, with noting that  $\mathcal{B}=(\mathcal{B}_*)^*$ and $\mathcal{M}_0=\mathcal{V}^\bot$, we have that 
\begin{equation}\label{proof: dual equi 2}
    \left\|f_0+\mathcal{M}_0\right\|_{{\mathcal{B}}/\mathcal{M}_0}=\sup\left\{\left\langle \sum_{j\in\mathbb{N}_n}c_j\nu_j, f_0\right\rangle_{\mathcal{B}}: \left\|\sum_{j\in\mathbb{N}_n} c_j \nu_j\right\|_{\mathcal{B}_*}=1\right\}.
\end{equation}
Substituting $\langle\nu_j, f_0\rangle_{\mathcal{B}}=y_j$, $j\in\mathbb{N}_n$, into the right-hand-side of equation \eqref{proof: dual equi 2}, we get that 
$$
\left\|f_0+\mathcal{M}_0\right\|_{{\mathcal{B}}/\mathcal{M}_0}=\sup\left\{ \sum_{j\in\mathbb{N}_n}c_jy_j:\left\|\sum_{j\in\mathbb{N}_n} c_j\nu_j\right\|_{\mathcal{B}_*}=1\right\}.
$$
Again substituting the above equation  into \eqref{proof: dual equi 1}, we get the desired equation \eqref{eq: dual norm equi}. 
\end{proof}

As a finite dimensional optimization problem, the dual problem
\begin{equation}\label{dual problem}
    \sup\left\{ \sum_{j\in\mathbb{N}_n}c_jy_j:\left\|\sum_{j\in\mathbb{N}_n} c_j\nu_j\right\|_{\mathcal{B}_*}=1\right\},
\end{equation}
shares the same optimal value with the MNI problem \eqref{MNI}. We remark that such a dual problem  was considered in \cite{cheng2021minimum} for the MNI problem with $\mathcal{B}:=\ell_1(\mathbb{N})$ and in \cite{ChengWangXu2023} for a class of regularization problems.  

The next result concerns how to obtain an element $\hat\nu\in\mathcal{V}$ satisfying \eqref{Non-empty-set} once we have a solution of the dual problem \eqref{dual problem} at hand. 

\begin{proposition}\label{prop: solution of dual problem gives solution of original problem}
For $\mathbf{y}:=[y_j:j\in\mathbb{N}_n] \in \mathbb{R}^{n}\backslash\{\mathbf{0}\}$, let $\mathcal{M}_{\mathbf{y}}$ be defined by \eqref{hyperplane My}. If $m_0$ is the infimum of the MNI problem \eqref{MNI} with $\mathbf{y}$ and  $\hat{\mathbf{c}}:=[\hat{c}_1,\hat{c}_2,\ldots,\hat{c}_n]\in\mathbb{R}^n$ is a solution of the dual problem \eqref{dual problem} with $\mathbf{y}$, then $\hat{\nu}:=m_0\sum_{j\in\mathbb{N}_n}\hat{c}_j\nu_j$ satisfies \eqref{Non-empty-set}.
\end{proposition}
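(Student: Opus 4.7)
The plan is to produce an explicit $\hat f$ lying in the intersection $(\|\hat\nu\|_{\mathcal{B}_*}\partial\|\cdot\|_{\mathcal{B}_*}(\hat\nu))\cap\mathcal{M}_{\mathbf{y}}$, namely any minimizer of the MNI problem \eqref{MNI} itself. By Lemma~\ref{lemma: solution set of MNI}, the solution set $\mathrm{S}(\mathbf{y})$ is nonempty, so we can fix $\hat f\in\mathrm{S}(\mathbf{y})$. By definition, $\hat f\in\mathcal{M}_{\mathbf{y}}$ and $\|\hat f\|_{\mathcal{B}}=m_0$. It therefore remains only to verify that $\hat f\in\|\hat\nu\|_{\mathcal{B}_*}\partial\|\cdot\|_{\mathcal{B}_*}(\hat\nu)$.

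First I would compute $\|\hat\nu\|_{\mathcal{B}_*}$. Since $\hat{\mathbf{c}}$ is admissible for the dual problem \eqref{dual problem}, we have $\|\sum_{j\in\mathbb{N}_n}\hat c_j\nu_j\|_{\mathcal{B}_*}=1$, and so $\|\hat\nu\|_{\mathcal{B}_*}=m_0$. In particular $\hat\nu\neq 0$ because $\mathbf{y}\neq\mathbf{0}$ forces $m_0>0$ (using e.g.\ Proposition~\ref{prop: original problem=dual problem} and the fact that the supremum in \eqref{dual problem} is at least $|\sum_j\hat c_jy_j|>0$ for some sign choice of $\hat{\mathbf{c}}$ derived from a separation argument; more simply, $\mathcal{M}_{\mathbf{y}}\not\ni 0$).

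Next I would invoke characterization \eqref{subdifferential = norming functional} with $\mathcal{B}$ replaced by $\mathcal{B}_*$: an element $g\in\mathcal{B}=(\mathcal{B}_*)^*$ lies in $\partial\|\cdot\|_{\mathcal{B}_*}(\hat\nu)$ if and only if $\|g\|_{\mathcal{B}}=1$ and $\langle\hat\nu,g\rangle_{\mathcal{B}}=\|\hat\nu\|_{\mathcal{B}_*}$. Set $g:=\hat f/m_0$; then $\|g\|_{\mathcal{B}}=1$ by the previous paragraph. For the pairing, expanding $\hat\nu$ and using $\hat f\in\mathcal{M}_{\mathbf{y}}$ yields
\begin{equation*}
\langle\hat\nu,\hat f\rangle_{\mathcal{B}}
= m_0\sum_{j\in\mathbb{N}_n}\hat c_j\langle\nu_j,\hat f\rangle_{\mathcal{B}}
= m_0\sum_{j\in\mathbb{N}_n}\hat c_j y_j.
\end{equation*}
By Proposition~\ref{prop: original problem=dual problem}, the supremum in the dual problem \eqref{dual problem} equals $m_0$, and since $\hat{\mathbf{c}}$ attains it we have $\sum_{j}\hat c_jy_j=m_0$. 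Therefore $\langle\hat\nu,\hat f\rangle_{\mathcal{B}}=m_0^2$, whence $\langle\hat\nu,g\rangle_{\mathcal{B}}=m_0=\|\hat\nu\|_{\mathcal{B}_*}$. Thus $g\in\partial\|\cdot\|_{\mathcal{B}_*}(\hat\nu)$, which means $\hat f=m_0 g\in\|\hat\nu\|_{\mathcal{B}_*}\partial\|\cdot\|_{\mathcal{B}_*}(\hat\nu)$, and combined with $\hat f\in\mathcal{M}_{\mathbf{y}}$ this establishes \eqref{Non-empty-set}.

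There is no serious obstacle here; the proof is essentially bookkeeping in the duality pairing between $\mathcal{B}$ and $\mathcal{B}_*$ (using \eqref{natural-map-predual} to switch sides when needed), together with the two facts already secured: the norm of $\hat\nu$ is $m_0$ from the dual constraint, and the supremum value $\sum_j\hat c_jy_j=m_0$ matches the MNI infimum via Proposition~\ref{prop: original problem=dual problem}. The only mild point to keep straight is that the subdifferential of $\|\cdot\|_{\mathcal{B}_*}$ at $\hat\nu\in\mathcal{B}_*$ lives in $(\mathcal{B}_*)^*=\mathcal{B}$, so it is natural that a candidate representer $\hat f$ sits in $\mathcal{B}$, matching the setting of \eqref{Non-empty-set}.
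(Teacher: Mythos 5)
Your proof is correct and follows essentially the same route as the paper's: fix a minimizer $\hat f\in\mathrm{S}(\mathbf{y})$, note $\|\hat\nu\|_{\mathcal{B}_*}=m_0=\|\hat f\|_{\mathcal{B}}$ from the dual constraint, and use the equality of optimal values from Proposition \ref{prop: original problem=dual problem} together with \eqref{subdifferential = norming functional} (applied in $\mathcal{B}_*$) to conclude $\hat f/m_0\in\partial\|\cdot\|_{\mathcal{B}_*}(\hat\nu)$. The only cosmetic difference is that you write the pairing as $\langle\hat\nu,\hat f\rangle_{\mathcal{B}}$ while the paper writes $\langle\hat f,\hat\nu\rangle_{\mathcal{B}_*}$, which coincide by \eqref{natural-map-predual}, and you make the positivity $m_0>0$ explicit where the paper leaves it implicit.
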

\begin{proof}
Noting that $\rm{S}(\mathbf{y})$ is nonempty, we choose $\hat f\in\rm{S}(\mathbf{y})$ and proceed to prove that $\hat f\in\|\hat{\nu}\|_{\mathcal{B}_*}\partial\|\cdot\|_{\mathcal{B}_*}(\hat{\nu})\cap\mathcal{M}_{\mathbf{y}}$. Clearly, $\hat f\in\mathcal{M}_{\mathbf{y}}$. It suffices to show that $\hat f\in\|\hat{\nu}\|_{\mathcal{B}_*}\partial\|\cdot\|_{\mathcal{B}_*}(\hat{\nu})$. Since $\hat{\mathbf{c}}$ is a solution of problem \eqref{dual problem}, we get that $    \|\sum_{j\in\mathbb{N}_n} \hat{c}_j\nu_j\|_{\mathcal{B}_*}=1
    $. This together with the definition of $\hat{\nu}$ leads to $\|\hat{\nu}\|_{\mathcal{B}_*}=m_0$. By noting that $\|\hat f\|_{\mathcal{B}}=m_0$, we obtain that $\left\|\frac{\hat{f}}{\|\hat{\nu}\|_{\mathcal{B}_*}}\right\|_{\mathcal{B}}=1$. It follows that     
$$
\left\langle\frac{\hat{f}}{\|\hat{\nu}\|_{\mathcal{B}_*}},\hat{\nu}\right\rangle_{\mathcal{B}_*}=\sum_{j\in\mathbb{N}_n} \hat{c}_j\langle\hat{f},\nu_j\rangle_{\mathcal{B}_*}.
$$
Substituting $\langle\hat{f},\nu_j\rangle_{\mathcal{B}_*}=y_j$, $j\in\mathbb{N}_n$, into the above equation, we have that 
$$
\left\langle\frac{\hat{f}}{\|\hat{\nu}\|_{\mathcal{B}_*}},\hat{\nu}\right\rangle_{\mathcal{B}_*}=\sum_{j\in\mathbb{N}_n}\hat{c}_j y_j.
$$ 
Noting that $\hat{\mathbf{c}}$ is a solution of problem \eqref{dual problem}, the above equation,   guaranteed by Proposition \ref{prop: original problem=dual problem}, yields that   $\langle\hat{f}/\|\hat{\nu}\|_{\mathcal{B}_*},\hat{\nu}\rangle_{\mathcal{B}_*}=m_0$. This together with $\|\hat{\nu}\|_{\mathcal{B}_*}=m_0$ leads directly to $\langle\hat{f}/\|\hat{\nu}\|_{\mathcal{B}_*},\hat{\nu}\rangle_{\mathcal{B}_*}=\|\hat{\nu}\|_{\mathcal{B}_*}$. Consequently, we conclude that $\hat{f}/\|\hat{\nu}\|_{\mathcal{B}_*}\in\partial\|\cdot\|_{\mathcal{B}_*}(\hat{\nu})$, that is, $\hat f\in\|\hat{\nu}\|_{\mathcal{B}_*}\partial\|\cdot\|_{\mathcal{B}_*}(\hat{\nu}).$ This completes the proof of this proposition.
\end{proof}

Proposition \ref{prop: solution of dual problem gives solution of original problem} ensures
that the element $\hat\nu\in\mathcal{V}$ appearing in Proposition \ref{theorem: representer for MNI} can be obtained by solving the dual problem \eqref{dual problem}.

\vskip 0.2in
\bibliography{ref}

\end{document}